\newlist{compactitem}{itemize}{3}
\setlist[compactitem]{topsep=0pt,partopsep=0pt,itemsep=0pt,parsep=0pt}
\setlist[compactitem,1]{label=\textbullet}
\setlist[compactitem,2]{label=---}
\setlist[compactitem,3]{label=*}
\newlist{compactdesc}{description}{3}
\setlist[compactdesc]{topsep=0pt,partopsep=0pt,itemsep=0pt,parsep=0pt}
\newlist{compactenum}{enumerate}{3}
\setlist[compactenum]{topsep=0pt,partopsep=0pt,itemsep=0pt,parsep=0pt}
\setlist[compactenum,1]{label=\arabic*}
\setlist[compactenum,2]{label=\alph*}
\setlist[compactenum,3]{label=\roman*}
\newtheorem{thm}{Theorem}
\newtheorem{cor}{Corollary}
\newtheorem{lem}{Lemma}
\newtheorem{asm}{Assumption}
\newtheorem{fact}{Fact}
\newtheorem{prop}{Proposition}
\newtheorem{defn}{Definition}
\newtheorem{rem}{Remark}
\newcommand{\yk}[1]{{\color{red}[YK: #1]}}
\newcommand{\edit}[1]{{\color{teal} #1}}
\newcommand{\yka}[1]{}
\newcommand{\sida}[1]{}
\newcommand{\pqa}[1]{}
\newcommand{\pqd}[1]{}
\newcommand{\prob}{{\mathbb{P}}}
\newcommand{\eps}{{\epsilon}}
\newcommand{\Ex}{{\mathbb{E}}}
\newcommand{\N}{K} 
\newcommand{\Iem}{I_{\textup{\tiny em}}}
\newcommand\footnoteref[1]{\protected@xdef\@thefnmark{\ref{#1}}\@footnotemark}
\newcommand{\X}{\mathbf{X}} 
\newcommand{\e}{\mathbf{e}} 
\newcommand{\V}{L} 
\newcommand{\T}{\top} 
\newcommand{\bA}{\mathbf{A}}
\newcommand{\bD}{\mathbf{D}}
\newcommand{\bE}{\mathbf{E}}
\newcommand{\bof}{\mathbf{f}}
\newcommand{\bv}{\mathbf{v}}
\newcommand{\bx}{\mathbf{x}}
\newcommand{\bX}{\mathbf{X}}
\newcommand{\by}{\mathbf{y}}
\newcommand{\bz}{\mathbf{z}}
\newcommand{\bbp}{\mathbb{P}}
\newcommand{\fA}{\bar{\mathbf{A}}} 
\newcommand{\fE}{\bar{\mathbf{E}}} 
\newcommand{\fX}{\bar{\mathbf{X}}} 
\newcommand{\fKX}{\bar{\mathbf{X}}^{\N}} 
\newcommand{\fKA}{\bar{\mathbf{A}}^{\N}} 
\newcommand{\PsiU}{\Psi^{U}} 
\newcommand{\PsiKU}{\Psi^{K,U}} 
\newcommand{\cJ}{\mathcal{J}}
\newcommand{\cT}{\mathcal{T}}
\newcommand{\cU}{\mathcal{U}}
\newcommand{\ind}{{\mathbb{I}}}
\newcommand{\w}{\alpha} 
\newcommand{\alphatwo}{w}
\newcommand{\bzero}{\mathbf{0}}
\newcommand{\balpha}{\boldsymbol{\alpha}}
\newcommand{\gammaopt}{{\bar{\gamma}}}
\newcommand{\blambda}{\boldsymbol{\lambda}}
\newcommand{\bmu}{\boldsymbol{\mu}}
\newcommand{\bphi}{\boldsymbol{\phi}}
\newsavebox\myboxA
\newsavebox\myboxB
\newlength\mylenA
\newcommand*\xoverline[2][0.82]{%
	\sbox{\myboxA}{$\m@th#2$}%
	\setbox\myboxB\null
	\ht\myboxB=\ht\myboxA%
	\dp\myboxB=\dp\myboxA%
	\wd\myboxB=#1\wd\myboxA
	\sbox\myboxB{$\m@th\overline{\copy\myboxB}$}
	\setlength\mylenA{\the\wd\myboxA}
	\addtolength\mylenA{-\the\wd\myboxB}%
	\ifdim\wd\myboxB<\wd\myboxA%
	\rlap{\hskip 0.5\mylenA\usebox\myboxB}{\usebox\myboxA}%
	\else
	\hskip -0.5\mylenA\rlap{\usebox\myboxA}{\hskip 0.5\mylenA\usebox\myboxB}%
	\fi}
\newtheorem*{theorem*}{Theorem}
\pgfplotsset{
	compat=1.8,
	tick label style = {font=\scriptsize},
	every axis label = {font=\scriptsize},
	legend style = {font=\scriptsize},
	label style = {font=\scriptsize},
	/pgfplots/xlabel near ticks/.style={
		/pgfplots/every axis x label/.style={
			at={(ticklabel cs:0.5)},anchor=near ticklabel
		}
	},
	/pgfplots/ylabel near ticks/.style={
		
		/pgfplots/every axis y label/.style={
			at={(ticklabel cs:0.5)},rotate=90,anchor=near ticklabel}
	}
	
}
\def\expandafter\normalsize\expandafter{%
	\normalsize
	\setlength\abovedisplayskip{5pt}
	\setlength\belowdisplayskip{5pt}
	\setlength\abovedisplayshortskip{4pt}
	\setlength\belowdisplayshortskip{4pt}
}
\definecolor{RED}{rgb}{1,0,0}\definecolor{BLUE}{rgb}{0,0,1} 
\begin{document}

\vspace{-80pt}
\title{
Large Deviations Optimal Scheduling of Closed Queueing Networks
}
\author{
    Siddhartha Banerjee\thanks{
		School of ORIE, Cornell University. Email: \texttt{sbanerjee@cornell.edu}}
\and Yash Kanoria\thanks{
		Graduate School of Business, Columbia University. Email: \texttt{ykanoria@columbia.edu}}
\and Pengyu Qian\thanks{
Krannert School of Management, Purdue University. Email: \texttt{qianp@purdue.edu}}
}
\date{}	


\maketitle

\vspace{-20pt}
\begin{abstract}
\singlespacing
\vspace{-10pt}
We study the design of dynamic scheduling controls in closed queueing networks with a fixed number of jobs. Each time a server becomes available, the controller has (limited) flexibility in choosing the buffer from which to serve a job. If no jobs are available at any compatible buffer, the server idles. If the job is served, it relocates to a ``destination'' buffer. We study how to maximize throughput in steady state via a large deviations analysis.

We propose a family of simple state-dependent policies called Scaled MaxWeight (SMW) policies that dynamically manage the distribution of jobs in the network. We prove that under a complete resource pooling condition (analogous to the condition in Hall's marriage theorem), any SMW policy leads to exponential decay of throughput-loss probability as the number of jobs scales to infinity. Further, there is an SMW policy that achieves the \textbf{optimal} loss exponent among all scheduling policies, and we analytically specify this policy in terms of the service rates and routing probabilities. The optimal SMW policy maintains high job levels adjacent to structurally under-supplied servers.
Our methodology also applies to the open network setting and leads to exponent-optimal policies.

\medskip

\noindent{\bf Keywords:} network; queueing; control; 
scheduling; maximum weight policy; large deviations; Lyapunov function.
\end{abstract}


\section{Introduction}\label{sec:intro}

The study of the control of closed queueing networks has a long history \citep[see, e.g.,][]{harrison1990scheduling}, and has seen a recent resurgence of activity motivated by applications such as shared transportation systems \citep[see, e.g.,][]{waserhole2016pricing,banerjee2016pricing,braverman2016empty}. The hallmark of such systems is that serving a job causes it to relocate. The system operator makes tactical control decisions with the aim of maximizing longer-term system performance, which necessitates that the operator \emph{manage the distribution of the jobs} to avoid server starvation throughout the network. In this paper, we focus on \emph{dynamic scheduling} of a closed queueing network given \emph{limited flexibility}, i.e., when a server becomes available, which compatible (e.g., nearby) buffer should it serve?  
A central challenge in such systems is that of distributional mismatch between the supply and demand of jobs: to keep a server busy, there has to be an available job at a compatible buffer when the server becomes available. There are two sources of distributional supply-demand asymmetry: \emph{structural imbalance} (some buffers may have a tendency to have a systematic net inflow, or outflow, of jobs) and \emph{stochasticity}.
Many previous works have studied control decisions made in a \emph{state-independent} manner where structural imbalance is handled by solving the fluid limit problem which arises as the number of jobs $K$ is taken to $\infty$. However, this approach fails to react to stochasticity leading to optimality gap (fraction of demand lost) which shrinks to zero only (slowly) as $1/K$ \citep{banerjee2016pricing} as $K$ grows if service rates and routing probabilities are \emph{exactly} known, and non-vanishing optimality gap as $K \to \infty$ if these parameters are not perfectly known (see Proposition~\ref{prop:state_ind_no_exp} in Section~\ref{subsec:neg_result_state_independent}).

In this paper we propose simple and practical \textit{state-dependent} scheduling policies which automatically handle both structural imbalance and stochasticity, and come with strong performance guarantees. We focus on system parameters satisfying the Complete Resource Pooling condition, which ensures that in the absence of stochasticity (i.e., in the fluid limit), all the servers are always busy. The control problem remains non-trivial: all state-independent policies provide unsatisfactory performance as summarized above. 
We provide a very simple ``maximum weight'' (MaxWeight) control policy which does not use system parameters and achieves optimality gap (loss) which decays \emph{exponentially} in $K$. This result motivates the large deviations question: \emph{Which policy maximizes the loss exponent?} We propose a natural family of Scaled MaxWeight (SMW) policies generalizing MaxWeight, and show that all SMW policies achieve exponentially small loss. We then prove the surprising result that there is always an SMW policy which is \emph{exponent-optimal} among all dynamic scheduling policies, and characterize how the parameters of the optimal SMW policy are determined by the system parameters.

{\bf Informal description of our model.}
In our model, the system consists of a network with two sets of nodes, namely, buffers and servers. A fixed number of jobs circulate among the buffers. We use the service token setup to model the service and routing process \citep[see, e.g.,][]{tsitsiklis2013power}: Service tokens arrive stochastically at servers with buffer destinations, at some time-invariant rates.
For each server, a subset of the buffers are \emph{compatible} with it, and the system dynamically decides from which compatible buffer to serve a job using the incoming service token. Thus, compatibilities capture the \emph{limited flexibility} available to the system.
After a job is scheduled to a service token, it relocates to the destination of the service token. Jobs do not enter or leave the system.
The system's goal is to waste as few service tokens as possible in steady state, i.e., to maximize throughput.

The controller's challenge is that of managing the distribution of the $K$ jobs to ensure the continued availability of jobs throughout the network. To obtain tight characterizations, we consider the asymptotic regime where the number of jobs in the system $K$ goes to infinity, and perform a large deviations analysis.

\subsection{Main contributions}
We show that a simple and practical MaxWeight scheduling policy effectively manages the distribution of jobs in the network, leading to a fraction of throughput lost that decays exponentially fast in $K$. Each time a service token arrives, MaxWeight simply uses it to serve a job from the compatible buffer which currently has the largest number of jobs. In particular, MaxWeight requires no knowledge of service token arrival rates. 

This finding motivates a thorough \emph{large deviations analysis} which yields surprisingly elegant results. As a function of system primitives, we derive a large deviations rate-optimal scheduling policy that minimizes throughput-loss. Our optimal policy is a close cousin of MaxWeight and its parameters depend in a natural way on service token arrival rates. Our contribution is threefold:
\begin{compactenum}[label=\arabic*.,leftmargin=*]
	\item {\bf A family of simple policies.}
	We propose a family of state-dependent scheduling policies called Scaled MaxWeight (SMW) policies, and prove that all of them guarantee exponential decay of throughput-loss probability under the CRP condition. An SMW policy is parameterized by a vector of scaling factors, one for each buffer; each service token serves a job from the compatible buffer with the largest scaled number of jobs. SMW policies are simple, explicit and promising for practical applications.
	\item {\bf The value of (intelligent) state-dependent control.} We show (Proposition \ref{prop:state_ind_no_exp}) that no state-independent scheduling policy can achieve loss which decays exponentially in $K$, and that if service token arrival rates are not perfectly known, then  the loss of a state-independent policy (generically) does not vanish as $K \to \infty$. 	Our SMW policies provide vastly superior performance: even the naive unscaled (``vanilla'') MaxWeight scheduling policy requiring no knowledge of service token arrival rates achieves loss which decays exponentially in $K$.
	\item {\bf Exponent-optimal policy and qualitative insights.}
	For general network structures, we obtain an explicit specification for the optimal scaling factors for SMW based on compatibilities and service token arrival rates. Further, we obtain the surprising finding that the optimal SMW policy is, in fact, {\em exponent-optimal} among all state-dependent policies (Theorem \ref{thm:main_tight}). A key ingredient of this result is that SMW policies satisfy the \textit{critical subset} property: for each SMW policy, there is a corresponding (fluid) equilibrium state, and for this state there are ``critical'' subsets of servers that are most vulnerable to the depletion of jobs in compatible buffers. Each SMW policy simultaneously ``protects'' all critical subsets maximally by maintaining high job levels near structurally under-supplied servers. 
\end{compactenum}
\medskip

{\bf Technical contributions.} To the best of our knowledge, we are the first to perform a large deviations analysis under CRP, leading to the challenging problem of deriving an exponent optimal control.
One key difficulty in the mathematical analysis is the necessity to deal with a multi-dimensional system even in the limit. Usually CRP renders the control problem ``easy'' because it leads to the ``collapse'' of the system state to a lower dimensional space in the heavy traffic limit, as in many existing works that establish the asymptotic optimality of a certain policy in minimizing the workload of a queueing system. In contrast, in our setting, the limit system remains $m$-dimensional, where $m$ is the number of buffers.
A second key challenge we face is that the ideal state for the system is a priori unknown, making it unclear how to define a Lyapunov function. We overcome these difficulties via a novel approach. We construct a  \emph{policy-specific} Lyapunov function to facilitate a sharp large deviations analysis of a given SMW policy leveraging the machinery of \cite{venkataramanan2013queue}. The analysis applies to general network structures, and reveals that the SMW policy maximally protects all the ``critical subsets'' of servers. We deduce the existence of an exponent optimal SMW policy, and characterize its scaling factors in terms of service token arrival rates. Happily, the fluid equilibrium for this optimal policy is revealed as the ideal state.

In addition to closed networks, our technical machinery can also be applied to the open network setting. To demonstrate the versatility of our methodology, in Section \ref{sec:open_network} we study an open queueing network with limited flexibility and finite buffer space, where the system operator needs to (i) choose buffer sizes beforehand subject to an upper bound on the sum of buffer sizes, and (ii) make dynamic scheduling decisions, in order to minimize the probability of buffer overflow.
Using a similar analysis as in the closed network setting, we design
an exponent-optimal policy.

\subsection{Applications}\label{subsec:intro-application}
Our main model and analysis can serve as a building block towards studying various applications. 

\textbf{Shared transportation systems.}
Shared transportation platforms such as those for ride-hailing and bikesharing make assignment control decisions under limited flexibility to manage the distribution of supply. In these applications, the nodes in our model correspond to geographical locations,\footnote{The set of buffers and servers are replicas of each other in these applications.} while jobs and service tokens correspond to vehicles and customers, respectively. The assignment control in ride-hailing takes the form of dispatch, i.e., the platform can decide where (near the demand's origin) to dispatch a car from. Bikesharing platforms can execute assignment control by suggesting to the customer where (near the customer's origin or destination) to pick up (or drop off) a bike.\footnote{For example, the Bike Angels program of CitiBike implicitly makes these suggestions to members by awarding ``points for taking bikes from crowded stations and bringing them to empty ones or stations expected to soon become empty''. Notice the resemblance to a MaxWeight approach. A live map of point awards is shown to customers.} 

\textbf{Quality-of-service with finite buffers.}
An important metric in queueing networks is quality-of-service (QoS). When queueing space is limited, QoS can be measured by the buffer overflow probability.
In many applications such as make-to-order production systems \citep{shi2015process}, the total buffer size of multiple queues is bounded by a physical constraint (e.g., the size of the warehouse), but it is a priori unclear how to divide the buffer space among different queues to maximize QoS.
We present an exponent optimal buffer sizing and dynamic scheduling policy for this problem in Section \ref{sec:open_network}.

\subsection{Literature review}
\textbf{MaxWeight scheduling.}
MaxWeight is a simple scheduling policy in constrained queueing networks which (roughly speaking) chooses the feasible control decision that serves the queues with largest total weight (e.g. queue length, waiting time, etc.), at each time.
MaxWeight scheduling has been shown to exhibit good performance in various settings (see, e.g., \citealt{tassiulas1992stability,dai2005maximum,stolyar2004maxweight,eryilmaz2012asymptotically,maguluri2016heavy}), including by \cite{shi2015process} who study an open one-hop network version of our setting.
In contrast, we find that MaxWeight achieves a suboptimal exponent in our closed network setting.

\textbf{Large deviations in queueing systems.}
There is a large literature on characterizing the probability of building up long queues in \emph{open} queueing networks, including controlled \citep[see, e.g.,][]{stolyar2001largest,stolyar2003control} and uncontrolled \citep[see, e.g.,][]{majewski2008large,blanchet2013optimal} networks.
The work closest to ours is that of \cite{venkataramanan2013queue}, who established the relationship between Lyapunov functions and buffer overflow probability for open queueing networks.
The key difficulty in extending the Lyapunov approach to closed queueing networks is the lack of a natural reference state where the Lyapunov function equals to $0$ (in an open queueing network the reference state is simply $\mathbf{0}$).
It turns out that as we optimize the MaxWeight parameters we are also solving for the best reference state.

\textbf{Applications: shared transportation systems.}
\cite{ozkan2016dynamic} studied revenue-maximizing state-independent assignment control by solving a minimum cost flow problem in the fluid limit.
\cite{braverman2016empty} modeled the system by a closed queueing network and derived the optimal static routing policy that sends empty vehicles to under-supplied locations.
\cite{banerjee2016pricing} adopted the Gordon-Newell closed queueing network model and considered static pricing/repositioning/matching policies that maximizes throughput, welfare or revenue.
In contrast to our work, which studies state-dependent control, these works consider static control that completely relies on system parameters.
In terms of convergence rate to the fluid-based solution, \cite{ozkan2016dynamic} did not study the convergence rate of their policy, \cite{braverman2016empty} observed from simulation an $O(1/\sqrt{\N})$ convergence rate as the number of supply units in the closed system $\N$ goes to infinity,\footnote{In the setting of \cite{braverman2016empty}, the loss probability can remain positive even as $K$ grows, in contrast with our setting where the loss probability can always be sent to $0$ because of our CRP condition under which the flows in the network can potentially be balanced.
The comparison of convergence rates is most meaningful if we restrict attention to instances in their setting where the loss probability goes to zero as $K$ grows.}
while \cite{banerjee2016pricing} showed finite system bounds with an $O(1/{\N})$ convergence rate as $\N\to\infty$ in the absence of service times and an $O(1/\sqrt{\N})$ convergence rate with service times. All these works propose static policies, and we show that no static policy can achieve exponentially small loss.
In contrast, under the CRP condition, we obtain exponentially small loss in $K$, and further obtain the optimal exponent. 

Our approach of studying control while initially ignoring travel delays is mirrored in several papers in this literature, starting with \cite{waserhole2016pricing}. The main model in \cite{banerjee2016pricing} ignores delays, and the paper subsequently shows that all its findings are robust to that assumption. Similarly, subsequent to the present paper, \cite{balseiro2019dynamic} study the control of (large) networks of circulating resources by ignoring travel delays and then show robustness of their results to delays. 

\textbf{Online stochastic bipartite matching.}
There is a related stream of research on online stochastic bipartite matching, see, e.g., \cite{caldentey2009fcfs,adan2012loss,buyic2015approximate,mairesse2016stability}.
Different types of supplies and demands arrive over time, and the system manager matches supplies with demands of compatible types using a specific matching policy, and then discharges the matched pairs from the system.
Our work is different in that we study a \emph{closed} system where supply units never enter or leave the system. 

\textbf{Other related work.} \cite{jordan1995principles,desir2016sparse,shi2015process} and others study how process flexibility can facilitate improved performance, analogous to our use of dispatch control to improve demand fulfillment.
Along similar lines, network revenue management is a classical dynamic resource allocation problem, see, e.g., \citet{gallego1994optimal,talluri2006theory}, and recent works, e.g., \citet{jasin2012re,bumpensanti2018re}.
Different types of demands arrive over time, and a centralized decision is made at each arrival.
Again, each of these settings is ``open'' in that each service token or supply unit can be used only once, in contrast to our closed setting.

\subsection{Organization of the paper}
The remainder of our paper is organized as follows.
In Section \ref{sec:model} we introduce the basic notation and formally describe our baseline model together with the performance metric.
In Section \ref{sec:smw} we introduce the family of Scaled MaxWeight policies.
In Section \ref{sec:main_results} we present our main theoretical result, i.e., that there is an exponent optimal SMW policy for any set of primitives satisfying our main assumption.
In Section \ref{sec:smw_analysis} we prove the exponent optimality of SMW policies.
In Section \ref{sec:open_network} we apply our technical machinery to an open network setting.
We conclude in Section \ref{sec:conclu}.

\noindent\textbf{Notation.} We use $\e_i$ to denote the $i$-th unit vector, and $\mathbf{1}$ the all-$1$ vector.
The dimensions of the vectors will be clear from the context.
For a finite index set $A$, define $\mathbf{1}_{A}\triangleq\sum_{i\in A}\e_i$.
For a set $\Omega$ in Euclidean space $\mathbb{R}^n$, denote its relative interior by $\textup{relint}(\Omega)$.
For event $C$, we define the indicator random variable $\ind\{C\}$\yka{Better to use  $\mathbb{I}$  instead, to clearly distinguish from $\mathbf{1}$.} to equal $1$ when $C$ is true, else $0$.
All vectors are column vectors if not specified otherwise.

\section{The Model and Preliminaries}\label{sec:model}

\subsection{Basic setting}
We study the dynamic scheduling problem in closed queueing networks. We consider an infinite-horizon continuous-time model, with a fixed number $K$ of identical \emph{jobs} that circulate in the network. Formally, we consider a sequence of systems indexed by $K \in \mathbb{Z}_+$.

\textbf{The compatibility graph.} The compatibility structure for scheduling is described by a bipartite \emph{compatibility graph} $G= (V_B\cup V_S,E)$, where the $K$ jobs are distributed over the \emph{buffers} $V_B$, and the set of \emph{servers} is $V_S$.
We add a prime symbol to the indices of servers (in $V_S$) to distinguish between the two.
Let $m\triangleq |V_B|$ and $n\triangleq |V_S|\in \mathbb{Z}_+$ be the number of buffers and servers, respectively.
Each edge $(i,j')\in E$ represents a compatible pair of buffer and server, i.e., a job currently stationed at $i \in V_B$ can be served by server $j' \in V_S$. See Figure~\ref{fig:bipartite_matching} for an illustration. We denote the neighborhood of a buffer $i\in V_B$ (resp. server $j'\in V_S$) in $G$ as $\partial(i)\subseteq V_S$ (resp. $\partial(j') \subseteq V_B$); thus, for a buffer $i$, its compatible servers are given by $\partial(i)=\{j'\in V_S|(i,j')\in E\}$, and similarly for each server.
Moreover, for any set of buffers $A\subseteq V_B$, we also use $\partial(A)$ to denote its server neighborhood (and vice versa).
\begin{figure}[!t]
	\centering
	\includegraphics[width=0.45\columnwidth]{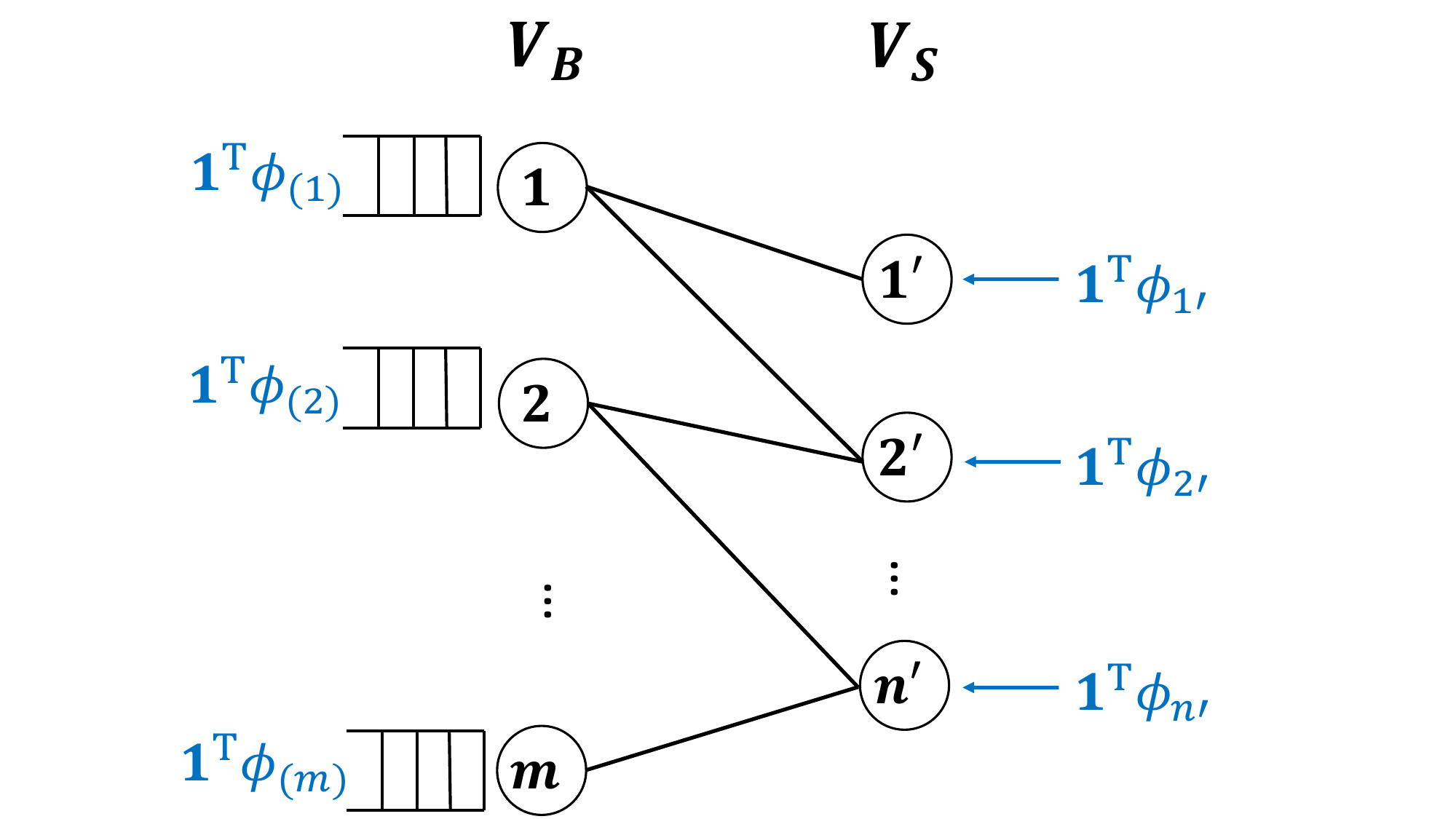}
	\caption{The bipartite (scheduling) compatibility graph:
	On the left are buffers $i \in V_B$, and on the right are servers $j' \in V_S$. The edges entering a server $j'$ encode compatible buffers that can be served by $j'$.
	The (normalized) serviced rate of server $j'$ is $\mathbf{1}^{\T}\phi_{j'}$.\yka{In the picture on the right, the prime signs are not appearing properly in the subscript $\mathbf{1}^{\T}\hat{\phi}_{i'}$.} Assuming no server is idling, the (normalized) rate of arrival of jobs to $i$ is $\mathbf{1}^{\T}\phi_{(i)}$.}\label{fig:bipartite_matching}
\end{figure}

\textbf{Service tokens.} We use the service token setup to model the service process, see, e.g., \cite{tsitsiklis2013power}.
We denote the type of a service token as $(j',k)\in V_S\times V_B$, where $j'$ is its \emph{origin} server and $k$ is its \emph{destination} buffer.
In the $K$-th system, service tokens of each type $(j',k)$ arrive sequentially following independent Poisson processes with rates $K\phi_{j'k}$.
Without loss of generality, we assume that $\mathbf{1}^{\T} \bphi \mathbf{1}=1$.
We call $\bphi\in\mathbb{R}^{n\times m}$ the matrix of service token type probabilities.
We denote the $k$-th column of $\bphi$ as $\phi_{(k)}$, and the transpose of the $j'$-th row of $\bphi$ as $\phi_{j'}$.
Thus, the (normalized) service rate at server $j'$ is $\mathbf{1}^{\T}\phi_{j'}$, and, assuming all service tokens are utilized, the rate of jobs arriving at queue $k$ is $\mathbf{1}^{\T}\phi_{(k)}$.
We assume all servers have positive service rate $\mathbf{1}^{\T} \phi_{j'}$.

We use the term \emph{network} to refer to a given set of primitives: a compatibility graph $G$ and service token type distribution matrix $\bphi$. We make two mild assumptions on the network.

\begin{asm}[Connectedness]
\label{asm:connectivity}
A network $(G,\bphi)$ is \emph{connected} if for every ordered pair of distinct buffers $(k_0,i) \in V_B\times V_B$, $k_0 \neq i$, there is a finite sequence of service token types $(j_1',k_1),\cdots,(j_{\ell}',k_{\ell}=i)$ such that $\phi_{j_r' k_r}>0$ for all $r=1,\cdots,\ell$, and $k_{r-1} \in \partial(j_{r}')$ for all $r=1,\cdots,\ell$.
\end{asm}

Assumption \ref{asm:connectivity} requires that for every pair of buffers, there is a sequence of service token types with positive arrival rates and corresponding compatible buffers that would take a job from one buffer eventually to the other buffer.

We now observe that if the compatibility graph affords ample flexibility, specifically, if the destination for every service token type belongs to the compatible neighborhood of the origin, then the control problem is trivial.
\begin{prop}[Ample flexibility renders the control problem trivial]
\label{prop:NT-is-necessary}
Consider any network $(G, \bphi)$ which satisfies Assumption~\ref{asm:connectivity} and such that for all  $j' \in V_S$, $k \in V_B$ such  $\phi_{j'k}>0$ it holds that $k \in \partial(j')$.
Then for any $K \geq n \triangleq |V_S|$, there is a control policy which wastes an identically zero fraction of service tokens in the long run. Formally, there is a policy $U$ such that $\mathbb{P}^{\N,U} = 0$, for $\mathbb{P}^{\N,U}$ defined in \eqref{eq:lb_performance_measure} below.
\end{prop}
The reason is simple: we can ``reserve'' a job for each server $j' \in V_S$, and each reserved job will never leave the corresponding neighborhood $\partial(j')$, ensuring that no service token is ever wasted. The proof of Proposition~\ref{prop:NT-is-necessary} is in Appendix \ref{appen:sec:crp}.
Proposition~\ref{prop:NT-is-necessary} motivates the following assumption to ensure that the flexibility available is sufficiently limited that the scheduling control problem at hand is non-trivial.
\begin{asm}[Limited flexibility]
\label{asm:non_trivial}
A network $(G, \bphi)$ has \emph{limited flexibility} if there exists a server-buffer pair $j' \in V_S$ and $k \in V_B$ such that $k \notin \partial(j')$ and $\phi_{j'k}>0$, i.e., the destination $k$ for these service tokens is not a buffer compatible with their origin $j'$.
\end{asm}

\textbf{System state.}  For the $K$-th system, its state at any time is given by $\X^{K} $, an $m$-dimensional vector that tracks the number of jobs at each buffer. The state space of the $K$-th system is thus given by
$
\Omega_{K}\triangleq
\big \{\mathbf{x}\in \{0, 1, 2, \dots\}^m \, \big | \, \mathbf{1}^{\T} \mathbf{x}=\N\big \}.
$
Note that the normalized state $\frac{1}{K}\X^{K}$ lies in the $m$-probability simplex $\Omega = \{\mathbf{x}\in\mathbb{R}^m|\mathbf{x}\geq 0,\mathbf{1}^{\T} \mathbf{x}=1\}$.  We use $\X^{\N}(0)$ to denote the initial state.

\subsection{Optimal scheduling control}
Given the above setting, the problem we want to study is how to design scheduling policies which maximize the system throughput.
For fixed $\N$, this problem can be formulated as an average cost Markov decision process (MDP) on a finite (albeit, very large) state space, and is thus known to admit a stationary optimal policy (i.e., where the scheduling rule at any time only depends on the current system state $\X^{K}$; see Proposition 5.1.3 in \citealt{bertsekas1995dynamic}). Furthermore, the MDP satisfies the accessibility condition because of Assumption \ref{asm:connectivity}, therefore the optimal cost is independent of the initial state (see \citealt{bertsekas1995dynamic}).

\textbf{Scheduling policies.}
Upon the arrival of a service token of type $(j',k)$, the system must immediately serve a job from a compatible buffer of $j'$;
subsequently, the job relocates to the destination buffer $k$.
If no job is available at any compatible buffer of $j'$, then the service token is wasted.
Let $\mathcal{U}^{K}$ be the set of stationary policies for the $\N$-th system.
A scheduling policy $U\in\mathcal{U}$ consists of, for each $j'\in V_S$, $k\in V_B$, a sequence of mappings $\big(U^{K}\in\mathcal{U}^{K}\big)_{\N=1}^\infty$,  which map the current queue-length vector $\X^{K}$ and service token type $(j',k)$ to $U^{K}[\X^{K}](j',k)\in \partial(j')\cup\{\emptyset\}$.
Here $U^{K}[\X^{K}](j',k)=i$ means given the current state $\X^{K}$, we serve a job from $i \in \partial(j')$ with server $j'$ and the job relocates to $k$, and $U^{K}[\X^{K}](j',k)=\emptyset$ means that the system does not serve jobs with type $(j',k)$ service tokens and hence any such service token is wasted.
When $\X^{K}_i=0$ for all $i\in\partial(j')$, this forces $U^{K}[\X^{K}](j',k)=\emptyset$ since there is no job at buffers compatible to $j'$.
For simplicity of notation, we refer to the policies by $U$ instead of $U^{K}$.

\textbf{System evolution.} Let $t_{r}$ be the $r$-th service token arrival epoch after time $0$.
Denote the state of the system just before $t_r$ by $\X^{K}(t_r^{-})$ (the initial state is $\X^{\N}(0)$); note that this incorporates the state change due to serving with the $(r-1)$-th service token arrival for $r>1$.
Now suppose the system uses a scheduling policy $U$, and the $r$-th service token arrival has origin server $o[r]$ with destination buffer $d[r]$ (sampled from service token type distribution $\bphi$).
Let $S[r] \triangleq U^{K}[\X^{K}(t_r^-)](o[r],d[r])$ be the chosen buffer (potentially $\emptyset$). Then, formally, the system state updates as per
\begin{equation*}
\X^{\N}(t_r)
	\triangleq
\left\{
\begin{array}{ll}
\X^{\N}(t_r^-) - \e_{S[r]} + \e_{d[r]}&\text{if $S[r]\in V_B\, ,$}\\
\X^{\N}(t_r^-)&\text{if $S[r]=\emptyset\, .$}
\end{array}
\right.
\end{equation*}

\textbf{Performance measure.}
The platform's goal is to find a scheduling policy that maximizes system throughput in steady state, which is equivalent to minimizing the \emph{long-run average probability of wasting service tokens}.

Formally, for $U\in\mathcal{U}$ we define
\begin{equation}\label{eq:lb_performance_measure}
{\bbp}^{\N,U}
\triangleq
\lim_{T\to\infty}\frac{1}{T}\ 
\mathbb{E}\left(
\sum_{r=1}^{T}
\ind\left\{U^{\N}[\X^{\N,U}(t_r^{-})](o[r],d[r])= \emptyset\right\}
\right)\, ,
\end{equation}

The limit exists because the MDP has a finite state space. The exact value of \eqref{eq:lb_performance_measure} for fixed $\N$ is challenging to study. To this end, the main performance measure of interest in this work is the decay rate of ${\bbp}^{\N,U}$ as $\N\to\infty$:
\begin{equation}\label{eq:lb_measure_rate}
\gamma(U) \triangleq -\limsup_{\N\to\infty}\frac{1}{\N}\log {\bbp}^{\N,U}\, ,
\end{equation}
For brevity, we henceforth refer to it as the {\em throughput-loss exponent}. 

\subsection{The Complete Resource Pooling (CRP) condition} 
\label{subsec:CRP}
We now make a few additional definitions to allow us to state our main assumption. 
We say that a subset of servers $J \subsetneq V_S$ has \emph{limited flexibility} if there is some server $j' \in J$ and buffer $k \notin \partial(J)$ such that $\phi_{j' k } > 0$. (Informally, there is a service token type which requires jobs to leave the neighborhood of $J$.)
We denote the set of limited-flexibility subsets by $\mathcal{J}$.
Assumption~\ref{asm:non_trivial} guarantees that there is at least one non-trivial singleton $J$ and hence that $\mathcal{J}\neq\emptyset$.
Observe that $J$ has limited flexibility if and only if 
\begin{align}
\label{eq:net-demand}
\mu_J \triangleq \sum_{j'\in J}\sum_{k\notin \partial(J)}\phi_{j'k}>0 \, .
\end{align}
We call $\mu_J$ the \emph{net service rate} of $J$, since it captures the probability that a service token arrival has origin in $J$ and destination outside $\partial(J)$ (and hence requires a job to leave $\partial(J)$). Similarly, we define the (optimistic) \emph{net arrival rate} to $J$ as
\begin{align}
    \lambda_J \triangleq \sum_{j'\notin J}\sum_{k\in \partial(J)}\phi_{j'k} \, .
    \label{eq:net-supply}
\end{align}
Informally, $\lambda_J$ is the probability that a service token arrival is such that it can (depending on the scheduling decision) cause a job to enter $\partial(J)$.

The following is the main assumption of this paper.

\begin{asm}[Complete Resource Pooling]
\label{asm:strict_hall}
We assume that for all subsets of servers $J$ with limited flexibility (i.e., $J\subsetneq V_S$ with positive net service rate $\mu_J > 0$) we have that $\lambda_J > \mu_J$, where the net arrival rate $\lambda_J$ was defined in \eqref{eq:net-supply}, and the net service rate $\mu_J$ was defined in \eqref{eq:net-demand}. 
\end{asm}

The intuition behind this assumption is simple: it assumes the system is ``balanceable'' in that for each subset $J\subsetneq V_S$ of servers, jobs arrive sufficiently fast at neighboring buffers to utilize all the service tokens arriving to $J$, on average.
The control problem under CRP is non-trivial: In Section~\ref{subsec:neg_result_state_independent} we will show that all state-independent policies perform inadequately.

We show that Assumption~\ref{asm:strict_hall} is necessary in order to obtain exponentially small loss.

\begin{prop}\label{prop:hall_is_necessary}
For any $(G,\bphi)$ such that Assumption \ref{asm:strict_hall} is violated, it holds that for any policy $U$, the throughput-loss probability does not decay exponentially, i.e., $\gamma(U) = 0$ where $\gamma(U)$ is defined in \eqref{eq:lb_measure_rate}.
\end{prop}

In other words, if Assumption \ref{asm:strict_hall} is violated, this means the system has significant distributional imbalance between the demand for service and the capacity of the system, and throughput loss is unavoidable. The proof of Proposition~\ref{prop:hall_is_necessary} is in Appendix \ref{appen:sec:crp}. 

\subsection{Sample path large deviation principle}\label{subsec:sample_path_ldp}

Our main theoretical result is the culmination of a sharp \emph{large deviations} analysis, characterizing the best possible throughput-loss exponent. We provide a brief introduction to classical large deviations theory in this subsection.

For each fixed $\N\in\mathbb{Z}_{+}$ and $T \in (0, \infty)$, define a {\em scaled sample path} of accumulated service token arrivals $\fA^{\N}(\cdot)\in (L^{\infty}[0,T])^{n\times m}$\yka{changed $n^2$ to $nm$} as follows.\footnote{Here $L^{\infty}[0,T]$ denotes the space of bounded functions on $[0,T]$ equipped with the supremum norm.}\yka{Reviewer 2 (the technical and good one) writes ``define properly what $L^\infty[0,T]$ means''.}
Let $\{A_{j'k}^K(\cdot)\}_{j'\in V_S,k\in V_B}$ be independent Poisson processes where $A_{j'k}^K(\cdot)$ has rate $K{\phi}_{j'k}$.
Let
\begin{equation}
	\fA^{K}_{j'k}(t)
	\triangleq
	\frac{1}{\N}\bA_{j'k}^K(t)\qquad \forall t\in [0,T] \, .
\label{eq:demand-sample-path}
\end{equation}
Let $\mu_{\N}$ be the law of $\fA^{\N}(\cdot)$ in $(L^{\infty}[0,T])^{n\times m}$.
For all $\bof \in \mathbb{R}_+^{n\times m}$, let
\begin{equation}\label{eq:kl_divergence}
	\Lambda^*(\bof)
	\triangleq
	\left\{
	\begin{array}{ll}
	\sum_{j'\in V_S} \sum_{k\in V_B} \left(f_{j'k} \log\frac{f_{j'k}}{{\phi}_{j'k}}
	-f_{j'k}
	+
	{\phi}_{j'k}
	\right)
	& \text{if } \bof > \bzero\, ,\\
	\infty & \text{otherwise\,.}
	\end{array}
	\right.
\end{equation}
For any set $\Gamma$, let $\bar{\Gamma}$ denote its closure, and $\Gamma^o$ denote its interior.
Below is the sample path large deviation principle (also known as Mogulskii's Theorem, see \citealt{dembo1998large}):
\begin{fact}\label{fact:sample_path_ldp}
	For measures $\{\mu_{\N}\}$ defined above, and any arbitrary measurable set $\Gamma \subseteq (L^{\infty}[0,T])^{n\times m}$, we have
	\begin{equation}\label{eq:ldp}
	-\inf_{\fA\in \Gamma^o}I_T(\fA)
	\leq
		\liminf_{\N\to\infty}\frac{1}{\N}\log \mu_{\N}(\Gamma)
		\leq
		\limsup_{\N\to\infty}\frac{1}{\N}\log \mu_{\N}(\Gamma)
		\leq
		-\inf_{\fA\in \bar{\Gamma}}I_T(\fA)\, ,
	\end{equation}
	where the rate function\footnote{Since absolutely continuous functions are differentiable almost everywhere, the rate function is well-defined.} is:
	\begin{equation}\label{eq:ld_rate}
		I_T(\fA) \triangleq \left\{
		\begin{array}{ll}
		\int_{0}^{T}\Lambda^*\left(\dot{\fA}(t)\right)dt &
		\text{ if }\fA(\cdot)\in \textup{AC}[0,T],\ \fA(0)=\mathbf{0}\, ,\\
		\infty &\text{ otherwise\,.}
		\end{array}
		\right.
	\end{equation}
	Here $\textup{AC}[0,T]$ is the space of absolutely continuous functions on $[0,T]$, and $\dot{\fA}(t)$ is the derivative of $\fA$ at time $t$ when the derivative exists.
\end{fact}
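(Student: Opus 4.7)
The plan is to follow the standard route for Mogulskii-type sample-path LDPs: (i) a Cram\'er LDP at a single time; (ii) an extension to the joint law on a finite time grid using independence of increments; (iii) identification of the integral rate function via the Dawson--G\"artner projective-limit theorem; and (iv) an exponential-tightness argument upgrading the topology from pointwise-on-a-grid convergence to sup-norm convergence in $L^{\infty}[0,T]$.

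For step~(i), write $\bX_\tau \in \{0,1\}^{n\times n}$ for the indicator matrix with a single $1$ in entry $(o[\tau],d[\tau])$, so $\bA^{\N}[t'] = \sum_{\tau=1}^{t'} \bX_\tau$ is an i.i.d.\ sum of bounded random matrices with mean $\phi$. The cumulant generating function $\Lambda(\lambda) = \log \sum_{j',k} \phi_{j'k} e^{\lambda_{j'k}}$ is finite everywhere, so Cram\'er's theorem in $\mathbb{R}^{n\times n}$ yields an LDP for $\fA^{\N}[1]$ with good rate function $\Lambda^*$; a direct Legendre--Fenchel computation produces the KL form in \eqref{eq:kl_divergence}, where the constraints $\bof \geq 0$ and $\mathbf{1}^T \bof \mathbf{1} = 1$ arise because $\bX_1$ is supported on rank-one $0/1$ matrices $\e_{j'} \e_k^T$. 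For step~(ii), fix a partition $0 = t_0 < t_1 < \cdots < t_m = T$. The increments $\fA^{\N}[t_i] - \fA^{\N}[t_{i-1}]$ are independent sums of $\lceil \N t_i\rceil - \lceil \N t_{i-1}\rceil \approx \N(t_i - t_{i-1})$ i.i.d.\ copies of $\bX_1/\N$, so Cram\'er applied to each increment together with independence yields a joint LDP on $(\mathbb{R}^{n\times n})^m$ with rate
\begin{equation*}
J_m(\ba_1, \ldots, \ba_m) \,=\, \sum_{i=1}^{m}(t_i - t_{i-1})\,\Lambda^*\!\left(\frac{\ba_i - \ba_{i-1}}{t_i - t_{i-1}}\right), \qquad \ba_0 \triangleq \mathbf{0}.
\end{equation*}

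For step~(iii), the Dawson--G\"artner projective-limit theorem lifts this family of finite-dimensional LDPs to an LDP on path space equipped with the topology of pointwise convergence on a countable dense set of times, with rate $\tilde I_T(\fA) = \sup_m J_m$ optimized over refining partitions. Convexity and lower semicontinuity of $\Lambda^*$ give the identification $\tilde I_T(\fA) = I_T(\fA)$ on $\textup{AC}[0,T]$ with $\fA(0)=\mathbf{0}$: Jensen's inequality yields $\tilde I_T \leq I_T$ direction, while approximating $\dot{\fA}$ in $L^1$ by step functions on a fine mesh gives the matching $\tilde I_T \geq I_T$ direction. Superlinear growth of $\Lambda^*$ at infinity forces any $\fA$ with $\tilde I_T(\fA) < \infty$ to be absolutely continuous, so $\tilde I_T = +\infty$ off $\textup{AC}[0,T]$, as required.

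Finally, for step~(iv), I upgrade the topology to $L^{\infty}[0,T]$ via exponential tightness. Since each coordinate $\N\fA^{\N}_{j'k}$ is a monotone sum of i.i.d.\ Bernoulli$(\phi_{j'k})$ random variables, an Etemadi-style maximal inequality combined with a Chernoff bound controls $\sup_{t\in[0,T]}|\fA^{\N}_{j'k}[t] - \phi_{j'k} t|$ at the exponential scale $\N$, yielding exponential tightness in the uniform topology; combining this with the projective-limit LDP produces the sandwich~\eqref{eq:ldp} with rate function $I_T$. I expect the main technical obstacle to be precisely this topology upgrade: a pointwise Chernoff bound at each grid time is insufficient, and one needs a genuine maximal inequality to rule out large between-grid excursions at the exponential scale. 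A secondary but delicate point is verifying that the refined-partition supremum $\tilde I_T$ equals the integral $\int_0^T \Lambda^*(\dot{\fA})\,dt$ on every $\fA\in\textup{AC}[0,T]$ --- convergence of the Riemann-type sums $J_m$ along refining meshes is standard for convex lower semicontinuous integrands but deserves explicit justification.
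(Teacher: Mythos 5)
The paper does not prove this statement at all: it is quoted verbatim as Mogulskii's theorem with a citation to Dembo and Zeitouni (1998), and your four-step sketch (Cram\'er at a single time, independent increments on a grid, Dawson--G\"artner plus identification of the integral rate function, and a topology upgrade via exponential tightness) is precisely the standard proof of that cited theorem, so your approach coincides with the paper's. One simplification worth noting: in this model exactly one arrival occurs per period, so every scaled sample path is deterministically $1$-Lipschitz (in the $\ell^1$ matrix norm) and starts at $\mathbf{0}$; hence the support of every $\mu_{\N}$ lies in a single Arzel\`a--Ascoli compact set of $(L^{\infty}[0,T])^{n^2}$, and your step~(iv) --- which you flag as the main obstacle --- is actually immediate here, with no maximal inequality needed. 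Likewise, since $\Lambda^*$ is $+\infty$ off the compact probability simplex, finiteness of the projective-limit rate function directly forces the path to be Lipschitz, which is stronger than the absolute continuity you derive from superlinear growth.
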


Informally, this fact says the following. (Suppose the leftmost term and rightmost term in \eqref{eq:ldp} are equal.) The probability exponent (with respect to $K$) for the event $\Gamma$ is equal to the exponent for the most likely fluid sample path (a limit of scaled sample paths, see Section~\ref{subsec:FSP_FL}) of demand $\fA$ such that the event occurs. The exponent for $\fA$ is the \emph{time integral of the exponent for its time derivative}, {and the latter is given by the function \eqref{eq:kl_divergence} where the summand is the large deviations exponent of a (sequence of) Poisson random variable(s) with mean ${\phi}_{j'k}$}. 

In the present work, the relevant $\Gamma$ will be the throughput loss event. The reason the sample paths of accumulated service token arrivals fully determine whether this event occurs is because given any deterministic policy (as the policies we propose will be), the arrival process $\bA(\cdot)$ and the initial configuration $\bX(0)$ uniquely determine the evolution of the system state $\bX(\cdot)$, and hence determine throughput loss. The key will be to understand the most likely sample paths of the arrival process which lead to throughput loss. Our converse (impossibility) bound on the exponent will be established by constructing a fluid sample path of service token arrivals that \emph{always} leads to throughput loss regardless of the policy.

\section{Scaled MaxWeight Policies}\label{sec:smw}

The traditional MaxWeight policy is a celebrated approach to scheduling which has been effectively deployed in many applications such as cloud computing, communication networks, traffic management, etc., \citep[see, e.g.,][]{tassiulas1992stability,maguluri2012stochastic}. MaxWeight (hereafter referred to as vanilla MaxWeight) allocates the service capacity to the queue(s) with largest ``weight'' (where weight can be any relevant parameter such as queue length, waiting time, etc.). 
In our setting, vanilla MaxWeight would correspond to serving the compatible buffer with longest queue (with appropriate tie-breaking rules).

Besides its simplicity, one reason for the popularity of MaxWeight is that it is known to be asymptotically optimal in many problem settings (e.g., see \citealt{stolyar2003control,stolyar2004maxweight,shi2015process,maguluri2016heavy}).
In our setting too, we will find that vanilla MaxWeight is asymptotically optimal. In fact, we will show that it achieves an exponentially small loss. However, we will find that, in general, vanilla MaxWeight does not achieve the largest possible loss exponent.
Suboptimality of the exponent prompts us to consider alternate control policies.

We generalize vanilla MaxWeight by attaching a positive scaling parameter $\w_i$ to each buffer $i\in V_B$, and serve the compatible buffer with largest \textit{scaled} queue length $\X_i/\w_i$.
Without loss of generality, we normalize $\balpha$ s.t. 	$\mathbf{1}^{\T}\balpha=1$, or equivalently, $\balpha\in\text{relint}(\Omega).$
We call this family of policies \textit{Scaled MaxWeight (\textup{SMW}) policies}, and use SMW($\balpha$) to denote SMW with parameter $\balpha$.

The formal definition of SMW is as follows.
\begin{defn}[Scaled MaxWeight SMW($\balpha$)]
Fix $\balpha \in \textup{relint}(\Omega)$, i.e., $\balpha \in \mathbb{R}^m$ such that $\alpha_i > 0 \ \forall i \in V_B$ and $\sum_{i \in V_B} \alpha_i =1$.	Given system state $\X(t_r^{-})$ just before the $r$-th service token arrival and for service tokens arriving at server $j'$, \textup{SMW}($\balpha$) serves
	\begin{equation*}
		\textup{argmax}_{i\in \partial(j')}\frac{\X_i(t_r^{-})}{\alpha_i}
	\end{equation*}
	if $\max_{i\in \partial(j')}\frac{\X_i(t_r^{-})}{\alpha_i}>0$; otherwise the service token is wasted.
	(If there are ties when determining the argmax, it serves the buffer with highest index.\footnote{Our analysis and results are unchanged if any other deterministic tie-breaking rule is employed instead.})
\end{defn}

As may be expected, SMW policies tend to equalize the scaled queue lengths if CRP holds. The following fact is formalized later in Proposition~\ref{prop:smw-resting-state} in Section~\ref{sec:smw_analysis}.
\begin{rem}[Resting state under \textup{SMW}($\balpha$)]
\label{rem:SMW-converges-to-w}
If Assumptions~\ref{asm:connectivity}, \ref{asm:non_trivial} and \ref{asm:strict_hall} hold then for any $\balpha \in \textup{relint}(\Omega)$, the SMW$(\balpha)$ policy has a ``resting state'' $\balpha$: 
Specifically, consider using SMW$(\balpha)$ on a sequence of systems indexed by the number of jobs $K$. Then there exists $T_0 = T_0(\alpha) >0$ which does not depend on $K$, such that for any $T>T_0$,
\begin{align*}
    \limsup_{K\to\infty}\left(
    \max_{\bX^{K}(0)\in\Omega_{K}}
    \big \lVert \tfrac{1}{K} \bX^{K, \balpha}(T) - \balpha \big \rVert_2
    \right)
    =\ 0\quad \textup{almost surely}\, ,
\end{align*}
where $\bX^{K, \balpha}(T)$ is the state of the $K$-th system at time $T$. 
\end{rem}

\section{Main Result}\label{sec:main_results}

In this section we present our main result, which says that for any network such that CRP holds: (i) All Scaled Maxweight (SMW) policies yield exponential decay of throughput loss in the number of jobs $K$, with an exponent which we explicitly specify. (ii) For scaling parameter vector $\balpha$ which maximizes the exponent among SMW policies, the SMW$(\balpha)$ policy is exponent optimal among all possible policies. 
In sharp contrast, we show in Section~\ref{subsec:neg_result_state_independent} that 
that no state-independent scheduling policy can achieve loss which decays exponentially in $K$, and moreover that if service token arrival rates are not perfectly known, then  the loss of a state-independent policy (generically) does not vanish as $K \to \infty$. 

Recall from Section~\ref{subsec:CRP} the set of subsets of servers with limited flexibility
\begin{equation}\label{eq:set_drainable}
	\mathcal{J} = 
	\left\{
	J\subsetneq V_S:
	\sum_{j'\in J}\sum_{k\notin \partial(J)}\phi_{j'k}>0
	\right\}.
\end{equation}
The following is our main result.

\begin{thm}[\bf Main Result]\label{thm:main_tight}
For any network $(G, \bphi)$ satisfying Assumptions~\ref{asm:connectivity}, \ref{asm:non_trivial} and \ref{asm:strict_hall}, we have:
\begin{compactenum}[label=\arabic*.,leftmargin=*]
		\item \textbf{Exponentially small loss under any SMW policy}: For any $\balpha\in\textup{relint}(\Omega)$, \textup{SMW($\balpha$)} achieves exponential decay of the throughput-loss probability with exponent\,
		\footnote{Note that the argument of the logarithm has a strictly larger numerator than denominator for every $J \subsetneq V_S$ since Assumption \ref{asm:strict_hall} holds, implying that $\gamma(\balpha)$ is the minimum of finitely many positive numbers, and hence is positive.}
\begin{align}
\gamma(\balpha)
=
\min_{J\in\mathcal{J}} B_J
\log\left( \frac{\lambda_J}{\mu_J} \right)
>0\, ,
\label{eq:gamma_w}
		\end{align}
\begin{align*}
	\textup{where}\quad
	B_J\triangleq \mathbf{1}_{\partial(J)}^{\textup{T}}\balpha\, ,\qquad
	\lambda_J \triangleq \sum_{j'\notin J}\sum_{k\in \partial(J)}\phi_{j'k}\, ,\quad \textup{and} \quad
	\mu_J \triangleq \sum_{j'\in J}\sum_{k\notin \partial(J)}\phi_{j'k}\, .
\end{align*}
		\item \textbf{There is an exponent optimal SMW policy}: Under \emph{any} policy $U$, it must be that
\begin{equation}
\gamma(U) \leq \gammaopt \, ,
\qquad
\textup{where}\quad\gammaopt = \sup_{\balpha\in\textup{relint}(\Omega)} \gamma(\balpha) \, .
\end{equation}
\end{compactenum}
Thus, there is an \textup{SMW} policy that achieves an exponent arbitrarily close to the optimal one.
\end{thm}

The first part of the theorem states that for any SMW policy with $\balpha$ in the relative interior of $\Omega$, the policy achieves an explicitly specified positive throughput-loss exponent $\gamma(\balpha)$, i.e., the throughput-loss probability decays as $e^{-(\gamma(\balpha)-o(1))\N}$ as $\N\to\infty$.
The second part of the theorem provides a universal upper bound $\gammaopt$ on the exponent that \emph{any} policy can achieve, i.e., for any scheduling policy $U$, the throughput-loss probability is at least $e^{-(\gammaopt+o(1)) \N}$.
Crucially, $\gammaopt$ is identical to the supremum over $\w$ of $\gamma(\balpha)$. In other words, \emph{there is an (almost) exponent optimal \textup{SMW} policy}, and moreover, the scaling parameters for this policy can be obtained as the solution to the explicit problem: $\textup{maximize}_{\balpha \in \textup{relint}(\Omega)} \gamma(\balpha)$.

We note that Theorem~\ref{thm:main_tight} is qualitatively different from the numerous results showing near optimality of (vanilla) maximum weight matching in various open queueing network settings \citep[e.g.,][show that vanilla MaxWeight asymptotically minimizes workload in heavy-traffic in certain open queueing networks under the CRP condition]{stolyar2004maxweight,dai2008asymptotic}. 
Despite our objective (maximize throughput) being symmetric in all the $m$ queues, our result says that there is an optimal \emph{scaled} maximum weight policy, that is \emph{not} symmetric in the $m$ queues; rather, it is uses asymmetric scaling factors that optimally account for the network primitives. 

\medskip
{\bf Intuition for $\gamma(\balpha)$.} Consider the expression for $\gamma(\balpha)$ in \eqref{eq:gamma_w}. It is a minimum over subsets $J \in\mathcal{J}$ of servers of a certain ``robustness'' of the subset to throughput loss. For subset $J$, the robustness  of $\textup{SMW}(\balpha)$'s ability to utilize service tokens originating in $J$ is the product of two terms $B_J\times \log\left(\frac{\lambda_J}{\mu_J}
\right)$ (see Figure \ref{fig:exponent_visualize} for an illustration of the quantities involved):
\begin{compactitem}[leftmargin=*]
  \item ``Protection'' due to $\balpha$: At the resting point $\balpha$ (see Remark \ref{rem:SMW-converges-to-w}) of $\textup{SMW}(\balpha)$, the jobs at neighboring buffers is $B_J=\mathbf{1}^{\T}_{\partial(J)} \balpha$, and the larger that is, the more unlikely it is that the subset will be deprived of jobs.
  \item ``Inherent robustness'' arising from excess of arrival rate over service rate: The logarithmic term $\xi_J \triangleq \log(\lambda_J/\mu_J)$ captures the \emph{inherent robustness} of that subset is to being drained of jobs.
Recall that $\lambda_J$ is the (optimistic) net job arrivals coming in to $\partial(J)$, and that $\mu_J$ is the net service taking jobs out of $\partial(J)$. The larger the ratio $\lambda_J/\mu_J$, the more oversupplied and hence robust $J$ is. 
\end{compactitem}

\begin{figure}[ht]
	\centering
	\includegraphics[width=0.5\textwidth]{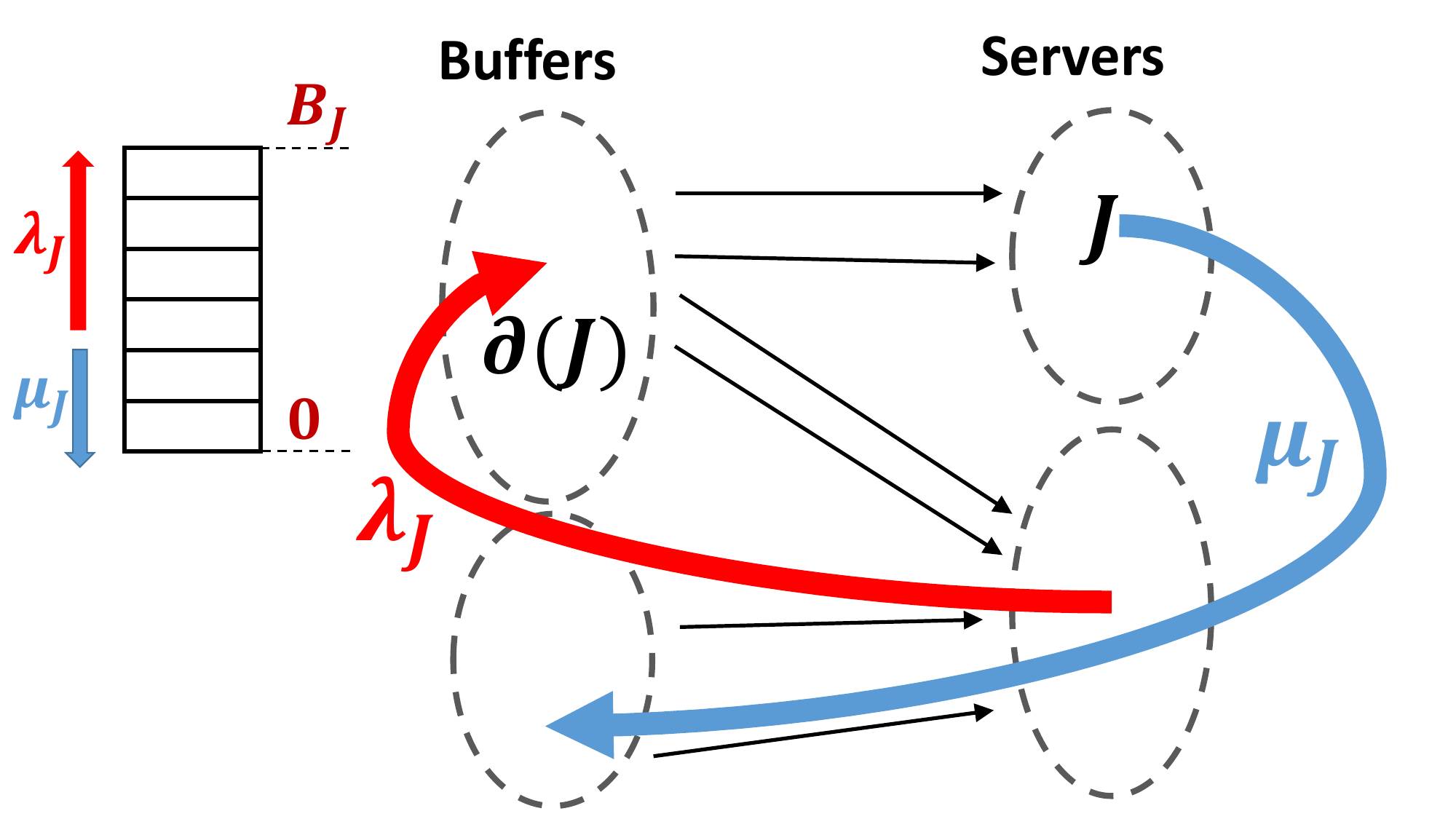}
	\caption{An illustration of the terms $B_J$, $\lambda_J$, and $\mu_J$ in Theorem \ref{thm:main_tight}. \yka{Move the queue on the right to the left beside $\partial(J)$. Scale it down to be the same height as $\partial(J)$. Edit/augment the caption as well if it improves clarity. } 
	}\label{fig:exponent_visualize}
\end{figure}

Remarkably, the expression for robustness of subset $J$ under SMW$(\balpha)$ is as large (i.e., as good) as the throughput-loss exponent for subset $J$ alone would be, with starting state $\balpha$, under a ``protect-$J$'' policy which \emph{exclusively protects $J$ at the expense of all other servers}. (Similar to standard buffer overflow probability calculations, the likelihood of the $KB_J$ jobs at $\partial (J)$ being depleted under a protect-$J$ policy is $\Theta((\lambda_J/\mu_J)^{-KB_J}) = \Theta( \exp(- K B_J \log (\lambda_J/\mu_J)))$. We then set $B_J$ to the starting scaled queue lengths at $\partial (J)$, i.e., $B_J = \mathbf{1}_{\partial (J)}^{\T} \balpha$, to establish the claim.) Thus,  Theorem~\ref{thm:main_tight} part 1 says that given the resting state $\balpha$, $\textup{SMW}(\balpha)$ achieves an exponent such that \emph{it suffers no loss from the need to protecting multiple subsets $J$ simultaneously}. Given this remarkable property, it is intuitive that the globally optimal exponent can be achieved via an SMW policy by choosing $\balpha$ suitably (part 2 of the theorem). \yka{Check for notation conflicts with $\mu_J, \lambda_J, B_J$.}

\medskip
{\bf Proof approach.} We establish Theorem~\ref{thm:main_tight} via a \emph{novel Lyapunov analysis for a closed queueing network.} A key technical challenge we face in our closed queueing network setting is that it is a priori unclear what the ideal state for the system is. This is in contrast to open queueing network settings in which the ideal state is typically the one in which all queues are empty, and the Lyapunov functions considered typically achieve their minimum at this state. We overcome the challenge of unknown ideal state via an innovative approach as follows: We define a \emph{policy-specific} Lyapunov function that achieves its minimum at the resting point of the SMW policy we are analyzing, and use this Lyapunov function to characterize its exponent $\gamma(\balpha)$. Moreover, given the optimal choice of $\balpha$, our tailored Lyapunov function corresponding to this choice of $\balpha$ helps us establish our converse result. In particular, the ideal state is finally revealed as a byproduct of our analysis to be equal to the optimal choice of $\balpha$. Our technical machinery may be broadly useful in deriving large-deviation optimal controls in settings where the appropriate target state is apriori unclear. Our analysis is described in Section \ref{sec:smw_analysis}.

\medskip
{\bf Knowledge requirements.} We remark that choosing the exponent optimal $\balpha$ requires exact knowledge of $\bphi$. However, if a noisy estimate of the service token type distribution is employed to choose $\balpha$ (by maximizing the exponent for the estimated distribution), the resulting SMW policy will nevertheless perform well: (i) it will achieve exponentially small loss (as long as the true $\bphi$ satisfies our assumptions), (ii) if the estimate of $\bphi$ is close to the true distribution, then the exponent achieved by the chosen $\balpha$ will be close to the estimated exponent based on the estimated distribution, since $\gamma(\balpha)$ given by \eqref{eq:gamma_w} varies continuously in $\bphi$ for each $\balpha \in \textup{relint}(\Omega)$.

\subsection{State-independent policies and naive state-dependent policies are inferior}\label{subsec:neg_result_state_independent}

\paragraph{State-independent policies.} Previous works studying control of circulating resources in networks, e.g., \cite{ozkan2016dynamic} and \cite{banerjee2016pricing}, have proposed state-independent control policies. 
We show that in our setting, such policies are not competitive with the SMW policies we have proposed. 
We first formally define state-independent policies.
\begin{defn}[State independent policy]\label{defn:state_independent}
We call a scheduling policy $U$ \yka{Please replace $\pi$ by $U$ throughout.}\emph{state independent} if, for each\footnote{We suppress the dependence on $K$ in our notation.} $K \geq 1$, it maps each $j'\in V_S$, $k\in V_B$, $r\in \mathbb{Z}_+$ to a distribution $u_{j'k}(t_r^{-})$ over $\partial(j')\cup \{\emptyset\}$; for the $r$-th service token arrival with origin $j'$ and destination $k$, the platform serves buffer $i$ drawn independently from distribution $u_{j'k}(t_r^{-})$, ignoring the current state {$\X(t_r^{-})$} and the history.
If $i = \emptyset$ or there is no job at the served buffer, the service token is lost. 
\end{defn}

The next proposition formalizes that for any state independent policy: (i) Exponentially small loss is impossible (even if service token arrival rates are exactly known), (ii) Given a compatibility graph $G$ and a state independent policy, for ``almost all'' service token type distributions $\bphi$ the loss incurred under the policy does not vanish as $K \to \infty$; informally, asymptotic optimality fails if $\bphi$ is not exactly known. The proof is in Appendix \ref{appen:sec:crp}. 
\begin{prop}[All state independent policies have inferior performance]\label{prop:state_ind_no_exp}
Fix a compatibility graph $G$ and any state-independent scheduling policy $U$. We have:
	\begin{compactenum}[label=\arabic*.,leftmargin=*]
	    \item (Exponentially small loss is impossible.) For any service token type distribution $\bphi$, $\mathbb{P}^{\N,U}	= \Omega\left(\frac{1}{\N^2}\right)$. In particular, $\gamma(U)=0$, where $\gamma(\cdot)$ is the exponent defined in \eqref{eq:lb_measure_rate}.
	    \item (For almost all $\bphi$, asymptotic optimality fails.) Let $\textup{Supp}(\bphi) \triangleq \{(j',k)\in V_S\times V_B: \phi_{j'k}>0\}$. Fix any subset of service token types $S \subseteq V_S \times V_B$ such that each server $j' \in V_S$ has at least one service token type in $S$. Let $D(S) \triangleq \{\bphi: \textup{Supp}(\bphi) = S \}$ be the set of service token type distributions with support $S$. 
	    Then, then there is a subset of $D(S)$ which is open and dense in $D(S)$ such that for all $\bphi$ in this subset it holds that $\liminf_{K \to \infty} \mathbb{P}^{K,U} > 0$. \yka{This claim is slightly weaker than $\Omega(1)$ loss. I think $\Omega(1)$ loss may be false, because I can define my SI policy to be based, for different $K$, on different $\bphi$s which form a dense set in $D(S)$.}
	\end{compactenum}
\end{prop}

Proposition~\ref{prop:state_ind_no_exp} shows that as $K$ grows, any state independent policy suffers from inferior performance. 
There are two possibilities regarding what is known about the service token type distribution $\bphi$:
\begin{compactenum}[label=\arabic*.,leftmargin=*]
    \item {\bf $\bphi$ exactly known.} In this case, part 1 of Proposition~\ref{prop:state_ind_no_exp} tells us that any state independent policy has loss $\Omega(\tfrac{1}{K^2})$ whereas any SMW policy produces exponentially small loss (Theorem~\ref{thm:main_tight} part 1) and moroever SMW($\balpha$) is exponent optimal for $\balpha$ chosen to maximize $\gamma(\balpha)$ in \eqref{eq:gamma_w}.
    \item {\bf $\bphi$ is not exactly known.} In this case, any state independent policy typically fails to achieve asymptotic optimality (part 2 of Proposition~\ref{prop:state_ind_no_exp}) whereas vanilla MaxWeight (or any fixed SMW policy) achieves exponentially small loss. 
\end{compactenum}

\section{Analysis of Scaled MaxWeight Policies: Proof of Theorem~\ref{thm:main_tight}}\label{sec:smw_analysis}

In this section, we prove that any SMW policy is exponent optimal given its resting state, and derive explicitly the throughput-loss exponent achieved, and the most likely sample paths leading to throughput loss. In Section \ref{subsec:FSP_FL}, we follow the standard approach for large deviations analyses and characterize the system behavior in the fluid scale through fluid sample paths and fluid limits. In Section \ref{subsec:lyap_functions} we take a novel approach to define 
a family of Lyapunov functions parameterized by the desired state, since we do not know the ideal state for the system.
In Section \ref{subsec:sufficient_condition_opt} we follow  \cite{venkataramanan2013queue} and show that if the Lyapunov function (centered at the starting state) is scale-invariant and sub-additive, a policy that performs steepest descent on this Lyapunov function is exponent optimal. In Section \ref{subsec:smw_exponent_subset} we prove that each SMW policy performs steepest descent on the Lyapunov function centered at its resting state and is hence exponent optimal given its resting state. We also explicitly characterize the optimal exponent, the most likely sample paths leading to throughput loss, and the critical subsets (i.e., the subsets that are most likely to be depleted of jobs). Finally, we deduce Theorem~\ref{thm:main_tight}.

\subsection{Fluid Sample Paths and Fluid Limits}\label{subsec:FSP_FL}
For any stationary scheduling policy $U\in\mathcal{U}$ defined in Section \ref{sec:model}, we define the scaled service token and queue-length sample paths by (the former was defined in \eqref{eq:demand-sample-path})
\begin{equation}\label{eq:fluid_scale}
\fA_{j'k}^{\N}(t)\triangleq \frac{1}{K}\bA_{j'k}^K(t)\, ,\quad
\fX_{i}^{\N,U}(t)\triangleq \frac{1}{K}\bX^{\N,U}_{i}(t)\, ,
\end{equation}
Note that for a fixed policy (with specified tie-breaking rules), each given service token sample path and initial state uniquely determines the state sample path.
We denote this correspondence by $\Psi^{K,U}:(\bar{\mathbf{A}}^{\N}(\cdot),\fX^{\N,	U}(0)) \mapsto \bar{\X}^{\N, U}(\cdot)$. \yka{Shall we write $\Psi^U$ instead of $\Psi$?}

To obtain a large deviation result, we need to study the service token process and the queue-length process in the fluid scaling, as captured in \eqref{eq:fluid_scale}. We take the standard approach of \textit{fluid sample paths} (FSP) (see \citealt{stolyar2003control,venkataramanan2013queue}).\yka{Everything implicitly depends on the policy. This should be discussed and clarified.}
\begin{defn}[Fluid sample paths]
\label{def:FSP}
	We call a pair $(\bar{\mathbf{A}}(\cdot),\bar{\X}^U(\cdot))_{T}\triangleq (\bar{\mathbf{A}}(\cdot),\bar{\X}^U(\cdot))_{t\in[0,T]}$\yka{It seems like $T$ should appear right here, e.g., $(\bar{\mathbf{A}}(t),\bar{\X}^U(t))_{t \in [0,T]}$. Also, at the end of section 2 there was repeated mention of $L^\infty[0,T]$ but here there is not a single mention, though you are trying to be \emph{more} formal here. This seems incongruous.} a fluid sample path on $[0,T]$ (under stationary policy $U$) if there exists a sequence
	$$
	(\,(\bar{\mathbf{A}}^{\N}(\cdot))_{t\in[0,T]},\, \allowbreak \bar{\X}^{\N,U}(0),\, \allowbreak (\Psi^{K,U}(\bar{\mathbf{A}}^{\N}(\cdot),\bar{\X}^{\N,U}(0)))_{t\in[0,T]}\,)
	$$
	where $\bar{\mathbf{A}}^{\N}(\cdot)$  are scaled service token sample paths and $\bar{\X}^{\N,U}(0)\in \Omega$, such that it has a subsequence which converges to $((\bar{\mathbf{A}}(\cdot))_{t\in[0,T]},\bar{\X}^U(0),(\bar{\X}^U(\cdot))_{t\in[0,T]})$ uniformly on $[0,T]$.\yka{Shouldn't we define FSPs for any $U$ instead of restricting to $SMW(\balpha)$ only? I imagine we need it for the converse.}
\end{defn}

In short, FSPs include both typical and atypical sample paths. Recall Fact 1, which gives the likelihood for an unlikely event to occur based on the most likely fluid sample path that causes the event.
Accordingly, the large deviations analysis in Section \ref{subsec:smw_exponent_subset} will identify the most likely FSP that leads to throughput loss.

\textit{Fluid limits} are fluid sample paths that characterize \textit{typical} system behavior, as they are the formal limits in the Functional Law of Large Numbers \citep{dai1995positive}.
\begin{defn}[Fluid limits]
	We call a pair $(\bar{\mathbf{A}}(\cdot),\bar{\X}^U(\cdot))_{T}$ a fluid limit on $[0,T]$ (under stationary policy $U$) if (i) the pair $(\bar{\mathbf{A}}(\cdot),\bar{\X}^U(\cdot))_{T}$ is a fluid sample path; (ii) we have $\bar{\mathbf{A}}_{j'k}(t)={\phi}_{j'k} t$,\yka{Do you mean $\hat{\phi}_{j'k}$?} for all $j'\in V_S,k\in V_B$ and all $t\in[0,T]$.
\end{defn}

\subsection{A family of Lyapunov functions}\label{subsec:lyap_functions}
Lyapunov functions are a useful tool for analyzing complex stochastic systems.
In open queuing networks the ideal state is one in which all queues are empty, and correspondingly the Lyapunov function is chosen to achieve its minimum value in the ideal state, e.g., the sum of squared queue lengths Lyapunov function is a popular choice \citep[][etc.]{tassiulas1992stability,eryilmaz2012asymptotically}, while others have also used piecewise linear Lyapunov functions (\citealt{bertsimas2001performance,venkataramanan2013queue}, etc.).
Since our setting is a closed queueing network and ideal state is unknown, we instead construct a novel approach. We define a family of piecewise linear Lyapunov functions, parameterized by the desired state $\balpha$, such that the function achieves its minimum at $\balpha$.
\begin{defn}\label{defn:lyap_func}
	For each $\balpha\in \textup{relint}(\Omega)$, define Lyapunov function $\V_{\balpha}(\mathbf{x}):\Omega \rightarrow [0,1]$ as
	$
	\V_{\balpha}(\mathbf{x})\triangleq 1-\min_{i}\frac{x_i}{\w_i}.
	$
\end{defn}

The intuition behind our definition is as follows.
The Lyapunov function value is jointly determined by the desired state $\balpha$ of the system (under some policy) and our objective of avoiding throughput loss, and can be interpreted as the energy of the system at each state.
The desired state should have minimum energy, and the most undesirable states should have maximum energy.
In our case the boundary $\partial \Omega$ of $\Omega$ is most undesirable since throughput loss only happens there, and correspondingly, $L_{\balpha}(\bx)=1$ for $\bx \in \partial \Omega$, whereas $L_{\balpha}(\balpha)=0$ as we want. In general, for $\bx \in \Omega$, $L_{\balpha}(\bx)$ is one minus the smallest scaled queue length, given scaling factors $\balpha$.
See Figure \ref{fig:lyapunov_level} for an illustration.

These functions moreover have the following properties which play a key role in our analysis: 
\begin{lem}[Key properties of $L_{\balpha}(\cdot)$]\label{lem:key_property_lyap}
	For $L_{\balpha}(\cdot)$ with $\alpha\in\textup{relint}(\Omega)$, we have:
	\begin{compactenum}[label=\arabic*.,leftmargin=*]
		\item Scale-invariance (about $\balpha$). $L_{\balpha}(\balpha+c\Delta\bx)=cL_{\balpha}(\balpha+\Delta\bx)$ for any $c>0$ and $\Delta\bx \in \mathbb{R}^m$ such that $\mathbf{1}^{\T} \Delta\bx = 0$ and $\balpha+\Delta\bx \in \Omega, \balpha+ c\Delta\bx \in \Omega$. \yka{We should use some other notation like $\Delta \bx$ and $\Delta \bx'$ instead of $\bx$ since this quantity (the deviation from $\balpha$) does not belong to $\Omega$.}
		\item Sub-additivity (about $\balpha$).
		$L_{\balpha}(\balpha+\Delta\bx+\Delta\bx')\leq L_{\balpha}(\balpha+\Delta\bx) + L_{\balpha}(\balpha+\Delta\bx')$ for any $\Delta\bx , \Delta\bx' \in \mathbb{R}^m$ such that $\mathbf{1}^{\T} \Delta\bx = \mathbf{1}^{\T} \Delta\bx' = 0$ and $\balpha+\Delta\bx +\Delta\bx', \balpha+\Delta\bx, \balpha+\Delta\bx' \in \Omega$.
	\end{compactenum}
\end{lem}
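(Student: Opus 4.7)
The plan is to verify both properties by direct computation, exploiting the fact that for any deviation $\Delta\bx$ with $\mathbf{1}^T \Delta\bx = 0$ such that $\alpha + \Delta\bx \in \Omega$, we can rewrite $L_\alpha(\alpha + \Delta\bx) = 1 - \min_i \frac{\alpha_i + \Delta x_i}{\alpha_i} = -\min_i \frac{\Delta x_i}{\alpha_i}$. This simple identity does all the work: it shows the constant $1$ in the definition of $L_\alpha$ is essentially a normalization that disappears once we measure $\bx$ relative to the reference point $\alpha$, and so near $\alpha$ the Lyapunov function behaves like a (nonlinear) seminorm on deviations.

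For scale-invariance, I would apply this identity to both $L_\alpha(\alpha + c\Delta\bx)$ and $L_\alpha(\alpha + \Delta\bx)$, yielding
\begin{align*}
L_\alpha(\alpha + c\Delta\bx) \;=\; -\min_i \frac{c\,\Delta x_i}{\alpha_i} \;=\; -c\,\min_i \frac{\Delta x_i}{\alpha_i} \;=\; c\, L_\alpha(\alpha + \Delta\bx),
\end{align*}
where pulling the positive constant $c>0$ outside the min is what requires $c>0$ (else the min would become a max).

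For sub-additivity, I would again apply the identity and let $i^\ast \in \arg\min_i \frac{\Delta x_i + \Delta x'_i}{\alpha_i}$ be a minimizer for the combined deviation. Then
\begin{align*}
L_\alpha(\alpha + \Delta\bx + \Delta\bx') &= -\frac{\Delta x_{i^\ast}}{\alpha_{i^\ast}} - \frac{\Delta x'_{i^\ast}}{\alpha_{i^\ast}} \;\leq\; -\min_i \frac{\Delta x_i}{\alpha_i} - \min_j \frac{\Delta x'_j}{\alpha_j},
\end{align*}
since evaluating each sum at $i^\ast$ is no smaller than taking the minimum over all coordinates independently. The right-hand side equals $L_\alpha(\alpha + \Delta\bx) + L_\alpha(\alpha + \Delta\bx')$, giving the claim.

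There is no real obstacle here; the proof is a few lines. The only subtlety is being careful that all three arguments $\alpha + \Delta\bx$, $\alpha + \Delta\bx'$, and $\alpha + \Delta\bx + \Delta\bx'$ lie in $\Omega$ (which is given in the hypothesis), so that the Lyapunov function is well-defined at each point (in particular, $\alpha_i > 0$ for all $i$ since $\alpha \in \textup{relint}(\Omega)$, making the ratios $\Delta x_i / \alpha_i$ finite and the formula valid). The hypotheses $\mathbf{1}^T \Delta\bx = \mathbf{1}^T \Delta\bx' = 0$ are what allow us to write $L_\alpha$ in the reduced form $-\min_i \Delta x_i/\alpha_i$; without them the "$1-$" in the definition would not cleanly cancel.
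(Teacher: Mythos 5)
Your proof is correct and follows essentially the same route as the paper's: reduce $L_\alpha(\alpha+\Delta\bx)$ to $-\min_i \Delta x_i/\alpha_i$, pull out the positive constant $c$ for scale-invariance, and bound the min of a sum by the sum of mins for sub-additivity. Nothing to add.
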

The proof of Lemma \ref{lem:key_property_lyap} is in Appendix \ref{appen:sec:fsp}.
\yka{Why is this the right Lyapunov fn? A: given starting state and objective it is the right one. talk abt scale invariance.}

\begin{figure}[ht]
	\centering
	\includegraphics[width=0.4\columnwidth]{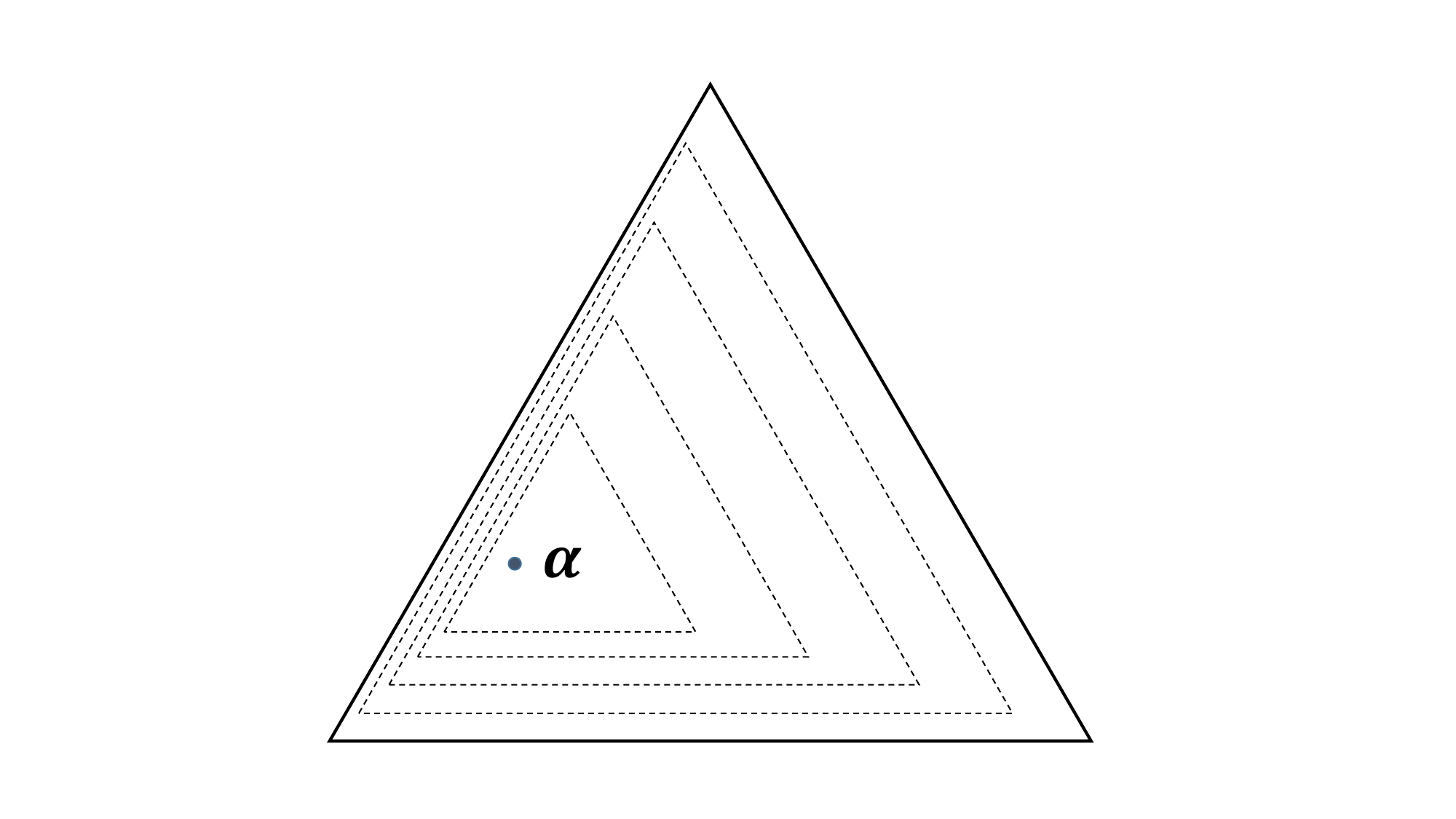}
	\caption{Sub-level sets of $\V_{\balpha}$ when $|V_B|=|V_S|=3$. State space $\Omega$ is the probability simplex in $\mathbb{R}^3$, and its boundary coincides with
		$\{\mathbf{x}:\V_{\balpha}(\mathbf{x}) = 
1,\mathbf{1}^{T}\bx=1\}$. The minimum value is achieved at $\balpha$; $L_{\balpha}(\balpha) = 0$. }\label{fig:lyapunov_level}
\end{figure}

A time $t\in(0,T)$ is said to be a \emph{regular point} of an FSP $(\fA(\cdot),\fX^U(\cdot))_T$ if $\fA(\cdot),\fX^U(\cdot),L_{\balpha}(\fX^U(\cdot))$ are all  differentiable at time $t$. 
Because of the Large Deviations Principle (Fact \ref{fact:sample_path_ldp}), it will suffice in our analysis to consider only the FSPs that have absolutely continuous service token sample paths.
Now, if $\fA(\cdot)$ is absolutely continuous, then so are $\fX^U(\cdot)$ and $L_{\balpha}(\fX^U(\cdot))$, and as a result almost all $t$ are regular:
For any policy $U\in\cU$ and FSP $(\fA(\cdot),\fX^U(\cdot))_T$, it holds that for any $t,t'$, $||\fX^U(t)-\fX^U(t')||_1 \leq 2||\fA(t)-\fA(t')||_1$ because jobs relocate only when service token arrives, and $|L_{\balpha}(\fX^U(t))-L_{\balpha}(\fX^U(t'))| \leq \frac{1}{\min_{i\in V_B}\alpha_i}||\fX(t)-\fX(t')||_{\infty}\leq \frac{1}{\min_{i\in V_B}\alpha_i}||\fX(t)-\fX(t')||_{1}$ (see Appendix \ref{appen:sec:fsp} for the short proof). As a result, if $\fA(\cdot)$ is absolutely continuous then so are $\fX(\cdot)$ and $L_{\balpha}(\fX^U(\cdot))$.

\subsection{Sufficient conditions for exponent optimality}\label{subsec:sufficient_condition_opt}
In this section, given a starting state, we provide a converse bound on the exponent for any stationary policy $U\in\mathcal{U}$, and derive sufficient conditions for a policy to achieve this bound.
\yka{At some point here or later we should remark on the fact that considering this ``weak'' adversary who only chooses between time invariant arrival patterns already gives us a tight converse. I imagine this has to do with the subadditivity and scale invariance of $\mathbf{f}$ and the state invariance of the adversary's cost function.}

We use the intuition from differential games (see, e.g., \citealt{atar2003escape}) to informally illustrate the interplay between the control and the most likely sample path leading to throughput loss.
Consider a zero-sum game between the adversary (nature) who chooses the fluid-scale service token arrival process $\fA(\cdot)$, and the controller who decides the scheduling rule $U$, where the adversary minimizes the large-deviation ``cost'' of a service token sample path that leads to throughput loss. Specifically, the adversary's cost for a service token sample path $\fA(\cdot)$ is the rate function defined in (\ref{eq:ld_rate}), i.e., the exponent. The converse bound we will obtain next will correspond to the adversary playing first and choosing the minimum cost \emph{time-invariant} service token sample path that ensures throughput loss.
The following pleasant surprises will emerge subsequently: (i) we will find an equilibrium in pure strategies to the aforementioned zero-sum game, (ii) the converse will turn out to be tight, i.e., the adversary's equilibrium service token sample path will be time invariant, (iii) the controller's equilibrium scheduling strategy will be an SMW policy with specific $\balpha$ (this simple policy will satisfy the sufficient conditions for achievability we will state immediately after our converse, in Proposition~\ref{prop:tight_converse}).

We provide a policy-independent upper bound on the exponent that only depends on the starting state.
First, for any $\mathbf{f} \in \mathbb{R}_+^{n \times m}$, define
\begin{align}
\mathcal{X}_{\mathbf{f}}\triangleq
\left\{\Delta \bx \left|
\begin{array}{ll}
\Delta x_i = \sum_{j'\in V_S}f_{j'i} - \sum_{j'\in \partial(i)}d_{ij'}\left(\sum_{k\in V_B}f_{j'k}\right),&\forall i\in V_B\\
\sum_{i\in\partial(j')}d_{ij'}=1,\ d_{ij'}\geq 0, &\forall i\in V_B,j'\in V_S
\end{array}
\right.\right\},\label{eq:fluid_control_polytope}
\end{align}
which is the attainable change of (normalized) state in unit time, given that the average service token arrival rates during this period are $\mathbf{f}$ and assuming no service token is wasted. (Here $(d_{ij'})_{i \in \partial(j')}$ is the chosen \emph{scheduling distribution} over buffers neighboring $j'$ for scheduling jobs to be served by service tokens originating at $j'$.)
Then given starting state $\balpha$, the attainable states at time $T$ belong to $\balpha + T\mathcal{X}_{\mathbf{f}}\triangleq\{\by\in\mathbb{R}^m:\by=\balpha+T\bx,\bx\in\mathcal{X}_{\bof}\}$\pqa{note to self: define $T\mathcal{X}_{\mathbf{f}}$.}\yka{delete ``(using the scale-invariance of $L_{\balpha}(\cdot)$)''}, if no service token is wasted during $[0,T]$ and the average service token arrival rate is $\bof$.
We obtain an upper bound on the throughput-loss exponent  by considering the most likely $\mathbf{f}$ and $T$ such that $\balpha + T\mathcal{X}_{\mathbf{f}}$ {lies entirely outside the state space $\Omega$.
Because the true state must lie in $\Omega$, there must be service token wasted during $[0,T]$, no matter the scheduling rule $\mathbf{d}$ used by the controller.}

\begin{lem}[Converse bound on the exponent]\label{lem:point_wise_converse}
	For any stationary policy $U\in\mathcal{U}$, it holds that 
	\begin{align}\label{eq:rate_cb_bound}
		-\liminf_{K\to\infty}\frac{1}{K}\log\mathbb{P}^{K,U}
		\leq \textup{sup}_{\balpha \in \textup{relint}(\Omega)} \gamma_{\textup{\tiny CB}}(\balpha) \, ,
	\end{align}
	\begin{align*}
	\textup{where, for $\Lambda^*(\cdot)$ given by \eqref{eq:kl_divergence},}\ \
	\gamma_{\textup{\tiny CB}}(\balpha) \triangleq
	 \inf_{\mathbf{f} \in \mathbb{R}_+^{nm}:v_{\alpha}(\mathbf{f})>0}\,
	 \frac{\Lambda^*(\mathbf{f})}{v_{\alpha}(\mathbf{f})}\, ,
	\ \;
	\textup{and}\ \;
	v_{\alpha}(\mathbf{f})\triangleq \min_{\Delta \bx\in\mathcal{X}_{\mathbf{f}}} L_{\balpha}(\balpha+\Delta \bx)\, .
	\end{align*}
\end{lem}

We now provide an informal explanation for the form of this key lemma. The $\balpha$ in \eqref{eq:rate_cb_bound} captures the most frequently visited (normalized) state (the ``resting'' state) in steady state under $U$, and $\gamma_{\textup{\tiny CB}}(\balpha)$ is an upper bound on the exponent given the most frequent state $\balpha$. Let us informally describe the expression for $\gamma_{\textup{\tiny CB}}(\balpha)$. Suppose the system starts in state $\balpha$. Then $v_{\balpha}(\mathbf{f})$ is the minimum rate of increase of $L_{\balpha}(\cdot)$ under service token arrival rates $\mathbf{f}$, no matter the scheduling distributions $\mathbf{d}$. So, starting at $\balpha$ and under time-invariant service token arrival rates $\mathbf{f}$, the state hits $\partial\Omega$ and service token is wasted in time at most $1/v_{\balpha}(\mathbf{f})$, implying a throughput-loss exponent of at most $\frac{\Lambda^*(\mathbf{f})}{v_{\balpha}(\mathbf{f})}$. The upper bound $\gamma_{\textup{\tiny CB}}(\balpha)$ follows from
minimizing over $\mathbf{f}$ since nature can choose any $\mathbf{f}$.
Finally, the bound in \eqref{eq:rate_cb_bound} takes the supremum over $\balpha$ since the policy can choose its resting state.
The proof of Lemma \ref{lem:point_wise_converse} is in Appendix \ref{appen:sec:point_wise_converse}.

Recall that for a function $g(\cdot):\mathbb{R}_+\to\mathbb{R}^d$ for some positive integer $d$, we use $\dot{g}(t)$ to denote the derivative of $g$ at time $t$ when the derivative exists.

The following proposition provides sufficient conditions for a policy to achieve the converse bound exponent $\gamma_{\textup{\tiny CB}}(\balpha)$. 
The conditions are requirements on the time derivative of $L_{\balpha}(\fX^U(t))$.
\begin{prop}[Sufficient conditions]\label{prop:tight_converse}
	Fix $\balpha\in\textup{relint}(\Omega)$. Let $U\in\mathcal{U}$ be a stationary, non-idling policy. Suppose that for each regular point $t$,\yka{Where is ``regular'' defined?} the following hold:
	\begin{compactenum}[label=\arabic*.,leftmargin=*]
		\item (Steepest descent). For any \yka{delete ``fluid sample path $(\fA(\cdot),\fX^{U}(\cdot))$''}service token fluid sample path $\fA(\cdot)$, we have
		\begin{align*}
		\dot{L}_{\balpha}(\fX^U(t))
			 = \inf_{U'\in\mathcal{U}_{\rm {\tiny ni}}}\left\{\dot{L}_{\balpha}(\fX^{U'}(t))
			 \left|
			 \fX^{U'}(t) = \fX^{U}(t)
			 \right.
			 \right\}\, ,
		\end{align*}
				for corresponding queue-length sample paths satisfying $\fX^U(t)\neq\balpha$ and $L_{\balpha}(\fX^U(t))<1$, where $\mathcal{U}_{\rm {\tiny ni}}$ is the set of non-idling policies;
\item (Negative drift). There exists $\eta>0$ and $\epsilon>0$ such that for all FSPs $(\fA(\cdot),\fX^U(\cdot))$ satisfying $\dot{\fA}(t)\in B(\bphi,\epsilon)$ and $\fX(t)\neq \balpha$, we have
		$
		\dot{L}_{\balpha}(\bar{\X}^U(t))
		\leq -\eta$.
Here $B(\bphi,\epsilon)$ is a ball with radius $\epsilon$ centered on the typical service token type distribution $\bphi$.
		 \yka{What is $B(\bphi,\epsilon)$? Why do we need property 2 if we have property 3? I suggest we write only property 3 and call it negative drift. Explain that $B$ is a ball centered on the typical service token arrival rate $\phi$, so in words, we require negative drift for near typical service token arrival rate, no matter what the current state is. Mention also that this property forces the state to return to $\balpha$!}
	\end{compactenum}
Then we have $\gamma(U) = \gamma_{\textup{\tiny CB}}(\balpha)$.
\end{prop}

Informally, the negative drift property requires the policy to have negative Lyapunov drift for near typical service token arrival rates, as long as the current state is not $\balpha$. This property forces the state to return to $\balpha$.

The full proof of Proposition \ref{prop:tight_converse} is quite technical and is included in Appendix \ref{appen:sec:sufficient}, but the key idea is straightforward.
Given starting state $\balpha$, the (i) steepest descent property of $U$ and (ii) the scale-invariance and sub-additivity of $L_{\balpha}(\cdot)$, together ensure that the speed at which $L_{\balpha}(\cdot)$ increases under $U$ cannot exceed the minimum speed $v_{\balpha}(\mathbf{f})$ in the converse construction (Lemma \ref{lem:point_wise_converse}) for $\mathbf{f}\triangleq \dot{\fA}(t)$.
Mathematically,
\begin{align}
&\quad \dot{L}_{\balpha}(\fX^U(t))\left|_{\dot{\fA}(t) = \mathbf{f}} \right.\nonumber\\[2pt]
&= \inf_{U'\in\mathcal{U}_{\rm {\tiny ni}}}\left\{\dot{L}_{\balpha}(\fX^{U'}(t))
\left|\dot{\fA}(t) = \mathbf{f} \right.
\right\}&\textup{(steepest descent)}\nonumber\\
&=\min_{\Delta\bx\in\mathcal{X}_{\mathbf{f}}}
\lim_{\Delta t\to 0}\frac{L_{\balpha}(\bar{\bX}^{U}(t)+\Delta\bx\Delta t)-
L_{\balpha}(\bar{\bX}^{U}(t))
}{\Delta t}&\textup{(definition of $\mathcal{X}_{\mathbf{f}}$)}\nonumber\\
&\leq \min_{\Delta\bx\in\mathcal{X}_{\mathbf{f}}}\lim_{\Delta t\to 0}\frac{L_{\balpha}(\balpha+\Delta\bx\Delta t)
}{\Delta t} &\textup{(sub-additivity of $L_{\balpha}$, Lemma \ref{lem:key_property_lyap})}\label{eq:sub_add_ineq}\\
&= \min_{\Delta\bx\in\mathcal{X}_{\mathbf{f}}}L_{\balpha}(\balpha+\Delta\bx)=v_{\balpha}(\mathbf{f})\,.&\textup{(scale-invariance of $L_{\balpha}$, Lemma \ref{lem:key_property_lyap})}\nonumber
\end{align}
As a result, the throughput-loss exponent under $U$ is no worse than $\gamma_{{\textup{\tiny CB}}}(\balpha)$.

\subsection{Optimality of SMW policies, explicit exponent, and critical subsets}\label{subsec:smw_exponent_subset}
In this section, we verify that SMW policies satisfy the sufficient conditions in Proposition \ref{prop:tight_converse}.
In doing so, we reveal the critical subset structure of the most-likely sample paths for throughput loss and derive the explicit exponent for SMW($\balpha$).
Proofs for this section are in Appendix \ref{appen:sec:smw}.

The following lemma shows that the Lyapunov drift only depends on the nodes with shortest scaled queue lengths, its service of jobs from these queues.

\begin{lem}[SMW($\balpha$) causes steepest descent]\label{lem:lyapunov_derivative}
	Let $(\fA,\bar{\X}^{U})$ be any FSP under any non-idling policy $U$ on $[0,T]$, and consider any $\balpha\in \textup{relint}(\Omega)$.
	For a regular $t\in[0,T]$, define:
	\begin{align*}
	S_1(\fX^U(t))&\triangleq\left\{k\in V_B:k\in\textup{argmin}
	\frac{\fX_k^U(t)}{\w_k}\right\},\\
	S_2\left(\fX^U(t),\dot{\fX}^U(t)\right)&\triangleq\left\{k\in S_1(\bar{\X}^U(t)):k\in\textup{argmin}
	\frac{\dot{\fX}_k^U(t)}{\w_k}\right\}.
	\end{align*}
	All the derivatives are well defined since $t$ is regular. We have
	\begin{align}
	\dot{L}_{\balpha}(\fX^U(t)) &= -\frac{\dot{\fX}_k^U(t)}{\w_k} \quad \textup{ for any } k\in S_2(\bar{\X}^U(t),\dot{\fX}^U(t))\label{eq:lyap_FSP_1}\\
	&\geq -\frac{1}{\mathbf{1}^{\T}_{S_2}\balpha}
	\left(\sum_{j'\in V_S,k\in S_2}\dot{A}_{j'k}(t)
	-
	\sum_{j'\in V_S:\partial (j')\subseteq S_2,k\in V_B}\dot{A}_{j'k}(t)\right)\label{eq:lyap_FSP_2}
	\end{align}
	for $\fX^U(t)\neq \balpha$ and $L_{\balpha}(\fX^U(t))< 1$.
	Inequality (\ref{eq:lyap_FSP_2}) holds with equality under \textup{SMW($\balpha$)}, i.e., \textup{SMW($\balpha$)} satisfies the steepest descent property in Proposition \ref{prop:tight_converse}.
\end{lem}

In Lemma \ref{lem:lyapunov_derivative_fluid}, we prove that SMW($\balpha$) satisfies the negative drift \yka{delete ``and robust drift''}property.
In particular, the drift $\eta$ is related to the Hall's gap (i.e., the slack in the CRP condition) of the network; see Appendix~\ref{appen:sec:smw} for details.

\begin{lem}[SMW($\balpha$) satisfies negative drift]\label{lem:lyapunov_derivative_fluid}
	For any $\balpha \in \textup{relint}(\Omega)$, under Assumptions~\ref{asm:connectivity}, \ref{asm:non_trivial} and \ref{asm:strict_hall}, the policy \textup{SMW}($\balpha$)  satisfies the negative drift condition in Proposition \ref{prop:tight_converse}.
\end{lem}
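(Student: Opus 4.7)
My plan is to invoke Lemma~\ref{lem:lyapunov_derivative} and exploit that inequality~\eqref{eq:lyap_FSP_2} holds with equality under $\textup{SMW}(\alpha)$. This gives the explicit identity
\begin{equation*}
\frac{d}{dt}L_\alpha(\bar{\X}^{\textup{SMW}(\alpha)}[t]) \;=\; -\frac{1}{\mathbf{1}^T_{S_2}\alpha}\left(\sum_{k\in S_2}\mathbf{1}^T\bof_{(k)} \;-\; \sum_{j'\in J}\mathbf{1}^T\bof_{j'}\right),
\end{equation*}
where $\bof \triangleq \tfrac{d}{dt}\fA[t]$ and $J \triangleq \{j'\in V_D : \partial(j') \subseteq S_2\}$ (since the second sum in \eqref{eq:lyap_FSP_2} is precisely over such $j'$). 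It therefore suffices to lower bound the parenthesized quantity uniformly, first at $\bof = \phi$, then by continuity on a small ball around $\phi$. A preliminary observation is that $\bar{\X}[t] \neq \alpha$ forces the scaled queue lengths $\bar X_k/\alpha_k$ not to all be equal, because $\mathbf{1}^T\bar{\X}[t] = \mathbf{1}^T\alpha = 1$ would otherwise force $\bar{\X}[t]=\alpha$. Consequently $S_2 \subseteq S_1 \subsetneq V_S$, so $S_2$ is a nonempty proper subset of $V_S$.

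I then split into two cases according to whether $J$ is empty. When $J = \emptyset$, the parenthesized quantity reduces to $\sum_{k\in S_2}\mathbf{1}^T\phi_{(k)} \geq \lambda_{\min} \triangleq \min_{k\in V_S} \mathbf{1}^T\phi_{(k)}$, which is strictly positive under the mild non-degeneracy that every supply location has positive incoming rate (WLOG after pruning). When $J \neq \emptyset$, I claim $J \subsetneq V_D$: otherwise $J = V_D$ would imply $\bigcup_{j'\in V_D}\partial(j') \subseteq S_2$, which (assuming every supply location serves some demand) forces $S_2 = V_S$, contradicting $S_2 \subsetneq V_S$. With $J$ thus a nonempty proper subset of $V_D$, Assumption~\ref{asm:strict_hall} yields $\sum_{k\in\partial(J)}\mathbf{1}^T\phi_{(k)} - \sum_{j'\in J}\mathbf{1}^T\phi_{j'} \geq \xi > 0$, where
\begin{equation*}
\xi \;\triangleq\; \min_{\emptyset \neq J' \subsetneq V_D} \Bigl(\sum_{k\in\partial(J')}\mathbf{1}^T\phi_{(k)} - \sum_{j'\in J'}\mathbf{1}^T\phi_{j'}\Bigr),
\end{equation*}
and since $\partial(J) \subseteq S_2$ the same lower bound applies to $\sum_{k\in S_2}\mathbf{1}^T\phi_{(k)} - \sum_{j'\in J}\mathbf{1}^T\phi_{j'}$. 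Combining both cases and using $\mathbf{1}^T_{S_2}\alpha \leq 1$ yields $\tfrac{d}{dt}L_\alpha \leq -\min(\xi,\lambda_{\min}) < 0$ whenever $\bof = \phi$ and $\bar{\X}[t] \neq \alpha$.

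Robustness to perturbations of $\bof$ follows from a finiteness-plus-continuity argument: the parenthesized quantity is linear in $\bof$ for each fixed pair $(S_2, J)$, and there are only finitely many such pairs. Hence the minimum over all admissible $(S_2, J)$ is a continuous (piecewise linear) function of $\bof$, strictly positive at $\bof = \phi$, and so it remains at least $\min(\xi, \lambda_{\min})/2$ on some Euclidean ball $B(\phi, \epsilon)$ by continuity. Setting $\eta = \min(\xi, \lambda_{\min})/2$ then verifies the negative drift condition of Proposition~\ref{prop:tight_converse}. The main subtlety is Case~1 ($J = \emptyset$), which cannot be handled by CRP directly and instead rests on the (WLOG) non-degeneracy that every supply location has positive arrival rate; the rest of the argument is a transparent application of the Hall-gap inequality in Assumption~\ref{asm:strict_hall} combined with the steepest-descent identity from Lemma~\ref{lem:lyapunov_derivative}.
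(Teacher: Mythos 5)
Your proposal is correct and follows essentially the same route as the paper: invoke the equality case of Lemma~\ref{lem:lyapunov_derivative} for $\textup{SMW}(\alpha)$, bound the resulting flow-balance expression below by $\min\{\xi,\lambda_{\min}\}$ using the Hall's gap from Assumption~\ref{asm:strict_hall} (with the $\lambda_{\min}$ term covering the $J=\emptyset$ case), drop the factor $1/\mathbf{1}^T_{S_2}\alpha \geq 1$, and extend to a ball $B(\phi,\epsilon)$ by continuity of a minimum of finitely many linear functions of $\bof$. Your write-up is in fact slightly more careful than the paper's on two points it leaves implicit --- the justification that $\bar{\X}[t]\neq\alpha$ forces $S_2\subsetneq V_S$, and the explicit split on whether $J=\{j':\partial(j')\subseteq S_2\}$ is empty together with the associated non-degeneracy pruning of $V_S$.
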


Before proceeding with our analysis, we point out that Lemma~\ref{lem:lyapunov_derivative_fluid} implies that $\balpha$ is the unique resting state of SMW($\balpha$) policy.
\begin{prop}[Resting state of SMW($\balpha$)]\label{prop:smw-resting-state}
Suppose Assumptions \ref{asm:connectivity}, \ref{asm:non_trivial} and \ref{asm:strict_hall} hold.
For any $\balpha \in \textup{relint}(\Omega)$, there exists $T_0>0$ such that any fluid limit ($\fA,\fX$) on $[0,T]$ (where $T>T_0$) under SMW($\balpha$) satisfies
$\fX(t)=\balpha$
for all $t \in [T_0, T]$.
\end{prop}

Combining Proposition~\ref{prop:tight_converse} with Lemmas~\ref{lem:lyapunov_derivative} and \ref{lem:lyapunov_derivative_fluid}, we immediately deduce that SMW$(\balpha)$ achieves the best possible exponent given resting state $\balpha$.\yka{We don't need to cite Lemma~\ref{lem:key_property_lyap} here, right? It got used in the proof of Prop \ref{prop:tight_converse} and that's it, no?}
\begin{cor}\label{cor:smw_exp_opt}
	For any $\balpha\in\textup{relint}(\Omega)$, we have $ \gamma(\balpha) = \gamma_{\textup{\tiny CB}}(\balpha)$.
\end{cor}

We argued in Section \ref{subsec:sufficient_condition_opt} that the most likely queue-length sample path leading to throughput loss with initial state $\balpha$ should be radial:
when the controller chooses an exponent-optimal policy,
the adversary picks a constant arrival rate $\bof$ such that the sample path of queue lengths is radial starting at $\balpha$, and the Lyapunov function increases at a constant rate.
From Lemma \ref{lem:lyapunov_derivative} we see that the rate at which the Lyapunov function increases depends on the (scaled) inflow and outflow rate of jobs in each subset.
Since the most likely queue-length sample path is radial, this sample path should drain the jobs of one subset (the critical subset), and that subset will determine the throughput-loss exponent.
We next lemma obtains an explicit expression for $\gamma_{{\textup{\tiny CB}}}(\balpha)$ and the most likely service token FSP forcing throughput loss.
\begin{lem}\label{lem:explicit_gamma}
	Recall the definitions of $\mathcal{J}$ in \eqref{eq:set_drainable} and $B_J,\lambda_J$ and $\mu_J$ in \eqref{eq:gamma_w}.
	For any $\balpha \in \textup{relint}(\Omega)$, we have $
		\gamma_{{\textup{\tiny CB}}}(\balpha)
		 = \min_{J \in\mathcal{J}}B_J \log(\lambda_J/\mu_J)$. 
		 Moreover, the infimum in the definition of $\gamma_{{\textup{\tiny CB}}}(\balpha)$ in Lemma \ref{lem:point_wise_converse} is achieved by the following $\mathbf{f}^*$: for any $J^* \in \textup{argmin}_{J\in\mathcal{J}}
	B_{J}\log(\lambda_J/\mu_J)$,
	\begin{align}	
	f^{*}_{j'k}\triangleq
	\left\{
	\begin{array}{ll}
	\hat{\phi}_{j'k}\lambda_{J^*}/\mu_{J^*}&\textup{ for }j'\in J^*,k\notin\partial(J^*)\, ,\\
	\hat{\phi}_{j'k}\mu_{J^*}/\lambda_{J^*}&\textup{ for }j'\notin J^*,k\in\partial(J^*)\, ,\\
	\hat{\phi}_{j'k}&\text{ otherwise}\, .
	\end{array}
	\right.
\end{align}
\end{lem}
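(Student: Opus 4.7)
\medskip
\noindent\textit{Proof plan.} My plan is to establish two key ingredients separately and then combine them. The first ingredient is an explicit formula for $v_\alpha(\mathbf{f})$: for any probability vector $\mathbf{f}$,
\begin{align*}
v_\alpha(\mathbf{f}) \;=\; \max_{J \subseteq V_D,\, J \neq \emptyset} \frac{[D_J(\mathbf{f})]^+}{B_J}\, ,\quad \textup{where}\quad D_J(\mathbf{f}) \;\triangleq\; \textstyle\sum_{j'\in J,\,k\notin\partial(J)}f_{j'k} \;-\; \sum_{j'\notin J,\,k\in\partial(J)}f_{j'k}\, .
\end{align*}
Observe that $D_J(\phi) = \mu_J - \lambda_J$ in the notation of Theorem~\ref{thm:main_tight}. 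The second ingredient is a tilted-distribution / KL-divergence bound: for each $J\in\mathcal{J}$ and every $\mathbf{f}$ on the simplex, $\Lambda^*(\mathbf{f}) \geq \log(\lambda_J/\mu_J)\cdot D_J(\mathbf{f})$.

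For the $\geq$ direction of ingredient one, I would use the identity $L_\alpha(\alpha+\Delta\bx) = \max_i(-\Delta x_i/\alpha_i) \geq -\sum_{i\in\partial(J)}\Delta x_i / B_J$; the numerator is the net outflow of supply from $\partial(J)$ under any feasible dispatching $d$, and this outflow is at least $D_J(\mathbf{f})$ because all demand in $J$ must be served from $\partial(J)$ while the total inflow to $\partial(J)$ from demand outside $J$ is at most $\lambda_J(\mathbf{f})$. For the $\leq$ direction, I would construct a dispatching $d^*$ achieving $\max_i(-\Delta x_i(d^*)/\alpha_i)\leq M\triangleq \max_J D_J(\mathbf{f})/B_J$ via max-flow on the bipartite network with source-to-$j'$ capacity $\mathbf{1}^T f_{j'}$, internal edges $j'\to i$ of infinite capacity for $i\in\partial(j')$, and $i$-to-sink capacity $\sum_{j'}f_{j'i} + M\alpha_i$. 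Any finite cut is indexed by $(J,A) = (V_D\cap S,\,V_S\cap S)$ with $A\supseteq\partial(J)$; the minimum over such cuts is attained at $A=\partial(J)$ and evaluates to $1 - D_J(\mathbf{f}) + MB_J$, which is $\geq 1$ for every $J$ exactly by the choice of $M$. The max-flow-min-cut theorem then delivers $d^*$.

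For ingredient two, I would define the tilted distribution $\tilde\phi^{(J)}_{j'k}\triangleq \phi_{j'k}(\lambda_J/\mu_J)$ if $j'\in J,\,k\notin\partial(J)$; $\phi_{j'k}(\mu_J/\lambda_J)$ if $j'\notin J,\,k\in\partial(J)$; and $\phi_{j'k}$ otherwise. A direct computation shows $\sum_{j',k}\tilde\phi^{(J)}_{j'k} = 1$; in fact $r = \lambda_J/\mu_J$ is the unique nontrivial root of the quadratic normalization $\mu_J r^2 - (\mu_J+\lambda_J)r + \lambda_J = 0$, which motivates the form of $\mathbf{f}^*$ stated in the lemma. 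Then the algebraic identity
\begin{align*}
D_{\mathrm{KL}}\bigl(\mathbf{f}\,\|\,\tilde\phi^{(J)}\bigr) \;=\; \Lambda^*(\mathbf{f}) - \log(\lambda_J/\mu_J)\,D_J(\mathbf{f})
\end{align*}
yields the desired bound by Gibbs' inequality, with equality iff $\mathbf{f}=\tilde\phi^{(J)}$; in particular $\Lambda^*(\tilde\phi^{(J)}) = (\lambda_J-\mu_J)\log(\lambda_J/\mu_J)$ and $D_J(\tilde\phi^{(J)}) = \lambda_J-\mu_J$.

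Combining the two ingredients proves the lemma. For the lower bound on $\gamma_{\textup{\tiny CB}}(\alpha)$, any $\mathbf{f}$ with $\Lambda^*(\mathbf{f})<\infty$ and $v_\alpha(\mathbf{f})>0$ must have $J^\dagger\in\textup{argmax}_J D_J(\mathbf{f})/B_J$ lying in $\mathcal{J}$ (otherwise $\mathbf{f}$ charges positive mass on some $(j',k)$ with $\phi_{j'k}=0$, forcing $\Lambda^*(\mathbf{f})=\infty$), so $\Lambda^*(\mathbf{f})/v_\alpha(\mathbf{f}) = B_{J^\dagger}\Lambda^*(\mathbf{f})/D_{J^\dagger}(\mathbf{f}) \geq B_{J^\dagger}\log(\lambda_{J^\dagger}/\mu_{J^\dagger}) \geq \min_{J\in\mathcal{J}}B_J\log(\lambda_J/\mu_J)$. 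For the matching upper bound and to exhibit the infimizer, take $J^*\in\textup{argmin}_{J\in\mathcal{J}}B_J\log(\lambda_J/\mu_J)$ and set $\mathbf{f}^*=\tilde\phi^{(J^*)}$; by ingredient one $v_\alpha(\mathbf{f}^*)\geq(\lambda_{J^*}-\mu_{J^*})/B_{J^*}$, so $\Lambda^*(\mathbf{f}^*)/v_\alpha(\mathbf{f}^*) \leq B_{J^*}\log(\lambda_{J^*}/\mu_{J^*})$. The main technical obstacle will be the max-flow/min-cut step in the $\leq$ direction of ingredient one --- one must carefully verify that arbitrary subsets $A\subseteq V_S$ arising in a finite cut reduce without loss of generality to the canonical form $A=\partial(J)$ for some $J\subseteq V_D$, so that the collection $\{D_J/B_J\}_{J\subseteq V_D}$ fully encodes feasibility.
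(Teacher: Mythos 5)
Your proposal is correct, and it reaches the result by a genuinely different route than the paper in two places. First, the paper never computes $v_{\alpha}(\mathbf{f})$ directly from the polytope $\mathcal{X}_{\mathbf{f}}$: it instead obtains the max-over-subsets drift formula $\bar{v}(\mathbf{f})=\max_{S\subseteq V_S}\textup{gap}_S(\mathbf{f})/\mathbf{1}_S^T\alpha$ as a consequence of the SMW steepest-descent identity (Lemma \ref{lem:lyapunov_derivative}), works with the achievability quantity $\gamma_{\textup{\tiny AB}}(\alpha)$ of Lemma \ref{lem:lower_bound_vj}, and then reduces arbitrary $S\subseteq V_S$ to $S=\partial(J)$, $J\in\mathcal{J}$; the identification with $\gamma_{\textup{\tiny CB}}(\alpha)$ rides on Proposition \ref{prop:tight_converse}. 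Your max-flow/min-cut (Gale--Hoffman feasibility) argument proves the same formula for $v_{\alpha}(\mathbf{f})$ policy-free, directly matching the definition of $\gamma_{\textup{\tiny CB}}$ in Lemma \ref{lem:point_wise_converse}; your weighted-average bound $\max_i(-\Delta x_i/\alpha_i)\geq -\sum_{i\in\partial(J)}\Delta x_i/B_J$ is exactly the paper's mechanism for the easy direction, and your cut computation $1-D_J(\mathbf{f})+MB_J\geq 1$ is the correct feasibility condition for the hard direction. Second, for the inner optimization over $\mathbf{f}$ the paper forms a Lagrangian, passes to the Legendre dual $\Lambda(\cdot)$, and solves a quadratic in $e^{g(\phi,J)}$ to extract the multiplier $g(\phi,J)=\log(\lambda_J/\mu_J)$ and then the minimizer from first-order conditions; you instead posit the tilted measure $\tilde\phi^{(J)}$ and verify everything in one line through the identity $D_{\textup{KL}}(\mathbf{f}\,\|\,\tilde\phi^{(J)})=\Lambda^*(\mathbf{f})-\log(\lambda_J/\mu_J)\,D_J(\mathbf{f})$ plus Gibbs' inequality (the normalization check $\mu_J(\lambda_J/\mu_J)+\lambda_J(\mu_J/\lambda_J)=\mu_J+\lambda_J$ is the same quadratic in disguise). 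What each approach buys: the paper's route reuses machinery already needed for achievability and explains where the tilting comes from; yours is self-contained, avoids any appeal to the SMW dynamics or to $\gamma_{\textup{\tiny AB}}=\gamma_{\textup{\tiny CB}}$, and gets uniqueness of the per-$J$ minimizer for free. Your handling of the edge cases (ruling out $J\notin\mathcal{J}$ via $\Lambda^*(\mathbf{f})=\infty$, and $\lambda_J>\mu_J$ from Assumption \ref{asm:strict_hall}) is also sound.
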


\begin{rem}[Critical subset property]
\label{rem:critical-subset}
Lemma \ref{lem:explicit_gamma} provides the most likely service token sample path that leads to throughput loss under \textit{any} scheduling policy that is exponent optimal, starting at state\footnote{Remark~\ref{rem:critical-subset} applies to throughput lost over a (long) finite horizon given starting state $\balpha$. SMW$(\balpha)$ further forces the state to return to $\balpha$ (negative drift), so our observations carry over to the steady state as well under that policy.} $\balpha$.
We observe the \textit{critical subset property}: 
\begin{compactitem}[leftmargin=*]
 \item (Adversary's strategy) For each starting state $\balpha\in\textup{relint}(\Omega)$, there is (are) corresponding critical subset(s) $J^* \in \textup{argmin}_{J\in\mathcal{J}}	B_J\log(\lambda_J/\mu_J)
$, such that the most likely service token sample path forcing throughput loss drains a critical subset.
 \item (Controller's strategy) If the current state $\bx$ is on the most likely sample path forcing throughput loss in critical subset $J^*$ starting at $\balpha$, an exponent optimal policy (for given $\balpha$) will maximally protect $J^*$ at $\bx$, {i.e., the policy will serve jobs in $\partial(J^*)$ exclusively using service tokens originating in $J^*$}. Lemma~\ref{lem:lyapunov_derivative} tells us that SMW$(\balpha)$ is such a policy.
\end{compactitem}
\end{rem}
We can now prove the main theorem.

\begin{proof}[\textbf{Proof of Theorem \ref{thm:main_tight}}]
Lemma \ref{lem:point_wise_converse} along with the explicit expression for 	$\gamma_{{\textup{\tiny CB}}}(\balpha)$ provided by Lemma~\ref{lem:explicit_gamma} yields the converse result (part 2 of the theorem).

Achievability (part 1 of the theorem) follows from Corollary~\ref{cor:smw_exp_opt} along with the explicit expression for 	$\gamma_{{\textup{\tiny CB}}}(\balpha)$ provided by Lemma~\ref{lem:explicit_gamma}.
\end{proof}

\section{Buffer Space Planning and Dynamic Scheduling in Open Networks}\label{sec:open_network}

In this section, we consider a parallel-server system \cite[see, e.g.,][]{mandelbaum2004scheduling,shi2015process} with fixed total buffer space $K$, and a queue associated with each job type. 
At the beginning, the system controller needs to make a (static) buffer space planning decision which determines the buffer size of each queue. 
Then the controller needs to dynamically schedule waiting jobs to compatible servers, with the objective of minimizing the steady-state buffer overflow probability (of any queue).
We show that with only slight modifications, our model and results translate fully to this open network model, thus providing novel prescriptive insights into buffer space planning and dynamic scheduling control of such systems. 

\subsection{The parallel-server system}
We now provide a detailed description of the model.

\textbf{Parallel servers.} The set of graph primitives is the same as in the previous model, i.e., it consists of a compatibility graph $G(V_B\cup V_S,E)$. We let $m=|V_B|,n=|V_S|$.
Here $V_B$ is the finite set of buffers, one associated with each job type, and $V_S$ is the finite set of servers.
The neighborhood of $i\in V_B$ in $G$ consists of the servers that can serve buffer $i$, which is denoted by $\partial(i)\subseteq V_S$.
The neighborhood of $j'\in V_S$ in $G$ consists of the buffers that server $j'$ can serve, which is denoted by $\partial(j') \subseteq V_B$.
Type $i$ jobs arrive at buffer $i\in V_B$ according to a Poisson process with rate $\lambda_i$. Type $j'$ service tokens arrive at server $j'\in V_S$ according to a Poisson process with rate $\mu_{j'}$.

\textbf{Finite buffers with fixed total size.} 
The sum of the buffer sizes of the $m$ queues is given by $K$ in the $K$-th system.
At the beginning, the system operator needs to decide the buffer size of each queue, denoted by $\alpha_i K$ for queue $i$, subject to $\sum_{i\in V_B}\alpha_i=1$.
Denote the queue lengths at time $t$ as $\bX^{K}(t)=[X_1^{K}(t),\cdots,X_{m}^{K}(t)]$.
If a buffer is full, the jobs arriving to it will be lost.

\textbf{Dynamic scheduling.} 
After making the static decision of buffer sizing, the system controller's control lever is \emph{dynamic scheduling}: when a service token of type $j'$ arrives, the controller chooses the queue it serves. 
If all compatible buffers are empty, then the service token is lost.
As in the previous model, it suffices to consider stationary policies $U$, which is formally defined as a sequence of mappings, indexed by the total buffer size $K$, that map the current queue length $\bX^{K}$ and service token type $j'\in V_D$ to $\partial(j') \cup \{\emptyset\}$.
Let $t_{r}$ be the $r$-th service token arrival epoch after time $0$. Denote the state of the system just before $t_r$ by $\X^{K}(t_r^{-})$ (the initial state is $\X^{\N}(0)$).
Now suppose the controller uses a scheduling policy $U$, and
the $r$-th service token is of type $o[r]$.
Let $S[r] \triangleq U^{K}[\X^{K}(t_r^-)](o[r])$ be the chosen queue (potentially $\emptyset$). Formally, when a service token arrives,
\begin{equation*}
\X^{\N}(t_r)
	\triangleq
\left\{
\begin{array}{ll}
\X^{\N}(t_r^-) - \e_{S[r]}  &\text{if $S[r]\in V_S\, ,$}\\
\X^{\N}(t_r^-)&\text{if $S[r]=\emptyset\, .$}
\end{array}
\right.
\end{equation*}

\textbf{Performance measure.}
We consider a system controller who tries to minimize the buffer overflow probability. 
We define the performance measure in a similar way as in \eqref{eq:lb_performance_measure}:
\begin{equation}\label{eq:lb_performance_measure_open}
{\bbp}^{\N,U}
\triangleq
\lim_{T\to\infty}\frac{1}{T}\ 
\mathbb{E}\left(
\int_{t=0}^{T}
\ind\left\{\X^{\N,U}(s) \geq \lfloor\alpha_{i}K \rfloor \right\}
\textup{ds}
\right)\, ,
\end{equation}

Similarly, for policy $U$, we define \emph{buffer-overflow exponent} $\gamma(U)$ as below:
\begin{equation}\label{eq:lb_measure_rate_open}
\gamma(U) \triangleq -\limsup_{\N\to\infty}\frac{1}{\N}\log {\bbp}^{\N,U}\, ,
\end{equation}

\textbf{Complete Resource Pooling condition (for parallel-server systems).}
We require the following CRP condition on the network primitives $G$, $\blambda$ and $\bmu$ for our main result in this section.
\begin{asm}\label{asm:strict_hall_open}
We assume that for all subsets $I\subset V_B$ where $I\neq\emptyset$, it holds that
$
\lambda_I < \mu_I$ 
	for  ${\lambda_I} \triangleq \sum_{i\in I}\lambda_{i}$ and $\mu_I \triangleq \sum_{j'\in \partial(I)}\mu_{j'}\, .$
\end{asm}
Intuitively, Assumption \ref{asm:strict_hall_open} assumes that for each subset $I\subsetneq V_B$ of buffers, the (optimistic) service rate of jobs in $I$ is faster than the job arrival rate to $I$.

\subsection{Optimal buffer sizing, SMW, and main result}
Leveraging the similarity between the current model and the previous model introduced in Section \ref{sec:model}, we show that for this model SMW also achieves exponentially decaying job loss.
Crucially, if the (static) buffer sizing decision is $\balpha\in \textup{relint}(\Omega)$, it suffices to consider the SMW policy with the same parameter $\balpha$.
The formal definition of SMW in this setting is as follows:
\begin{defn}[SMW($\balpha$) for parallel-server systems]
Fix $\balpha \in \textup{relint}(\Omega)$, i.e., $\balpha \in \mathbb{R}^m$ such that $\alpha_i > 0 \ \forall i \in V_S$ and $\sum_{i \in V_S} \alpha_i =1$.	Given system state $\X(t_r^-)$ just before the $r$-th service token and for demand with type $j'$, \textup{SMW}($\balpha$) serves queue
	\begin{equation*}
	\textup{argmax}_{k\in \partial(j')}\frac{X_k(t_r^-)}{\alpha_k}
	\end{equation*}
	if $X_i(t_r^-)>0$; otherwise the request is lost.
	(If there are ties when determining the argmax, it assigns from the queue with highest index.) \yka{Is this $\alpha$ vector the same as the $\alpha$ which defines the buffer sizes? If so, what's the reason it suffices to identify the 2 $\alpha$s? Please add some commentary and please make sure the reader leaves with a clear understanding of what you find regarding optimal buffer sizing. Right now it's a bit buried.}
\end{defn}

The following performance guarantee similar to Theorem \ref{thm:main_tight} holds for SMW($\balpha$) under the CRP condition (Assumption \ref{asm:strict_hall_open}).

\begin{thm}[Result for parallel-server systems]\label{thm:main_tight_open}
	For any parallel-server system $(G, \blambda,\bmu)$ satisfying Assumption \ref{asm:strict_hall_open}, we have:
	\begin{compactenum}[label=\arabic*.,leftmargin=*]
		\item \textbf{Exponentially small loss under any SMW policy}: For any $\balpha \in\textup{relint}(\Omega)$, if the buffer size of queue $i$ is chosen to be $\alpha_i K$, \textup{SMW($\balpha $)} achieves exponential decay of the buffer-overflow probability in $K$ with exponent,
		\begin{align}
		\gamma(\balpha )
		=
		\min_{I\subset V_S, I \neq \emptyset}
		B_I
		\log\left(
		\frac{\mu_I}{\lambda_I}
		\right)
		>0\, ,
		\end{align}
		\begin{align}
		\textup{where}\quad
		B_I\triangleq \mathbf{1}_{I}^{\textup{T}}\balpha\, ,\qquad
		\lambda_I \triangleq \sum_{i\in I}\lambda_{i}
		\, ,\quad \textup{and} \quad
		\mu_I \triangleq \sum_{j'\in \partial(I)}\mu_{j'}
		\, .\label{eq:lambdaJ_defn_scrip}
		\end{align}
		\yka{Should we write $\gamma(\alpha)$ with $B_J, \lambda_J, \mu_J$? It may help in the exposition.}
		\item \textbf{There is an exponent optimal SMW policy}: Under \emph{any} buffer sizing and scheduling policy $U$, it must be that
		\begin{equation}
		\gamma(U) \leq \bar{\gamma} \, ,
		\qquad
		\textup{where}\quad\bar{\gamma} = \sup_{\balpha \in\textup{relint}(\Omega)} \gamma(\balpha ) \, .
		\end{equation}
	\end{compactenum}
	Thus, there is a buffer sizing decision and corresponding \textup{SMW} rule that achieves an exponent arbitrarily close to the optimal one.
\end{thm}

The proof of Theorem \ref{thm:main_tight_open} is almost identical to that of Theorem \ref{thm:main_tight}, hence we do not write a separate proof for it, and limit ourselves to summarizing how the proof of Theorem \ref{thm:main_tight} can be reused:
Part 1 of Theorem \ref{thm:main_tight_open} states that given a buffer sizing decision $\balpha$, SMW($\balpha$) maximizes the buffer-overflow exponent. 
To establish this, we define Lyapunov function $L_{\balpha}(\bx) = \max_{i\in V_B}\frac{x_i}{\alpha_i}$ which achieves its minimum at where queues are identically zero, and achieves maximum if and only if some buffer is full. 
Similar to the proof of Theorem \ref{thm:main_tight}, we can show that SMW($\alpha$) executes steepest descent on this Lyapunov function, hence it is exponent-optimal.
We then optimize over $\balpha$ and obtain the optimal buffer sizing decision, thus proving part 2 of Theorem \ref{thm:main_tight_open}.

\section{Discussion}\label{sec:conclu}
In this paper we study dynamic scheduling control of a closed queueing network.
We introduce a family of state-dependent scheduling policies called Scaled MaxWeight (SMW) and prove that they have superior performance in terms of maximizing throughput, compared with state-independent policies including previously proposed policies.
In particular, we construct an SMW policy that (almost) achieves the optimal large deviation rate of decay of throughput loss.
Our analysis also uncovers the structure of the problem: given system state, throughput loss is most likely to happen within state-dependent critical subsets of servers.
The optimal SMW policy protects all critical subsets simultaneously.
SMW policies are simple and explicit, and hence have the potential to influence practice in applications such as shared transportation systems.
Our work provides the first large deviations analysis under complete resource pooling, yielding sharp control insights. We also show how the methodology we introduce may inspire similar analyses in open networks, e.g., obtaining exponent optimal controls when there is a shared finite buffer for multiple queues.

{
\bibliographystyle{ormsv080}
\bibliography{bib_closed_qnet}{}

\begin{thebibliography}{38}
\expandafter\ifx\csname natexlab\endcsname\relax\def\natexlab#1{#1}\fi
\expandafter\ifx\csname url\endcsname\relax
  \def\url#1{{\tt #1}}\fi
\expandafter\ifx\csname urlprefix\endcsname\relax\def\urlprefix{URL }\fi
\expandafter\ifx\csname urlstyle\endcsname\relax
  \expandafter\ifx\csname doi\endcsname\relax
  \def\doi#1{doi:\discretionary{}{}{}#1}\fi \else
  \expandafter\ifx\csname doi\endcsname\relax
  \def\doi{doi:\discretionary{}{}{}\begingroup \urlstyle{rm}\Url}\fi \fi

\bibitem[{Adan and Weiss(2012)}]{adan2012loss}
Adan, Ivo, Gideon Weiss. 2012.
\newblock A loss system with skill-based servers under assign to longest idle
  server policy.
\newblock {\it Probability in the Engineering and Informational Sciences\/}
  {\bf 26}(3) 307--321.

\bibitem[{Atar et~al.(2003)Atar, Dupuis, and Shwartz}]{atar2003escape}
Atar, Rami, Paul Dupuis, Adam Shwartz. 2003.
\newblock An escape-time criterion for queueing networks: asymptotic
  risk-sensitive control via differential games.
\newblock {\it Mathematics of Operations Research\/} {\bf 28}(4) 801--835.

\bibitem[{Balseiro et~al.(2021)Balseiro, Brown, and Chen}]{balseiro2019dynamic}
Balseiro, Santiago~R, David~B Brown, Chen Chen. 2021.
\newblock Dynamic pricing of relocating resources in large networks.
\newblock {\it Management Science\/} {\bf 67}(7) 4075--4094.

\bibitem[{Banerjee et~al.(2021)Banerjee, Freund, and
  Lykouris}]{banerjee2016pricing}
Banerjee, Siddhartha, Daniel Freund, Thodoris Lykouris. 2021.
\newblock Pricing and optimization in shared vehicle systems: An approximation
  framework.
\newblock {\it Operations Research\/} .

\bibitem[{Bertsekas(1995)}]{bertsekas1995dynamic}
Bertsekas, Dimitri~P. 1995.
\newblock {\it Dynamic programming and optimal control\/}, vol.~1.
\newblock Athena scientific Belmont, MA.

\bibitem[{Bertsimas et~al.(2001)Bertsimas, Gamarnik, and
  Tsitsiklis}]{bertsimas2001performance}
Bertsimas, Dimitris, David Gamarnik, John~N Tsitsiklis. 2001.
\newblock Performance of multiclass markovian queueing networks via piecewise
  linear lyapunov functions.
\newblock {\it Annals of Applied Probability\/}  1384--1428.

\bibitem[{Blanchet(2013)}]{blanchet2013optimal}
Blanchet, Jose. 2013.
\newblock Optimal sampling of overflow paths in jackson networks.
\newblock {\it Mathematics of Operations Research\/} {\bf 38}(4) 698--719.

\bibitem[{Braverman et~al.(2019)Braverman, Dai, Liu, and
  Ying}]{braverman2016empty}
Braverman, Anton, Jim~G Dai, Xin Liu, Lei Ying. 2019.
\newblock Empty-car routing in ridesharing systems.
\newblock {\it Operations Research\/} {\bf 67}(5) 1437--1452.

\bibitem[{Bumpensanti and Wang(2020)}]{bumpensanti2018re}
Bumpensanti, Pornpawee, He~Wang. 2020.
\newblock A re-solving heuristic with uniformly bounded loss for network
  revenue management.
\newblock {\it Management Science\/} {\bf 66}(7) 2993--3009.

\bibitem[{Bu{\v s}i{\'c} and Meyn(2015)}]{buyic2015approximate}
Bu{\v s}i{\'c}, Ana, Sean Meyn. 2015.
\newblock Approximate optimality with bounded regret in dynamic matching
  models.
\newblock {\it ACM SIGMETRICS Performance Evaluation Review\/} {\bf 43}(2)
  75--77.

\bibitem[{Caldentey et~al.(2009)Caldentey, Kaplan, and
  Weiss}]{caldentey2009fcfs}
Caldentey, Ren{\'e}, Edward~H Kaplan, Gideon Weiss. 2009.
\newblock Fcfs infinite bipartite matching of servers and customers.
\newblock {\it Advances in Applied Probability\/} {\bf 41}(3) 695--730.

\bibitem[{Chung(2001)}]{chung2001course}
Chung, Kai~Lai. 2001.
\newblock {\it A course in probability theory\/}.
\newblock Academic press.

\bibitem[{Dai(1995)}]{dai1995positive}
Dai, Jim~G. 1995.
\newblock On positive harris recurrence of multiclass queueing networks: a
  unified approach via fluid limit models.
\newblock {\it The Annals of Applied Probability\/}  49--77.

\bibitem[{Dai and Lin(2005)}]{dai2005maximum}
Dai, Jim~G, Wuqin Lin. 2005.
\newblock Maximum pressure policies in stochastic processing networks.
\newblock {\it Operations Research\/} {\bf 53}(2) 197--218.

\bibitem[{Dai and Lin(2008)}]{dai2008asymptotic}
Dai, Jim~G, Wuqin Lin. 2008.
\newblock Asymptotic optimality of maximum pressure policies in stochastic
  processing networks.
\newblock {\it The Annals of Applied Probability\/} {\bf 18}(6) 2239--2299.

\bibitem[{Dembo and Zeitouni(1998)}]{dembo1998large}
Dembo, Amir, Ofer Zeitouni. 1998.
\newblock {\it Large deviations techniques and applications\/}, {\it
  Application of Mathematics\/}, vol.~38.
\newblock 2nd ed. Springer.

\bibitem[{D{\'e}sir et~al.(2016)D{\'e}sir, Goyal, Wei, and
  Zhang}]{desir2016sparse}
D{\'e}sir, Antoine, Vineet Goyal, Yehua Wei, Jiawei Zhang. 2016.
\newblock Sparse process flexibility designs: is the long chain really optimal?
\newblock {\it Operations Research\/} {\bf 64}(2) 416--431.

\bibitem[{Durrett(2010)}]{durrett2010probability}
Durrett, Rick. 2010.
\newblock {\it Probability: theory and examples\/}.
\newblock Cambridge university press.

\bibitem[{Eryilmaz and Srikant(2012)}]{eryilmaz2012asymptotically}
Eryilmaz, Atilla, R~Srikant. 2012.
\newblock Asymptotically tight steady-state queue length bounds implied by
  drift conditions.
\newblock {\it Queueing Systems\/} {\bf 72}(3-4) 311--359.

\bibitem[{Gallego and Van~Ryzin(1994)}]{gallego1994optimal}
Gallego, Guillermo, Garrett Van~Ryzin. 1994.
\newblock Optimal dynamic pricing of inventories with stochastic demand over
  finite horizons.
\newblock {\it Management science\/} {\bf 40}(8) 999--1020.

\bibitem[{Harrison and Wein(1990)}]{harrison1990scheduling}
Harrison, J~Michael, Lawrence~M Wein. 1990.
\newblock Scheduling networks of queues: Heavy traffic analysis of a
  two-station closed network.
\newblock {\it Operations research\/} {\bf 38}(6) 1052--1064.

\bibitem[{Jasin and Kumar(2012)}]{jasin2012re}
Jasin, Stefanus, Sunil Kumar. 2012.
\newblock A re-solving heuristic with bounded revenue loss for network revenue
  management with customer choice.
\newblock {\it Mathematics of Operations Research\/} {\bf 37}(2) 313--345.

\bibitem[{Jordan and Graves(1995)}]{jordan1995principles}
Jordan, William~C, Stephen~C Graves. 1995.
\newblock Principles on the benefits of manufacturing process flexibility.
\newblock {\it Management Science\/} {\bf 41}(4) 577--594.

\bibitem[{Maguluri and Srikant(2016)}]{maguluri2016heavy}
Maguluri, Siva~Theja, R~Srikant. 2016.
\newblock Heavy traffic queue length behavior in a switch under the maxweight
  algorithm.
\newblock {\it Stochastic Systems\/} {\bf 6}(1) 211--250.

\bibitem[{Maguluri et~al.(2012)Maguluri, Srikant, and
  Ying}]{maguluri2012stochastic}
Maguluri, Siva~Theja, Rayadurgam Srikant, Lei Ying. 2012.
\newblock Stochastic models of load balancing and scheduling in cloud computing
  clusters.
\newblock {\it 2012 Proceedings IEEE Infocom\/}. IEEE, 702--710.

\bibitem[{Mairesse and Moyal(2016)}]{mairesse2016stability}
Mairesse, Jean, Pascal Moyal. 2016.
\newblock Stability of the stochastic matching model.
\newblock {\it Journal of Applied Probability\/} {\bf 53}(4) 1064--1077.

\bibitem[{Majewski and Ramanan(2008)}]{majewski2008large}
Majewski, Kurt, Kavita Ramanan. 2008.
\newblock How large queue lengths build up in a jackson network.
\newblock {\it preprint\/} .

\bibitem[{Mandelbaum and Stolyar(2004)}]{mandelbaum2004scheduling}
Mandelbaum, Avishai, Alexander~L Stolyar. 2004.
\newblock Scheduling flexible servers with convex delay costs: Heavy-traffic
  optimality of the generalized c$\mu$-rule.
\newblock {\it Operations Research\/} {\bf 52}(6) 836--855.

\bibitem[{{\"O}zkan and Ward(2020)}]{ozkan2016dynamic}
{\"O}zkan, Erhun, Amy~R Ward. 2020.
\newblock Dynamic matching for real-time ride sharing.
\newblock {\it Stochastic Systems\/} {\bf 10}(1) 29--70.

\bibitem[{Shi et~al.(2019)Shi, Wei, and Zhong}]{shi2015process}
Shi, Cong, Yehua Wei, Yuan Zhong. 2019.
\newblock Process flexibility for multiperiod production systems.
\newblock {\it Operations Research\/} {\bf 67}(5) 1300--1320.

\bibitem[{Stolyar(2003)}]{stolyar2003control}
Stolyar, Alexander~L. 2003.
\newblock Control of end-to-end delay tails in a multiclass network: Lwdf
  discipline optimality.
\newblock {\it Annals of Applied Probability\/}  1151--1206.

\bibitem[{Stolyar(2004)}]{stolyar2004maxweight}
Stolyar, Alexander~L. 2004.
\newblock Maxweight scheduling in a generalized switch: State space collapse
  and workload minimization in heavy traffic.
\newblock {\it The Annals of Applied Probability\/} {\bf 14}(1) 1--53.

\bibitem[{Stolyar and Ramanan(2001)}]{stolyar2001largest}
Stolyar, Alexander~L, Kavita Ramanan. 2001.
\newblock Largest weighted delay first scheduling: Large deviations and
  optimality.
\newblock {\it Annals of Applied Probability\/}  1--48.

\bibitem[{Talluri and Van~Ryzin(2006)}]{talluri2006theory}
Talluri, Kalyan~T, Garrett~J Van~Ryzin. 2006.
\newblock {\it The theory and practice of revenue management\/}, vol.~68.
\newblock Springer Science \& Business Media.

\bibitem[{Tassiulas and Ephremides(1992)}]{tassiulas1992stability}
Tassiulas, Leandros, Anthony Ephremides. 1992.
\newblock Stability properties of constrained queueing systems and scheduling
  policies for maximum throughput in multihop radio networks.
\newblock {\it IEEE transactions on automatic control\/} {\bf 37}(12)
  1936--1948.

\bibitem[{Tsitsiklis and Xu(2013)}]{tsitsiklis2013power}
Tsitsiklis, John~N, Kuang Xu. 2013.
\newblock On the power of (even a little) resource pooling.
\newblock {\it Stochastic Systems\/} {\bf 2}(1) 1--66.

\bibitem[{Venkataramanan and Lin(2013)}]{venkataramanan2013queue}
Venkataramanan, VJ, Xiaojun Lin. 2013.
\newblock On the queue-overflow probability of wireless systems: A new approach
  combining large deviations with lyapunov functions.
\newblock {\it IEEE Transactions on Information Theory\/} {\bf 59}(10)
  6367--6392.

\bibitem[{Waserhole and Jost(2016)}]{waserhole2016pricing}
Waserhole, Ariel, Vincent Jost. 2016.
\newblock Pricing in vehicle sharing systems: Optimization in queuing networks
  with product forms.
\newblock {\it EURO Journal on Transportation and Logistics\/} {\bf 5}(3)
  293--320.

\end{thebibliography}
}
\appendix

\newpage

\vskip35pt
\begin{center}
  \Large {\bf Appendix}
\end{center}
\vskip10pt

\noindent This technical appendix is organized as follows.
We prove our main result, Theorem \ref{thm:main_tight}, in Appendices~\ref{appen:sec:fsp}-\ref{appen:sec:smw}. In particular:
\begin{compactitem}[leftmargin=*]
    \item Appendix~\ref{appen:sec:fsp}
    establishes key properties of our Lyapunov functions, including the proof of Lemma \ref{lem:key_property_lyap}.
      \item Appendix~\ref{appen:sec:point_wise_converse} includes the proof of Lemma
      \ref{lem:point_wise_converse}, a converse bound on the throughput-loss exponent.
      \item Appendix~\ref{appen:sec:sufficient} includes the proof of Proposition \ref{prop:tight_converse}, containing sufficient conditions for a policy to achieve the optimal exponent.
      \item Appendix~\ref{appen:sec:smw} shows that the SMW policy satisfies the sufficient conditions for exponent optimality, and derives explicitly the optimal exponent and most-likely sample paths, including the proofs of Lemma \ref{lem:lyapunov_derivative}, Lemma \ref{lem:lyapunov_derivative_fluid}, and Lemma \ref{lem:explicit_gamma}. 
\end{compactitem}
      %
      %

Appendix~\ref{appen:sec:crp} shows the necessity of the assumptions and state-dependent control, including the proofs of Propositions \ref{prop:NT-is-necessary}, \ref{prop:hall_is_necessary} and \ref{prop:state_ind_no_exp}. 

\section{Properties of the Lyapunov functions $L_{\balpha}(\bx)$}\label{appen:sec:fsp}
\yka{Awkward title. Also: Typically, each appendix section should be pointed to in the main text.}

\subsection{Scale-invariance and sub-additivity (about $\balpha$): proof of Lemma \ref{lem:key_property_lyap}}
\begin{proof}[Proof of Lemma \ref{lem:key_property_lyap}]
	(i) For $c>0$, $\balpha\in\text{relint}(\Omega)$, we have
	\begin{align*}
	L_{\balpha}(\balpha + c\Delta \bx)
	= 1 - \min_{i}\frac{\alpha_i + c\Delta x_i}{\alpha_i}
	= - \min_{i}\frac{c\Delta x_i}{\alpha_i}
	= - c\min_{i}\frac{\Delta x_i}{\alpha_i}
	= c L_{\balpha}(\balpha + \Delta \bx)\, .
	\end{align*}
	
	\noindent (ii) For $\balpha\in\text{relint}(\Omega)$, we have
	\begin{align*}
	&L_{\balpha}(\balpha + \Delta \bx + \Delta \bx')
	= 1 - \min_{i}\frac{\alpha_i + \Delta x_i + \Delta x_i'}{\alpha_i}
	=  - \min_{i}\frac{\Delta x_i + \Delta x_i'}{\alpha_i}\\
	\leq\ &
	- \min_{i}\frac{\Delta x_i}{\alpha_i}
	-
	\min_{i}\frac{\Delta x_i'}{\alpha_i}
	=
	L_{\balpha}(\balpha + \Delta \bx)
	+
	L_{\balpha}(\balpha + \Delta \bx')\, .
	\end{align*}
	
\end{proof}

\subsection{Regularity properties}
The following lemma is a collection of regularity properties of $L_{\balpha}(\bx)$ that are useful in the following proofs.
\begin{lem}\label{lem:tech_lems}
	For $\balpha\in\textup{relint}(\Omega)$ and $\V_{\balpha}(\bx)$ specified in Definition \ref{defn:lyap_func},  we have
	\begin{compactenum}[label=\arabic*.,leftmargin=*]
		\item $\V_{\balpha}(\bx)\geq 0$ for all $\bx\in \Omega$, and $\V_{\balpha}(\bx)=0$ if and only if $\bx=\balpha$.
		\item $\V_{\balpha}(\bx)$ is globally Lipschitz on $\Omega$, i.e. for any $\bx_1,\bx_2\in\Omega$, we have
		\begin{equation*}
		|\V_{\balpha}(\bx_1)-\V_{\balpha}(\bx_2)|\leq \frac{1}{\min_{i}\alpha_i}||\bx_1 - \bx_2||_{\text{\tiny $\infty$}}\, .
		\end{equation*}
	\end{compactenum}
\end{lem}
\begin{proof}[Proof of Lemma \ref{lem:tech_lems}]
	Property 1 is easy to verify hence we omit the proof.
	
	For property 2, note that
	\begin{align*}
		|L_{\balpha}(\bx_1)-L_{\balpha}(\bx_2)|
		=\ 
		\left|\min_i \frac{x_{1,i}}{\alpha_i} - \min_i \frac{x_{2,i}}{\alpha_i} \right| 
		\leq\ 
		\min_{i}\frac{|x_{1,i}-x_{2,i}|}{\alpha_i}
		\leq\ 
		\frac{1}{\min_i \alpha_i} ||\bx_1 - \bx_2||_{\infty}\, .
	\end{align*}
	
\end{proof}
\yka{Please include the proof of the last property.}

\section{Converse bound on the exponent: proof of Lemma~\ref{lem:point_wise_converse}}\label{appen:sec:point_wise_converse}
In this section, we prove Lemma \ref{lem:point_wise_converse}, the converse bound on the exponent for any policy $U\in\cU$.
The proof consists of three steps:
\begin{compactitem}[leftmargin=*]
	\item Step 1: For each stationary policy $U\in \cU$ we define a state $\tilde{\balpha}\in\textup{relint}(\Omega)$ such that the state visits the neighborhood of $\tilde{\balpha}$ frequently enough. In the following steps we will bound the throughput-loss exponent of $U$ by $\gamma_{\textup{\tiny CB}}(\tilde{\balpha})$.
	\item Step 2: Given that the system's initial state is close to $\tilde{\balpha}$, we construct a set of service token sample paths that are guaranteed to lead to a throughput loss regardless of the policy used.
	To this end, we compute $v_{\tilde{\balpha}}(\mathbf{f})$, which the minimum rate of increase of $L_{\tilde{\balpha}}(\cdot)$ under service token arrival rates $\mathbf{f}$ no matter the scheduling distributions. This step is used to lower bound the ``one-shot'' probability of throughput loss.
	\item Step 3: We use renewal-reward theorem to translate the one-shot throughput-loss probability to steady-state throughput-loss probability. The final bound in \eqref{eq:rate_cb_bound} takes the supremum over $\balpha$ since the policy can choose its resting state.
\end{compactitem}
The technique used in step 2 follows from Proposition 9 in \cite{venkataramanan2013queue}. The approach in steps 1 and 3 is novel (to the best of our knowledge) and tackles the key challenge of our closed network model, i.e., the policy has the flexibility to choose a resting state, as opposed to open network settings where the resting state is always $\bzero$.

\begin{proof}[Proof of Lemma~\ref{lem:point_wise_converse}]
	\noindent \textit{Step 1: Find the ``frequently visited'' state $\balpha$.}
	Fix a stationary policy $U\in\cU$.
	For each $K$, the $K$-th system under policy $U$ is a finite-state Markov chain, whose state space has cardinality smaller than $K^m$.
	Since we are deriving the converse bound, let the $K$-th system start within a communication class that minimizes steady state throughput loss among all communication classes.
	Denote the stationary distribution (henceforth it refers to the stationary distribution of the communication class where the initial state belongs to) of (normalized) states as $\pi^K(\fKX)$.
	Then there must exist a (normalized) state $\tilde{\bX}^K$ such that $\pi^K(\tilde{\bX}^K) \geq K^{-m}$.
	Take a subsequence $\{K_r\}$ of $\{K\}$ such that
	\begin{align*}
		\lim_{r\to\infty}\frac{1}{K_r}\log\mathbb{P}^{K_r,U}=
		\liminf_{K\to\infty}\frac{1}{K}\log\mathbb{P}^{K,U}\, .
	\end{align*}
	By compactness of $\Omega$, there must exist a further subsequence of $\{K_r\}$, which we denote by $\{K_{r'}\}$, and $\balpha\in \Omega$ such that
	$\lim_{r'\to \infty} \tilde{\bX}^{K_{r'}}=\balpha$.
	
	For any $0<\epsilon_1<\frac{1}{2}\left(\min_{j:\alpha_j>0}\alpha_j\right)$, define $\tilde{\balpha}\in\textup{relint}(\Omega)$ such that
	\begin{align*}
	0<\tilde{\alpha}_j <\epsilon_1/2\quad &\text{for $j$ such that }\alpha_j=0\, ,\\
	|\tilde{\alpha}_j-\alpha_j| <\epsilon_1/2\quad &\text{for $j$ such that }\alpha_j>0\, .
	\end{align*}
	Since $\balpha$ is the limit point of $\tilde{\bX}^{K_{r'}}$, there exists $r'_0(\epsilon)>0$ such that $\forall r'\geq r'_0(\epsilon)$,
	\begin{align}
	0 \leq \tilde{X}_j^{K_{r'}} < \tilde{\alpha}_j \quad &\text{for $j$ such that }\alpha_j=0\, ,\label{eq:alpha_cluster_zero}\\
	|\tilde{X}_j^{K_{r'}} -\alpha_j| < \epsilon_1/2\quad &\text{for $j$ such that }\alpha_j>0\, .\label{eq:alpha_cluster}
	\end{align}
	Inequalities \eqref{eq:alpha_cluster_zero} and \eqref{eq:alpha_cluster} imply that for $r' \geq r'_0(\epsilon)$
	\begin{align*}
	|\tilde{X}_j^{K_{r'}} -\tilde{\alpha}_j|
	\leq\ 
	\tilde{\alpha}_j
	<\epsilon_1,\quad &\text{for $j$ such that }\alpha_j=0 \\
	|\tilde{X}_j^{K_{r'}} -\tilde{\alpha}_j|
	\leq\ 
	|\tilde{X}_j^{K_{r'}} -\alpha_j|
	+
	|\tilde{\alpha}_j - \alpha_j|
	<\epsilon_1,\quad &\text{for $j$ such that }\alpha_j>0\, .
	\end{align*}
	Hence $||\tilde{\bX}^{K_{r'}} - \tilde{\balpha}||_{\infty} < \epsilon_1$ for $r' \geq r'_0(\epsilon)$.
	
	We quantify the fact that $\tilde{\balpha}$ is a ``frequently visited'' state in the following claim.
	
	\noindent\textit{Claim:} Fix $K=K_{r'}$ that comes from the subsequence defined above. In the $K$-th system, define
	\begin{align}
		\tau^{K} \triangleq \inf\left\{t>0:\fX^{K}(t)= \tilde{\bX}^{K} | \fKX(0)=\tilde{\bX}^{K}\right\}\, , \label{eq:cycle-time-converse-bound}
	\end{align}
	then we have
	$
		\mathbb{E}[\tau^{K}] \leq\ \frac{K^m}{\mathbf{1}^{\T}{\bphi}\mathbf{1}}\, .
	$
	
	\noindent\textit{Proof of claim:} Consider the discrete-time embedded chain of $\{\fX^{K}(\cdot)\}$. Since the initial state $\tilde{\bX}^{K}$ is positive recurrent within its communication class, the expected number of jumps between two consecutive visits to $\tilde{\bX}^{K}$ is inversely proportional to its steady state measure $\pi^{K}(\tilde{\bX}^K)$. By definition of $\tilde{\bX}^K$, the expected number of jumps must be no larger than $K^m$. Since the time between two jumps are i.i.d. exponential variables with mean $(\mathbf{1}^{\T}\bphi\mathbf{1})^{-1}$, this concludes the proof.
		
	\medskip
	
	\noindent \textit{Step 2: Lower bound on the ``one-shot'' throughput loss probability.}
	Fix $K_{r'}$ and a service token sample path $\fA^{K_{r'}}(\cdot)$.
	For $t>0$, define $f_{j'k}(t) \triangleq \frac{1}{t}\fA^{K_{r'}}(t)$, i.e. the average arrival rate of type ($j',k$) service token during $[0,t]$.
	For stationary policy $U$, denote the average fraction of service tokens arriving at $j'$ that serve jobs at $i$ during this period as $d^U_{ij'}(t)$ (we omit the superscript $U$ in the following for notational simplicity).
	For $t\geq 0$, if $\fX^{K_{r'}}(0) = \tilde{\bX}^{K_{r'}}$ and no service token is wasted prior to $t$, we have for any $i\in V_B$
	\begin{align*}
	\bar{X}_i^{K_{r'}}(t) - \tilde{X}_i^{K_{r'}}
	=\ & 
	t \left(\sum_{j'\in V_S}f_{j'i}(t) - \sum_{j'\in \partial(i)}d_{ij'}(t)\left(\sum_{k\in V_B}f_{j'k}(t)\right)
	\right)\, .
	\end{align*}
	Since $\tilde{\alpha}_j>0$ for any $j\in V_B$, the Lyapunov function $L_{\tilde{\balpha}}(\cdot)$ is well-defined.
	Evaluate the Lyapunov function at $\left(\tilde{\balpha} + \bar{\X}^{K_{r'}}(t) - \tilde{\bX}^{K_{r'}}  \right)$, we have:
	\begin{align}
	L_{\tilde{\balpha}}\left( \tilde{\balpha}  + \fX^{K_{r'}}(t) - \tilde{\bX}^{K_{r'}} \right) 
	=\; &
	L_{\tilde{\balpha}}\left(\tilde{\balpha} + t\left(\sum_{j'\in V_S}f_{j'i}(t) - \sum_{j'\in \partial(i)}d_{ij'}(t)\left(\sum_{k\in V_B}f_{j'k}(t)\right)
	\right)_{i\in V_B} \right)\nonumber\\
	%
	\stackrel{(a)}{=}\; & t L_{\tilde{\balpha}}\left(
	\tilde{\balpha} + 
	\left(\sum_{j'\in V_S}f_{j'i}(t) - \sum_{j'\in \partial(i)}d_{ij'}(t)\left(\sum_{k\in V_B}f_{j'k}(t)\right)
	\right)_{i\in V_B} \right)\nonumber\\
	\geq\; & t  \min_{\Delta \bx\in\mathcal{X}_{\mathbf{f}}}L_{\tilde{\balpha}}(\tilde{\balpha}+\Delta \bx).
	\label{eq:smallest_speed}
	\end{align}
	Equality $(a)$ holds because the Lyapunov function is scale-invariant with respect to $\tilde{\balpha}$.
	Here $\Delta \bx$ is the change of (normalized) state in unit time given average service token arrival rate during this period $\mathbf{f}$, and $\mathcal{X}_{\mathbf{f}}$ is defined in (\ref{eq:fluid_control_polytope}).
	
	Define $v_{\tilde{\balpha}}(\bof)\triangleq \min_{\Delta \bx\in\mathcal{X}_{\mathbf{f}}}L_{\tilde{\balpha}}(\tilde{\balpha}+\Delta \bx)$, which is the minimum rate the Lyapunov function increases under any policy, given service token arrival rate $\mathbf{f}$.
	Now we construct a set of service token sample paths that must lead to throughput loss before the system returns to the starting state.
	First note that $\{\mathbf{f}:v_{\tilde{\balpha}}(\mathbf{f})>0\}$ is non-empty. To see this, let $f'_{j'k}$ equal to $1$ for some $j'$ and $k \notin \partial(j')$, and $0$ otherwise (such a pair $(j',k)$ exists by Assumption \ref{asm:non_trivial}). This $\mathbf{f}'$ results in a strictly positive\footnote{To see this, notice that $L_{\tilde{\balpha}}(\bx)>0$ for any $\bx\in\Omega\backslash\{\tilde{\balpha}\}$, hence it suffices to show that $\mathbf{0}\notin \mathcal{X}_{\bof'}$. Because for any $\Delta\bx\in\mathcal{X}_{\bof'}$, we have $\Delta x_k=f'_{j'k}>0$, hence $\mathbf{0}\notin \mathcal{X}_{\bof'}$. This concludes the proof.}
	$v_{\tilde{\balpha}}(\mathbf{f}')$.
	Therefore for any $\epsilon_2>0$ there exists service token arrival rate $\tilde{\mathbf{f}}$ such that	
	\begin{align*}
	v_{\tilde{\balpha}}(\tilde{\mathbf{f}})>0
	\qquad\text{and}\qquad
	\frac{\Lambda^*(\tilde{\mathbf{f}})}{v_{\tilde{\balpha}}(\tilde{\mathbf{f}})}\leq
	\inf_{\mathbf{f}:v_{\tilde{\balpha}}(\mathbf{f})>0}\frac{\Lambda^*(\mathbf{f})}{v_{\tilde{\balpha}}(\mathbf{f})}+\epsilon_2.
	\end{align*}
	It is not hard to show that $v_{\tilde{\balpha}}(\mathbf{f})$ is continuous in $\mathbf{f}$,	hence there exists $\epsilon_3>0$ such that for any $\hat{\mathbf{f}}:||\hat{\mathbf{f}}-\tilde{\mathbf{f}}||_{\infty}<\epsilon_3$, we have
	\begin{align*}
	v_{\tilde{\balpha}}(\hat{\mathbf{f}}) > (1-\epsilon_2)v_{\tilde{\balpha}}(\tilde{\mathbf{f}}) > 0 \, .
	\end{align*}
	Denote $T\triangleq \frac{1+\frac{\epsilon_1}{\min_{j:\alpha_j>0}\alpha_j}}{(1-\epsilon_2)v_{\tilde{\balpha}}(\tilde{\mathbf{f}})}$, define
	\begin{equation*}
	\mathcal{B}_{\tilde{\balpha}} \triangleq \left\{
	\fA(\cdot)\in C\left[0,T\right]\left|
	\sup_{t\in\left[0,T\right]}
	||\fA(t) - t\tilde{\mathbf{f}}||_{\infty} \leq \epsilon_3
	\right.\right\}.
	\end{equation*}
	For any service token arrival sample path $\fA(\cdot)\in\mathcal{B}_{\tilde{\balpha}}$, we will show that for $t\in[0,T]$ the followings are true: (i) normalized state $\fX^{K_{r'}}(t)$ does not hit $\tilde{\bX}^{K_{r'}}$ before any service token is wasted; (ii) at least one service token is wasted.
	
	To prove (i), define function $\tilde{L}_{\tilde{\balpha}}(\fX)\triangleq L_{\tilde{\balpha}}\left(\tilde{\balpha} + \fX - \tilde{\bX}^{K_{r'}} \right)$.
	By definition, we have $L_{\tilde{\balpha}}(\bx)>0$ for any $\bx\in\{\bx\in\mathbb{R}^m:\mathbf{1}^{\T}\bx=1\}\backslash\{\tilde{\balpha}\}$, hence we have that $\tilde{L}_{\tilde{\balpha}}(\fX)>0$ for any $\fX\in\Omega\backslash\{\tilde{\bX}^{K_{r'}}\}$.
	By inequality (\ref{eq:smallest_speed}), if no service token is wasted during $[0,T]$ we have:
	\begin{align*}
	\tilde{L}_{\tilde{\balpha}}\left(\bar{\X}^{K_{r'}}(t)\right)
	\geq
	t v\left(\frac{1}{t}\fA(t)\right)
	\geq
	t \min_{\fA(\cdot)\in\mathcal{B}}v\left(\frac{1}{t}\fA(t)\right)
	>
	t (1-\epsilon_2)v_{\tilde{\balpha}}(\tilde{\mathbf{f}})
	>0.
	\end{align*}
	
	We prove (ii) by contradiction. Suppose no service token is wasted given (fluid scale) service token arrival sample path $\fA(\cdot)\in\mathcal{B}$, then
	\begin{align*}
	\tilde{L}_{\tilde{\balpha}}\left(\bar{\X}^{K_{r'}}
	(T)\right)
	\geq
	T
	\min_{\fA(\cdot)\in\mathcal{B}}v\left(\frac{1}{T}\fA(T)\right)
	>
	\frac{1+\frac{\epsilon_1}{\min_{j:\alpha_j>0}\alpha_j}}{(1-\epsilon_2)v_{\tilde{\balpha}}(\tilde{\mathbf{f}})}
	(1-\epsilon_2)v_{\tilde{\balpha}}(\tilde{\mathbf{f}})
	=
	1+\frac{\epsilon_1}{\min_{j:\alpha_j>0}\alpha_j}.
	\end{align*}
	Expand the expression of $\tilde{L}_{\tilde{\balpha}}\left(\bar{\X}^{K_{r'}}
	(T)\right)$ on the LHS, we have
	\begin{align*}
	1 - \min_j \frac{\bar{X}_j^{K_{r'}}(T) + \left(\tilde{\alpha}_j - \tilde{x}_j^{K_{r'}}\right)}{\tilde{\alpha}_j}
	>
	1+\frac{\epsilon_1}{\min_{j:\alpha_j>0} \alpha_j}\, .
	\end{align*}
	Therefore
	\begin{align}
	\min\left\{\min_{j:\alpha_j=0}\frac{\bar{X}_j^{K_{r'}}(T)}{\tilde{\alpha}_j},
	\min_{j:\alpha_j>0}\frac{\bar{X}_j^{K_{r'}}(T)-\epsilon_1/2}{\tilde{\alpha}_j}
	\right\}
	\leq\ &
	\min_j \frac{\bar{X}_j^{K_{r'}}(T) + \left(\tilde{\alpha}_j - \tilde{x}_j^{K_{r'}}\right)}{\tilde{\alpha}_j}\nonumber\\
	<\ &
	-\frac{\epsilon_1}{\min_{j:\alpha_j>0} \alpha_j}.\label{eq:converse_aux_1}
	\end{align}
\yka{The equation is spilling over.}
	Note that the first inequality in (\ref{eq:converse_aux_1}) holds because of (\ref{eq:alpha_cluster_zero}) and (\ref{eq:alpha_cluster}).
	Inequality (\ref{eq:converse_aux_1}) implies that $\min_j \bar{X}_j^{K_{r'}}(T) < 0$, which is impossible as queue lengths must be non-negative.
	\ \\
	
	\noindent\textit{Step 3: Asymptotic steady-state lower bound on throughput-loss probability.}
	We use renewal-reward theorem to lower bound the throughput loss probability.\yka{What is the renewal-reward theorem? Is there a citation?}
	Consider the regenerative process that restarts each time $\fX^{K_{r'}}(t) = \tilde{\bX}^{K_{r'}}$.
	Without loss of generality, let $\bar{\X}^{K_{r'}}(0) = \tilde{\bX}^{K_{r'}}$. 
	Recall the definition of $\tau^K$ in \eqref{eq:cycle-time-converse-bound}.
	Using the claim in step 1 and the result in step 2, we have:
	\begin{align*}
	\mathbb{P}^{K_{r'},U}
	& =\ \frac{\mathbb{E}\left[\#\{\textup{service token wasted during $[0,\tau]$}\}\right]}{\mathbb{E}[\tau]}
	\\
	&\geq\
	{K_{r'}}^{-m} (\mathbf{1}^{\T}{\bphi}\mathbf{1})
	\mathbb{E}\left[\#\{\textup{service token wasted during $[0,\tau]$}\}\right]\\
	&\geq\
	{K_{r'}}^{-m} (\mathbf{1}^{\T}{\bphi}\mathbf{1})
	\mathbb{P}\left(\#\{\textup{service token wasted during $[0,\tau]$}\}\geq 1\right)
	\\
	&\geq\
	{K_{r'}}^{-m} (\mathbf{1}^{\T}{\bphi}\mathbf{1})
	\mathbb{P}\left(\fA^{K_{r'}}(\cdot)\in\mathcal{B}_{\tilde{\balpha}}\right).
	\end{align*}
	Take asymptotic limit on both sides, we have:
	\begin{align*}
	\liminf_{r'\to\infty}\frac{1}{K_{r'}}\log\mathbb{P}^{K_{r'},U}
	&\geq\
	\liminf_{r'\to\infty}\frac{1}{K_{r'}}\log\mathbb{P}\left(\fA^{K_{r'}}(\cdot)\in\mathcal{B}_{\tilde{\balpha}}\right)\\
	&\stackrel{(a)}{\geq}\ -\inf_{\fA(\cdot)\in\mathcal{B}_{\tilde{\balpha}}^{o}\cap\textup{AC}[0,T]}\int_{0}^{T}\Lambda^*\left(\dot{\fA}(t)\right)dt\\
	&\stackrel{(b)}{\geq}\ -T \Lambda^*(\tilde{\mathbf{f}})\\
	&=\  -\frac{1+\frac{\epsilon_1}{\min_{j:\alpha_j>0}\alpha_j}}{(1-\epsilon_2)v_{\tilde{\balpha}}(\tilde{\mathbf{f}})}\Lambda^*(\tilde{\mathbf{f}})\\
	&\geq\  -\frac{1+\frac{\epsilon_1}{\min_{j:\alpha_j>0}\alpha_j}}{1-\epsilon_2}\left(\inf_{\mathbf{f}:v_{\tilde{\balpha}}(\mathbf{f})>0}\frac{\Lambda^*(\mathbf{f})}{v_{\tilde{\balpha}}(\mathbf{f})}+\epsilon_2\right).
	\end{align*}
	Here (a) holds because of Mogulskii's Theorem (Fact \ref{fact:sample_path_ldp}), (b) holds because service token sample path $\fA(t)=t\tilde{\mathbf{f}}\in \textup{AC}[0,T]$ is a member of $\mathcal{B}_{\tilde{\balpha}}$.
	For any $\delta>0$, by choosing small enough $\epsilon_1(\delta),\epsilon_2(\delta) > 0$, we have
	\begin{align*}
	-\liminf_{r'\to\infty}\frac{1}{K_{r'}}\log\mathbb{P}^{K_{r'},U}
	\leq
	(1+\delta)(\gamma_{\textup{\tiny CB}}(\tilde{\balpha}(\delta)) + \delta).
	\end{align*}
Here the choice of $\tilde{\balpha}$ depends on $\delta$.
	To get rid of the multiplicative term $(1+\delta)$, it suffices to show that $\sup_{\balpha\in\textup{relint}(\Omega)}\gamma_{\textup{\tiny CB}}(\balpha)<\infty$.
	This can be proved by the following construction: let $\fA(t)=t\mathbf{f}'$ for $t\in[0,1]$ where $f_{j'k}=1$ for some $j'\in V_S$ and $k\notin \partial(j')$. 
	Because $\gamma_{\textup{\tiny CB}}(\balpha)$ is defined by an infimum $\gamma_{\textup{\tiny CB}}(\balpha) \triangleq
	 \inf_{\mathbf{f} \in \mathbb{R}_+^{nm}:v_{\alpha}(\mathbf{f})>0}\,
	 \frac{\Lambda^*(\mathbf{f})}{v_{\balpha}(\mathbf{f})}$, we have $\gamma_{\textup{\tiny CB}}(\balpha)\leq \frac{\Lambda^*(\bof')}{v_{\tilde{\balpha}}(\mathbf{f}')}$. By definition, $v_{\tilde{\balpha}}(\bof')
	 =
	 1-\max_{\Delta\bx\in\mathcal{X}_{\bof'}}\min_{i} \frac{\tilde{\alpha}_i+\Delta x_i}{\tilde{\alpha}_i}
	 =
	 -\max_{\Delta\bx\in\mathcal{X}_{\bof'}}\min_{i} \frac{\Delta x_i}{\tilde{\alpha}_i}
	 $.
	 Note that
	 \begin{align*}
	 	\mathcal{X}_{\bof'}=\ 
	 	\{
	 	\Delta\bx\in\mathbb{R}^{|V_B|}:
	 	\sum_{i\in\partial(j')}\Delta x_i=-1\, ,
	 	\Delta x_i\leq 0\textup{ for }i\in\partial(j')\, ,
	 	\Delta x_k=1\,  \\
	 	\Delta x_i=0\textup{ for }i\notin \partial(j')\cup\{k\}
	 	\}\, .
	 \end{align*}
	 Therefore
	 \begin{align*}
	 	\max_{\Delta\bx\in\mathcal{X}_{\bof'}}\min_{i\in V_B} \frac{\Delta x_i}{\tilde{\alpha}_i}
	 	=\ \max_{\Delta\bx\in\mathcal{X}_{\bof'}}\min_{i\in\partial(j')} \frac{\Delta x_i}{\tilde{\alpha}_i}
	 	\leq\ \max_{\Delta\bx\in\mathcal{X}_{\bof'}}\min_{i\in\partial(j')}\Delta x_i
	 	\leq\ -\frac{1}{|\partial(j')|}
	 	\leq\ -\frac{1}{m}\, .
	 \end{align*}
	 Hence
	 $
	 v_{\tilde{\balpha}}(\bof')
	 \geq 
	 \frac{1}{m}$. \yka{I'm not clear why the first inequality holds.}
	Hence $\gamma_{\textup{\tiny CB}}(\alpha)\leq \frac{\Lambda^*(\bof')}{v_{\tilde{\balpha}}(\mathbf{f}')}\leq m\Lambda^*(\mathbf{f}') <\infty$.
	Therefore by choosing a small enough $\delta$, we have
	\begin{align*}
	-\liminf_{r'\to\infty}\frac{1}{K_{r'}}\log\mathbb{P}^{K_{r'},U}
	\leq
	\gamma_{\textup{\tiny CB}}(\tilde{\balpha}(\epsilon)) + \epsilon.
	\end{align*}
	By the definition of subsequence $\{K_{r'}\}$, we have
\begin{align*}
	-\liminf_{K\to\infty}\frac{1}{K}\log\mathbb{P}^{K,U}
	\leq
	\gamma_{\textup{\tiny CB}}(\tilde{\balpha}(\epsilon)) + \epsilon.
	\end{align*}
As a result, for any $\epsilon>0$ there exists $\balpha\in\Omega$ such that $-\liminf_{K\to\infty}\frac{1}{K}\log\mathbb{P}^{K,U}
		\leq  \gamma_{\textup{\tiny CB}}(\balpha) + \epsilon$, therefore
$-\liminf_{K\to\infty}\frac{1}{K}\log\mathbb{P}^{K,U}
		\leq \sup_{\balpha \in \textup{relint}(\Omega)} \gamma_{\textup{\tiny CB}}(\balpha)$.
\end{proof}

\section{Sufficient conditions for optimality: proof of Proposition \ref{prop:tight_converse}}\label{appen:sec:sufficient}
The proof of Proposition \ref{prop:tight_converse} consists of two parts.
We first derive an achievability bound for policies that, {for a given $\balpha \in \textup{relint}(\Omega)$, satisfy the negative drift property in Proposition \ref{prop:tight_converse}; we then show it matches the converse bound in Lemma~\ref{lem:point_wise_converse} for that specific $\balpha$ (i.e., $\gamma_{{\textup{\tiny CB}}}(\balpha)$)} if the steepest descent property in Proposition~\ref{prop:tight_converse} is also satisfied.

\yka{Delete ``Recall the link between the exponent and the value of the corresponding differential game mentioned earlier in this section.
Since $\gamma_{{\textup{\tiny AB}}}=\gamma_{{\textup{\tiny CB}}}$, inequality (\ref{eq:sub_add_ineq}) should hold with equality for any exponent optimal policy and its corresponding most likely sample path.
Note that \textit{radial} sample paths $\fX^{U}$ starting at $\balpha$ makes (\ref{eq:sub_add_ineq}) equality,
hence a `weak' adversary who only chooses between time invariant arrival patterns should be already optimal under an exponent optimal policy.
We will formalize the above statement in Section \ref{subsec:smw_exponent_subset} and explicitly characterize the most likely sample paths.'' Where was $\gamma_{{\textup{\tiny AB}}}$ defined?}
\yka{Doesn't this argument imply/suggest why the radial converse in Lemma~\ref{lem:point_wise_converse} is tight? The second inequality should be equality for a radial sample path. The first inequality should be = above and $\geq$ for an arbitrary policy. I want to provide intuition here reg. \emph{why} radial sample paths give a tight converse. (see my earlier comment.)}

\yka{The requirement $\V_{\balpha}(\bar{\X}[t])=v$ seems unnecessary and could be confusing. If it suffices that $\V_{\balpha}(\bar{\X}[t])<1$ then why not write it that way?}
\yka{There is a notation conflict between $\gamma(\balpha)$ here and that in the achievability part of the main theorem, currently (11). I suggest we use some other notation here, e.g., $\gamma_{{\textup{\tiny LD}}}(\balpha)$ or $\gamma_{{\textup{v}}}(\balpha)$ (v for variational), and later establish that $\gamma(\balpha)=\gamma_{{\textup{\tiny LD}}}(\balpha)$.}

\subsection{An achievability bound}\label{appen:subsec:achievability}
The following lemma is an adaptation of Theorem 5 and Proposition 7 in \cite{venkataramanan2013queue} to our setting.
It gives the achievability bound for the exponent of the steady state throughput-loss probability, for any policy such that the negative drift condition in Proposition \ref{prop:tight_converse} is met for $L_{\balpha}(\cdot)$ where $\balpha\in\textup{relint}(\Omega)$.
The main technical difficulty comes from the fact that it characterizes the \textit{steady state} of the system.
The analysis uses Freidlin-Wentzell theory and follows from \cite{stolyar2003control,venkataramanan2013queue}.
While the main proof idea follows that in \cite{venkataramanan2013queue}, we refine the results there by dropping the assumption that all FSPs are Lipschitz continuous with a universal Lipschitz constant. This allows us to deal with Poisson-driven service token arrival processes which does not satisfy this assumption.

\yka{But the lemma statement doesn't assume anything about the policy $U$ satisfying the conditions of Prop 3!}
\yka{Suddenly why does $\Omega$ depend on $K$? We are working with scaled states throughout so write ``given scaled initial state $\bz$''.}


\begin{lem}[Achievability bound]\label{lem:lower_bound_vj}
	For the system being considered, if policy $U$ satisfies the negative drift condition in Proposition \ref{prop:tight_converse} for $L_{\balpha}(\cdot)$ where $\balpha\in\textup{relint}(\Omega)$,
we have (the subscript ``AB'' stands for achievability bound)\yka{What are the assumptions on $U$? What is $\alpha$?}\yka{The notation is inconsistent. In \eqref{eq:ub_performance_measure} you used $P_{\textup{p}}^{K,U}$. Later in the proof, you use $\prob$ to denote the steady state probability but you don't clarify that this is what you mean.}
	\begin{equation}\label{eq:lower_bound_vj}
	-\limsup_{\N\to\infty} \frac{1}{\N}\log
	\mathbb{P}^{K,U}
	\geq
	\gamma_{\textup{\tiny AB}}(\balpha)\, .
	\end{equation}
	Here for fixed\,\footnote{The definition of quantity $\gamma_{{\textup{\tiny AB}}}(\balpha)$ is based on the local behavior of $\fA$ and $\fX$ for times close to $t$. In particular, the value of $T$ plays no role.} $T>0$,
	\begin{equation*}
	\begin{split}
	\gamma_{\textup{\tiny AB}}(\alpha)\triangleq & \inf_{v>0,\bof,\fA,\fX}\frac{\Lambda^*(\bof)}{v}\, ,\\
	where \ &(\fA,\fX)\text{ is a FSP on $[0,T]$ under $U$ such that for some regular }t\in[0,T]\\
	&\dot{\fA}(t) = \bof\, ,\quad L_{\balpha}(\fX(t))<1 \, ,\quad
	\dot{L}_{\balpha}(\fX(t)) = v\, .
	\end{split}
	\end{equation*}
\end{lem}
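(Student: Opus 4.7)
The plan is to adapt the Freidlin--Wentzell type argument of \citet{venkataramanan2013queue} to our closed network setting, replacing their Lyapunov function (which vanishes at the empty state) with our policy-specific $L_\alpha(\cdot)$ (which vanishes at the resting point $\alpha$). The roadmap has three conceptual steps: (i) lift the problem from steady state to a finite-horizon excursion problem using the negative drift property, (ii) apply Mogulskii's theorem (Fact~\ref{fact:sample_path_ldp}) to obtain a variational characterization of the cost of such excursions, (iii) reduce the variational problem to $\gamma_{\textup{\tiny AB}}(\alpha)$ using scale invariance and sub-additivity of $L_\alpha$.

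First I would show that the stationary measure concentrates on a neighborhood of $\alpha$. Specifically, by the negative drift condition applied to $L_\alpha$, starting from any state there is a finite (deterministic) time $T_0$ after which, on the typical arrival sample path, $\bar\X^U[\cdot]$ is within an arbitrarily small ball of $\alpha$. I would use this to set up a regenerative structure: define stopping times at which $\bar\X^U[t]\in B(\alpha,\delta)$, and bound the steady-state demand drop probability by the product of (expected demand drops per cycle) and (inverse expected cycle length). Because the cycle length is $\Omega(1)$ on the fluid scale, the bound on $\mathbb{P}_{\textup{p}}^{K,U}$ reduces to bounding the probability that a single cycle starting near $\alpha$ experiences the ``bad'' event $\{L_\alpha(\bar\X^U[t]) = 1\}$ at some $t\in[0,T]$ for fixed $T$.

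Next I would apply sample path LDP. Any FSP under $U$ that reaches $L_\alpha=1$ starting near $\alpha$ has demand component $\bar\bA[\cdot]$ whose rate function is $I_T(\bar\bA)=\int_0^T\Lambda^*(\frac{d}{ds}\bar\bA(s))\,ds$. Hence by Fact~\ref{fact:sample_path_ldp}, the exponent for the bad event is lower bounded by $\inf I_T(\bar\bA)$ taken over all such ``lifting'' FSPs. The remaining task is to argue that the infimum equals $\gamma_{\textup{\tiny AB}}(\alpha)$, i.e.\ it suffices to consider paths with constant arrival derivative $\bof$, giving cost $T\Lambda^*(\bof)$ for a path that produces total lift $Tv$ of the Lyapunov function. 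Minimizing over $T$ (with $Tv=1$) gives $\Lambda^*(\bof)/v$, and infimizing over admissible pairs $(\bof,v)$ yields $\gamma_{\textup{\tiny AB}}(\alpha)$. The key observations that license this reduction are (a) scale invariance of $L_\alpha$ about $\alpha$, which implies that replacing an FSP over $[0,T]$ with its linear interpolation preserves the lift, and (b) convexity of $\Lambda^*$, which via Jensen's inequality shows that averaging the arrival derivative over $[0,T]$ can only reduce the rate-function cost. This mirrors Proposition~7 of \citet{venkataramanan2013queue} and goes through verbatim in our piecewise-linear setting.

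The main obstacle will be step (i): making the regenerative reduction rigorous despite the fact that $\bar\X^U[\cdot]$ is not uniquely determined by $\bar\bA[\cdot]$ in general, so we cannot simply apply the contraction principle. I plan to work directly with the pre-limit Markov chain, using the negative drift condition to give a uniform (in $K$) geometric tail bound on the return time to $B(\alpha,\delta)$, and bounding the number of demand drops per excursion by a polynomial in $K$ (since each excursion has $O(K)$ length with overwhelming probability, by the FSP Lipschitz property). Together these ensure that the excursion-level analysis loses only sub-exponential factors, so the exponent inherited from the LDP on excursions is indeed $\gamma_{\textup{\tiny AB}}(\alpha)$. Combining with steps (ii)--(iii) gives \eqref{eq:lower_bound_vj}.
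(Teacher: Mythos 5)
Your proposal follows essentially the same route as the paper's proof: a Freidlin--Wentzell cycle decomposition around the resting point $\alpha$ with a renewal-reward identity, an application of Mogulskii's theorem to the per-cycle excursion probability, and a reduction to the one-dimensional variational problem via the scale invariance of $L_\alpha$ (mirroring Theorem~5 and Proposition~7 of \citealt{venkataramanan2013queue}), with the paper likewise handling the non-uniqueness of $\bar{\X}^U$ given $\bar{\bA}$ by working with the pre-limit chain and all FSPs, and controlling the cycle-length tail and the return time via the negative drift condition (the latter through a Meyn--Tweedie state-dependent drift bound). The plan is sound and matches the paper's argument in all essential respects.
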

The proof of Lemma \ref{lem:lower_bound_vj} is similar to the proofs of Theorem 5 and Proposition 7 in \cite{venkataramanan2013queue}.
In the interest of space, we refer the interested readers to Supplementary Appendix \ref{online_appen_main_proof} for the full proof.

\subsection{Converse bound matches achievability bound}
{In Lemma \ref{lem:point_wise_converse} we obtain a converse bound which holds for any state-dependent policy. However, for a given policy $U$ can we obtain a tighter policy-specific converse bound? In the following Lemma, we show that for policies that satisfy the negative drift property in Proposition \ref{prop:tight_converse} for Lyapunov function $L_{\balpha}(\cdot)$ where $\balpha \in \textup{relint}(\Omega)$, there is a tighter converse bound given by $\gamma_{{\textup{\tiny CB}}}(\balpha)$.}

\begin{lem}\label{lem:fluid_resting_point}
	For policies $U\in\mathcal{U}$ that satisfy the negative drift condition in the statement of Proposition \ref{prop:tight_converse} for $\balpha \in\textup{relint}(\Omega)$, we have\yka{This is very confusing. For which $\alpha$ does this hold? Given that we already have Lemma \ref{lem:point_wise_converse}, why do we need this lemma?}
	\begin{align*}
		-\liminf_{K\to\infty} \frac{1}{K}\log \mathbb{P}^{K,U} \leq \gamma_{{\textup{\tiny CB}}}(\balpha)\, .
	\end{align*}
\yka{Really? You have equality without needing the steepest descent condition? More likely you intended to claim an inequality $ \leq$.}
\end{lem}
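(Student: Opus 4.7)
The plan is to reduce this to Lemma~\ref{lem:point_wise_converse} by showing that, under the negative drift condition, the ``heavy'' state $\tilde{\bx}^K$ constructed in its proof can always be chosen arbitrarily close to the specific $\alpha$ given by the hypothesis, and then invoking continuity of $\gamma_{\textup{\tiny CB}}(\cdot)$ to replace the (a priori uncontrolled) limit point with $\alpha$ itself.

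First I would establish stationary concentration of the scaled state near $\alpha$. Fix any recurrent class of $U$ in the $K$-th system that attains the optimistic demand-drop probability, and let $\pi^{K,U}$ denote its stationary distribution. Applying Lemma~\ref{lem:fluid_exp_dai} to initial states sampled from $\pi^{K,U}$ and exploiting stationarity yields $\mathbb{E}_{\pi^{K,U}}[L_\alpha] \to 0$ as $K \to \infty$. Markov's inequality then gives $\pi^{K,U}(\{\bx : L_\alpha(\bx) \leq \delta\}) \to 1$ for every $\delta > 0$. Since $|\Omega_K| \leq (K+1)^n$, a pigeonhole step produces, for all $K$ large enough, some $\tilde{\bx}^K$ with $L_\alpha(\tilde{\bx}^K) \leq \delta$ and $\pi^{K,U}(\tilde{\bx}^K) \geq \tfrac{1}{2}(K+1)^{-n}$.

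Next I would run the construction in the proof of Lemma~\ref{lem:point_wise_converse} verbatim using this choice of $\tilde{\bx}^K$. Any subsequential limit $\alpha^\prime$ of $\{\tilde{\bx}^K\}$ then satisfies $L_\alpha(\alpha^\prime) \leq \delta$; a short calculation from Definition~\ref{defn:lyap_func} (namely, $L_\alpha(\alpha^\prime) \leq \delta$ forces $\alpha^\prime_i \geq (1-\delta)\alpha_i$ for all $i$, which combined with $\sum_i \alpha^\prime_i = 1$ yields $\|\alpha^\prime - \alpha\|_\infty \leq \delta$) shows that $\alpha^\prime$ lies within $\delta$ of $\alpha$. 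Since $\alpha \in \textup{relint}(\Omega)$, for sufficiently small $\delta$ the limit $\alpha^\prime$ is also in $\textup{relint}(\Omega)$, and the perturbed $\tilde{\alpha}$ in that proof may be chosen so that $\|\tilde{\alpha} - \alpha\|_\infty = O(\delta + \epsilon)$. The proof of Lemma~\ref{lem:point_wise_converse} then delivers
\begin{equation*}
-\liminf_{K \to \infty} \tfrac{1}{K} \log \mathbb{P}_{\textup{o}}^{K,U} \,\leq\, \gamma_{\textup{\tiny CB}}(\tilde{\alpha}) + \epsilon \,.
\end{equation*}

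The main obstacle is then to verify continuity of $\gamma_{\textup{\tiny CB}}(\cdot)$ at $\alpha$, so that sending $\delta, \epsilon \to 0$ yields the desired bound $\gamma_{\textup{\tiny CB}}(\alpha)$. The map $(\alpha^\prime, \bx) \mapsto L_{\alpha^\prime}(\bx)$ is jointly continuous on any compact subset of $\textup{relint}(\Omega) \times \Omega$, so $v_{\alpha^\prime}(\mathbf{f})$ is continuous in $\alpha^\prime$ for each fixed $\mathbf{f}$. The subtlety is that $\gamma_{\textup{\tiny CB}}(\alpha^\prime)$ is an infimum over the unbounded set $\{\mathbf{f} : v_{\alpha^\prime}(\mathbf{f}) > 0\}$, on which $v_{\alpha^\prime}$ can vanish. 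I would handle this by restricting to the sub-level set $\{\mathbf{f} : \Lambda^*(\mathbf{f}) \leq 2\gamma_{\textup{\tiny CB}}(\alpha)\}$, which is compact via the superlinear growth of $\Lambda^*$, and arguing that $v_{\alpha^\prime}(\mathbf{f})$ is bounded below uniformly on this compact set in a small neighborhood of $\alpha$. This uniform bound yields joint continuity of the ratio $\Lambda^*(\mathbf{f})/v_{\alpha^\prime}(\mathbf{f})$ in $(\alpha^\prime, \mathbf{f})$ on a compact set containing the effective minimizers, giving continuity of $\gamma_{\textup{\tiny CB}}$ at $\alpha$ and completing the proof.
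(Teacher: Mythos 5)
Your proposal is correct in substance but follows a genuinely different route from the paper's. The paper does not go through the stationary distribution or a pigeonhole step at all for this lemma: it uses Lemma \ref{lem:fluid_exp_dai} plus Markov's inequality to show that from \emph{any} state and any time, the scaled state satisfies $L_{\alpha}<2\epsilon_1$ after $1/\eta$ units of scaled time with probability at least $1/2$; it then partitions time into epochs of length $\Theta(K)$ and, within each epoch, conditions on this near-$\alpha$ event and reruns the adversarial sample-path construction of Lemma \ref{lem:point_wise_converse} started from that state, so the exponent $\gamma_{\textup{\tiny CB}}(\alpha)$ is obtained directly at the prescribed $\alpha$ with no continuity argument needed. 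You instead keep Lemma \ref{lem:point_wise_converse}'s renewal-reward skeleton intact, use the negative drift to force its heavy atom $\tilde{\bx}^K$ (and hence the subsequential limit $\alpha'$ and the perturbation $\tilde{\alpha}$) to lie within $O(\delta+\epsilon)$ of $\alpha$, and then pay for this reuse with a continuity argument for $\gamma_{\textup{\tiny CB}}$. That trade is legitimate: your concentration-plus-pigeonhole step and the computation $L_{\alpha}(\alpha')\leq\delta\Rightarrow\|\alpha'-\alpha\|_{\infty}\leq\delta$ are both fine, and the paper's own proof of Lemma \ref{lem:point_wise_converse} indeed only uses that $\tilde{\bx}^K$ carries polynomially-in-$K$ stationary mass and converges along a subsequence.

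Two remarks on the continuity step, which is the only place your sketch is imprecise. First, $\Lambda^*$ here is a KL divergence equal to $+\infty$ off the probability simplex, so its sublevel sets are compact for free; more importantly, the set $\{\mathbf{f}:\Lambda^*(\mathbf{f})\leq 2\gamma_{\textup{\tiny CB}}(\alpha)\}$ contains $\phi$ and other points where $v_{\alpha'}(\mathbf{f})\leq 0$, so the claim that $v_{\alpha'}$ is uniformly bounded below by a positive constant on that set is false as stated; you would have to further intersect with something like $\{\mathbf{f}:v_{\alpha}(\mathbf{f})\geq\epsilon_0\}$. Second, you only need \emph{upper} semicontinuity of $\gamma_{\textup{\tiny CB}}$ at $\alpha$ (to conclude $\gamma_{\textup{\tiny CB}}(\tilde{\alpha})\leq\gamma_{\textup{\tiny CB}}(\alpha)+o(1)$), and that direction needs no uniform control: pick a near-minimizer $\mathbf{f}^*$ for $\alpha$ with $v_{\alpha}(\mathbf{f}^*)>0$ and use continuity of $\alpha'\mapsto v_{\alpha'}(\mathbf{f}^*)$ alone to get $\gamma_{\textup{\tiny CB}}(\tilde{\alpha})\leq\Lambda^*(\mathbf{f}^*)/v_{\tilde{\alpha}}(\mathbf{f}^*)\to\Lambda^*(\mathbf{f}^*)/v_{\alpha}(\mathbf{f}^*)$. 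Alternatively, Lemma \ref{lem:explicit_gamma} exhibits $\gamma_{\textup{\tiny CB}}$ as a minimum of finitely many linear functions of $\alpha$, hence Lipschitz on $\textup{relint}(\Omega)$. With either repair your argument closes.
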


\begin{proof}
	The following proof is very similar to the proof of Lemma \ref{lem:point_wise_converse}.
We will emphasize the parts that are different and skip the repetitive arguments.
In the proof of Lemma \ref{lem:point_wise_converse}, we divide the process into cycles and apply the renewal-reward theorem. We follow the same approach here except that we define the cycles differently.

\noindent \textit{Step 1: Show that $\balpha$ is the ``resting point'' of $U$.}
Fix $\epsilon_1>0$ and define
\begin{align*}
	\tau^{K} \triangleq \inf\left\{ t\geq0: L_{\balpha}(\fKX(t))\leq\epsilon_1 \right\}\, .
\end{align*}
Using the argument in Step 2b(i) of the proof of Lemma \ref{lem:lower_bound_vj}, we can show that there exists $K_0 = K_0(\epsilon_1)>0$ and constant $C>0$ such that for $K\geq K_0$,
	\begin{align*}
	\sup_{\bx\in\Omega}\mathbb{E}\left(\tau^{K}|\fKX(0)=\bx \right) \leq C\, .
	\end{align*}
	In other words, starting from any state, the expected time for the system state to reach the $O(\epsilon_1)$-neighborhood of $\balpha$ is bounded from above by a constant.

\noindent\textit{Step 2: Lower bound the throughput-loss probability.} Proceed exactly as Step 2 and Step 3 in the proof of Lemma \ref{lem:point_wise_converse}, we explicitly construct a service token sample path that guarantees throughput loss within $\Theta(1)$ units of time given the starting state satisfies $L_{\alpha}\left(\fKX(T+\tau^K)\right) < \epsilon_1$.
Then we obtain the result.
\end{proof}

Now we combine Lemma \ref{lem:lower_bound_vj} and Lemma \ref{lem:fluid_resting_point} to prove Proposition \ref{prop:tight_converse} by showing that $\gamma_{{\textup{\tiny AB}}}(\balpha) = \gamma_{{\textup{\tiny CB}}}(\balpha)$.
Lemma \ref{lem:key_property_lyap} and the steepest descent property in Proposition \ref{prop:tight_converse} are crucial in showing $\gamma_{{\textup{\tiny AB}}}(\balpha) \geq \gamma_{{\textup{\tiny CB}}}(\balpha)$ (the other direction is obvious).

\begin{proof}[Proof of Proposition~\ref{prop:tight_converse}]
Let $U\in\mathcal{U}$ satisfy the conditions in Proposition \ref{prop:tight_converse}.
Then for regular $t$ we have
\begin{align*}
\dot{L}_{\balpha}(\fX(t))
&\leq\; \inf_{U'\in\mathcal{U}}\left\{\dot{L}_{\balpha}(\fX^{U'}(t))
\left|\dot{\fA}'(t) = \mathbf{f} \right.
\right\}&\textup{(steepest descent)}\\
&=\; \min_{\Delta\bx\in\mathcal{X}_{\mathbf{f}}}
\lim_{\Delta t\to 0}\frac{L_{\balpha}(\bar{\bX}^{U'}(t)+\Delta\bx\Delta t)-
	L_{\balpha}(\fX^{U'}(t))
}{\Delta t}\\
&\leq\; \min_{\Delta\bx\in\mathcal{X}_{\mathbf{f}}}\lim_{\Delta t\to 0}\frac{L_{\balpha}(\balpha+\Delta\bx\Delta t)
}{\Delta t} &\textup{(sub-additivity, Lemma \ref{lem:key_property_lyap})}\\
&=\; \min_{\Delta\bx\in\mathcal{X}_{\mathbf{f}}}L_{\balpha}(\balpha+\Delta\bx)=v_{\balpha}(\mathbf{f})\, .&\textup{(scale-invariance, Lemma \ref{lem:key_property_lyap})}
\end{align*}
Let $v=\dot{L}_{\balpha}(\fX(t))$,
from $v \leq v_{\balpha}(\mathbf{f})$ we have $\{v>0\}\subset \{v_{\balpha}(\mathbf{f})> 0\}$, hence using Lemma \ref{lem:lower_bound_vj} we have
\begin{align*}
	\gamma_{{\textup{\tiny AB}}}(\balpha)
	=
	\inf_{v>0,\bof,\fA,\fX}\frac{\Lambda^*(\bof)}{v}
	\geq
	\inf_{\mathbf{f}:v_{\balpha}(\mathbf{f})>0}\frac{\Lambda^*(\mathbf{f})}{v_{\balpha}(\mathbf{f})}
	=
	\gamma_{{\textup{\tiny CB}}}(\balpha)\, .
\end{align*}
But since by Lemma \ref{lem:fluid_resting_point} we know $\gamma_{{\textup{\tiny CB}}}(\balpha)$ is a converse bound for policy $U$, hence $\gamma_{{\textup{\tiny AB}}}(\balpha)\leq \gamma_{{\textup{\tiny CB}}}(\balpha)$. Therefore $\gamma_{{\textup{\tiny AB}}}(\balpha)= \gamma_{{\textup{\tiny CB}}}(\balpha)$.
\end{proof}

\section{SMW policies and explicit exponent}\label{appen:sec:smw}

Appendix~\ref{appen:sec:smw} shows that the SMW policy satisfies the sufficient conditions for exponent optimality, and derives explicitly the optimal exponent and most-likely sample paths, including the proofs of Lemma \ref{lem:lyapunov_derivative}, Lemma \ref{lem:lyapunov_derivative_fluid}, and Lemma \ref{lem:explicit_gamma}. 

\subsection{Lyapunov drift of FSPs under SMW: proof of Lemma \ref{lem:lyapunov_derivative}}
{In this subsection we prove Lemma \ref{lem:lyapunov_derivative} which establishes that SMW($\balpha$) policies perform steepest descent on $L_{\balpha}(\cdot)$.}
\begin{proof}[Proof of Lemma \ref{lem:lyapunov_derivative}]
	For notation simplicity, we will write $S_1(\fX(t))$ as $S_1$, $S_2\left(\fX(t),\dot{\fX}(t)\right)$ as $S_2$, and 
	$
	\min_{k\in S_1}\frac{\dot{\bar{X}}_k(t)}{\alpha_k}
	$ as $c$
	in the following.
	Let $(\fA,\fX)$ be an FSP under policy $U\in\mathcal{U}$.
	\begin{compactitem}[leftmargin=*]
		\item \textit{Proof of (\ref{eq:lyap_FSP_1}).} 
		Note that $t$ is a \emph{regular} time, hence $L_{\balpha}(\fX(\cdot))$ and $\fX(\cdot)$ are differentiable at $t$. It follows from the definition of derivatives that $\dot{L}_{\balpha}(\fX(t))$ is determined by the queues in $S_2$ alone, hence we have $\dot{L}_{\balpha}(\fX(t))=\ -\min_{k\in S_1}\frac{\dot{\bar{X}}_k(t)}{\alpha_k}= -c$.
		\item \textit{Proof of (\ref{eq:lyap_FSP_2}).}	
		For the $K$-th system, define auxiliary processes:
		\begin{align*}
			 \bar{E}_{ij'k}^{K,U}(t) \triangleq\ \# &\left\{ \textup{Type $(j',k)$ service tokens that arrive during $[0,t]$} \right. \\
			&\left. \textup{and serve jobs at $i$ under policy $U\in \cU$} \right\}\quad i,k\in V_B\, ,\, j'\in V_S\, .
		\end{align*}
		Using standard argument \citep[see, e.g.,][]{dai2005maximum}, we can extend the definition of FSP (Definition \ref{def:FSP}) to $(\fA(\cdot),\fX(\cdot),\fE(\cdot))$, where a subsequence of $\fE^{K,U}(\cdot)$ converges u.o.c. to $\fE(\cdot)$.
		We focus on the regular times $t$ where $\dot{\fE}(t)$ exists, which includes almost all regular times because $\dot{\fE}(t)$ is differentiable almost everywhere.
		
		Consider any non-idling policy $U'\in\cU$, and $\fX^{U'}(t)$ such that $\fX^{U'}(t)\neq \balpha$, $L_{\balpha}(\fX^{U'}(t))<1$.
		The flow of jobs entering $S_2$ is $\sum_{j'\in V_S,k\in S_2}\dot{\bar{A}}_{j'k}(t)$ because $U'$ is non-idling.
		The flow of jobs leaving $S_2$ is at least $\sum_{j'\in V_S:\partial(j')\subset S_2,k\in V_B}\dot{\bar{A}}_{j'k}(t)$ because $U'$ is non-idling and that the jobs in $V_B\backslash S_2$ cannot be served by service tokens originating from $\{j'\in V_S:\partial(j')\subset S_2\}$. Therefore,
		\begin{align}\label{eq:deri_lyap_ub}
		\sum_{k\in S_2}\dot{\bar{X}}_k^{U'}(t)
		\leq\ 
		\sum_{j'\in V_S,k\in S_2}\dot{\bar{A}}_{j'k}(t)
		-
		\sum_{j'\in V_S:\partial(j')\subset S_2,k\in V_B}\dot{\bar{A}}_{j'k}(t)\, .
		\end{align}
		
		Now we consider SMW($\balpha$) policies and $\fX^{\textup{SMW}(\balpha)}(t)$ such that $\fX^{\textup{SMW}(\balpha)}(t)\neq \balpha$.
		For the process $\fE(t)$ (resp. $\fX(t)$), we use notation $\Delta\fE(t)$ (resp. $\Delta\fX(t)$) to denote $\fE(t+\Delta t) - \fE(t)$ (resp. $\fX(t+\Delta t) - \fX(t)$).
		It holds that
		\begin{align*}
		\sum_{k\in S_2}
		\Delta\bar{X}_k^{K,U}(t)
		=\ 
		&\sum_{j'\in V_S,k\in S_2}
		\sum_{i\in \partial(j')}
		\Delta\bar{E}^{K,U}_{ij'k}(t)
		-
		\sum_{i\in S_2,k\in V_B}
		\sum_{j'\in \partial(i)} \Delta\bar{E}^{K,U}_{ij'k}(t)\, .
		\end{align*}
		For regular $t$, it follows from the definition of derivative that
		\begin{align*}
		\sum_{k\in S_2}
		\dot{\bar{X}}_k^{U}(t)
		=\ 
		&\sum_{j'\in V_S,k\in S_2}
		\sum_{i\in \partial(j')}
		\dot{\bar{E}}^{U}_{ij'k}(t)
		-
		\sum_{i\in S_2,k\in V_B}
		\sum_{j'\in \partial(i)} \dot{\bar{E}}^{U}_{ij'k}(t)\, .
		\end{align*}
		
		For SMW($\balpha$) policy, using exactly the same argument as in Lemma 4 of \cite{dai2005maximum}, we have
		\begin{align}\label{eq:smw-fluid-eq}
			\dot{\bar{E}}_{ij'k}^{\textup{SMW}(\balpha)}(t)=\ 0
			\qquad
			\textup{if }\frac{\bar{X}^{\textup{SMW}(\balpha)}_i(t)}{\alpha_i} <\ \max_{\ell\in\partial(j')}\frac{\bar{X}^{\textup{SMW}(\balpha)}_{\ell}(t)}{\alpha_{\ell}}\, .
		\end{align}
		
		By definition of $S_2$, there exists $\epsilon>0$ such that any (scaled) queue length in $S_2$ is strictly smaller than all (scaled) queue lengths in $V_B\backslash S_2$ in $(t,t+\epsilon)$, which also implies that the queue lengths in $V_B\backslash S_2$ remain strictly positive during $(t,t+\epsilon)$.
		Apply \eqref{eq:smw-fluid-eq}, we know that the system will serve the jobs within $V_B\backslash S_2$ by service tokens arriving at $\partial(V_B\backslash S_2)$ during $(t,t+\epsilon)$.
		Hence we have
		\begin{align*}
		\sum_{k\in V_B\backslash S_2}
		\dot{\bar{X}}_k^{\textup{SMW($\alpha$)}}(t)
		=\ 
		&\sum_{j'\in V_S,k\in V_B\backslash S_2} \sum_{i\in \partial(j')}\dot{\bar{E}}^{\textup{SMW($\alpha$)}}_{ij'k}(t)
		-
		\sum_{j'\in \partial(V_B\backslash S_2),k\in V_B} \dot{\bar{A}}_{j'k}(t)\\
		\leq\ 
		&\sum_{j'\in V_S,k\in V_B\backslash S_2} \dot{\bar{A}}_{j'k}(t)
		-
		\sum_{j'\in \partial(V_B\backslash S_2),k\in V_B} \dot{\bar{A}}_{j'k}(t)\, .
		\end{align*}
		
		Since it is a closed system, we have:
		\begin{align}
		\sum_{k\in  S_2}\dot{\bar{X}}_k^{\textup{SMW($\alpha$)}}(t)		
		=\; &-\sum_{k\in V_B\backslash S_2} \dot{\bar{X}}_k^{\textup{SMW($\alpha$)}}(t) \nonumber\\
		\geq\; &\sum_{j'\in \partial(V_B\backslash S_2),k\in V_B} \dot{\bar{A}}_{j'k}(t)
		-
		\sum_{j'\in V_S,k\in V_B\backslash S_2}\dot{\bar{A}}_{j'k}(t)\, .\label{eq:deri_lyap_lb}
		\end{align}
		\medskip

		Note that
		\begin{align*}
		\textup{RHS of \eqref{eq:deri_lyap_lb}}
		=&\; \sum_{j'\in \partial(V_B\backslash S_2),k\in V_B}\dot{\bar{A}}_{j'k}(t)
		-
		\sum_{j'\in V_S,k\in V_B\backslash S_2}\dot{\bar{A}}_{j'k}(t)\\
		= &\; \left(
		\sum_{j'\in V_S,k\in V_B}\dot{\bar{A}}_{j'k}(t)
		-
		\sum_{j':\partial(j')\subset S_2,
			k\in V_B}
		\dot{\bar{A}}_{j'k}(t)
		\right)
		-
		\left(
		\sum_{j'\in V_S,k\in V_B}\dot{\bar{A}}_{j'k}(t)
		-
		\sum_{j'\in V_S,
			k\in S_2}
		\dot{\bar{A}}_{j'k}(t)
		\right)\\
		=&\; \sum_{j'\in V_S,
			k\in S_2}
		\dot{\bar{A}}_{j'k}(t)
		-
		\sum_{j':\partial(j')\subset S_2,
			k\in V_B}
		\dot{\bar{A}}_{j'k}(t) \\
		=&\; \textup{RHS of \eqref{eq:deri_lyap_ub}}\, .
		\end{align*}
		Finally, observe that for any $k\in S_2$,
		\begin{align}\label{eq:deri_lyap_final_step}
		\dot{L}_{\balpha}(\fX(t))
		=
		-\frac{\dot{\bar{X}}_k^{U'}(t)}{\alpha_k}
		=
		-\frac{1}{\mathbf{1}_{S_2}^{\T}\balpha}
		\sum_{k\in S_2}\alpha_k\frac{\dot{\bar{X}}_k^{U'}(t)}{\alpha_k}
		=
		-\frac{1}{\mathbf{1}_{S_2}^{\T}\balpha}
		\sum_{k\in S_2}\dot{\bar{X}}_k^{U'}(t)\, .
		\end{align}
		Plug (\ref{eq:deri_lyap_lb}) and (\ref{eq:deri_lyap_ub}) into (\ref{eq:deri_lyap_final_step}), we know that
		inequality (\ref{eq:lyap_FSP_1}) holds, and it becomes equality for SMW($\balpha$) policy.
	\end{compactitem}
\end{proof}

\subsection{Lyapunov drift of Fluid Limits under SMW: proof of Lemma~\ref{lem:lyapunov_derivative_fluid} }

\begin{proof}[Proof of Lemma~\ref{lem:lyapunov_derivative_fluid}]
	\noindent\textbf{Negative drift.}
	Let $(\fA,\fX)$ be a fluid limit of the system under SMW($\balpha$), and $t$ be its regular point.
	Simply plug in Lemma \ref{lem:lyapunov_derivative}, and replace $\dot{\bar{A}}_{j'k}(t)$ with ${\phi}_{j'k}$, we have ($S_2$ is defined in Lemma \ref{lem:lyapunov_derivative}, $S_2\neq\emptyset$)
	\begin{equation*}
	\begin{split}
	\dot{L}_{\balpha}(\fX(t))
	=\; &
	-\frac{1}{\mathbf{1}^{\T}_{S_2}\balpha}
	\left(\sum_{j'\in V_S,k\in S_2}\dot{\bar{A}}_{j'k}(t)
	-
	\sum_{j'\in V_S:\partial(j')\subseteq S_2,k\in V_B}\dot{\bar{A}}_{j'k}(t)\right)\\
	\leq\; &
	-\min_{S_2\subsetneq V_B,S_2\neq\emptyset}
	\frac{1}{\mathbf{1}^{\T}_{S_2}\balpha}
	\left(\sum_{j'\in V_S,k\in S_2}{\phi}_{j'k}
	-
	\sum_{j'\in V_S:\partial(j')\subseteq S_2,k\in V_B}{\phi}_{j'k}\right)\\
	\leq\; &
	-\min_{S_2\subsetneq V_B,S_2\neq\emptyset}
	\left(\sum_{j'\in V_S,k\in S_2}{\phi}_{j'k}
	-
	\sum_{j'\in V_S:\partial(j')\subseteq S_2,k\in V_B}{\phi}_{j'k}\right)\\
	\stackrel{\textup(a)}{\leq}\; & -\min\{\xi,{\phi}_{\textup{min}}\}\, .
	\end{split}
	\end{equation*}
Here (a) holds for the following reason. First note that when $\fX(t)\neq\balpha$, we have $S_2\neq V_B$.
	Let $J\triangleq \{j'\in V_S:\partial(j')\subset S_2;\exists k\in V_B\backslash S_2\textup{ s.t. }\phi_{j'k}>0\}$. 
	If $J=\emptyset$, we have
	\begin{align*}
	\sum_{j'\in V_S,k\in S_2}{\phi}_{j'k}
	-
	\sum_{j'\in V_S:\partial(j')\subseteq S_2,k\in V_B}{\phi}_{j'k}
	=\ &\sum_{j'\in V_S:\partial(j')\cap(V_B\backslash S_2)\neq\emptyset,k\in S_2}{\phi}_{j'k}
	-
	\sum_{j'\in V_S:\partial(j')\subseteq S_2,k\in V_B\backslash S_2}{\phi}_{j'k}\\
	\geq\ &
	\sum_{j'\in V_S:\partial(j')\cap(V_B\backslash S_2)\neq\emptyset,k\in S_2}{\phi}_{j'k}
	\geq
	{\phi}_{\textup{min}}\, ,
	\end{align*}
	where
	$
	{\phi}_{\textup{min}}\triangleq \min_{j'\in V_B,k\in V_B,{\phi}_{j'k}>0}{\phi}_{j'k}
	$
	is the minimum positive arrival rate for any service token type $(j',k)$ (the last inequality holds because of Assumption 1).
	If $J\neq\emptyset$, we must have $J\in\mathcal{J}$, hence
	$$
	\sum_{j'\in V_S,k\in S_2}{\phi}_{j'k}
	-
	\sum_{j'\in V_S:\partial(j')\subseteq S_2,k\in V_B}{\phi}_{j'k}
	\geq
	\sum_{j'\in V_S,k\in \partial(J)}{\phi}_{j'k}
	-
	\sum_{j'\in J,k\in V_B}{\phi}_{j'k}
	\geq
	\xi\, ,
	$$
	where
	$
	\xi
	\triangleq \min_{J\in\mathcal{J}}
	\left(\sum_{i\in \partial(J)}\mathbf{1}^{\T}{\bphi}_{(i)} - \sum_{j'\in J}\mathbf{1}^{\T}{\bphi}_{j'}\right)>0
	$
	is the Hall's gap  of the system.

		\noindent \textbf{Robustness of drift. }
		Define
		\begin{equation*}
		G(\bof) \triangleq \min_{S\subsetneq V_B,S\neq\emptyset}
		\left(\sum_{j'\in V_S,k\in S} f_{j'k}
		-
		\sum_{j':\partial(j')\subseteq S,k\in V_B} f_{j'k}\right)\, .
		\end{equation*}
		Note that $G(\bof)$ is continuous in $\bof$.
		Since $G({\bphi})\leq -\min\{\xi,{\phi}_{\textup{min}}\}<0$, by continuity there exists $\epsilon$ such that for any $\dot{\fA}(t)\in B({\bphi},\epsilon)$,
		\begin{equation*}
		\dot{L}_{\balpha}(\fX(t))=
		G\left(\dot{\fA}(t)\right) \leq -\frac{1}{2}\min\{\xi,{\phi}_{\textup{min}}\}\, .
		\end{equation*}
\end{proof}

\subsection{Explicit exponent and most likely sample path: proof of Lemma \ref{lem:explicit_gamma}}
\begin{proof}[Proof of Lemma \ref{lem:explicit_gamma}]
	\noindent\textbf{Explicit exponent.}
Let $(\fA(\cdot), \fX(\cdot))$ be a fluid sample path under SMW($\balpha$).
For a regular point $t$ of this FSP, denote $\bof\triangleq \dot{\fA}(t)$.

For notation simplicity, for $S\subset V_B$ denote
\begin{align*}
	\textup{gap}_{S}(\bof)
	\triangleq
	\sum_{j':\partial(j')\subseteq S,k\in V_B} f_{j'k}
	-\sum_{j'\in V_S,k\in S} f_{j'k}\, .
\end{align*}
In words, $\textup{gap}_{S}(\bof)$ is the minimum net rate at which jobs in $S$ is drained given current service token arrival rate $\bof$, assuming no service token is wasted.
Using the result of Lemma \ref{lem:lyapunov_derivative}, we have:
\begin{align}
	\dot{L}_{\balpha}(\fX(t))
	=\
	\frac{\textup{gap}_{S_2}(\bof)}{\mathbf{1}_{S_2}^{\T}\balpha}\, ,
\label{eq:barv-def}
\end{align}
where $S_2\triangleq S_2(\fX(t),\dot{\fX}(t))$ and the latter is defined in Lemma \ref{lem:lyapunov_derivative}.
Given $\dot{\fA}(t)=\bof$, we define
\begin{align*}
    \bar{v}(\bof)
	\triangleq\ 
	\sup_{\fX(t)\in\Omega\backslash\{\balpha\}}\dot{L}_{\balpha}(\fX(t))
    =\ 
    \max_{S\neq\emptyset,S\subsetneq V_B}
	\frac{\textup{gap}_{S}(\bof)}{\mathbf{1}_{S}^{\T}\balpha}\, .
\end{align*}
Recall the definition of $\gamma_{{\textup{\tiny AB}}}(\balpha)$ in Lemma \ref{lem:lower_bound_vj}, we have
%
	\begin{align}
	\gamma_{{\textup{\tiny AB}}}(\balpha)
	=\;
	\inf_{\bof\geq\bzero:\bar{v}(\bof)>0}
	\frac{\Lambda^*(\bof)}
	{\bar{v}(\bof)}
	&
	=\;
	\inf_{\bof\geq\bzero:\max_{S\subseteq V_B}
		\textup{gap}_{S}(\bof)>0}
	\frac{\Lambda^*(\bof)}
	{\max_{S\subseteq V_B}
		\frac{\textup{gap}_{S}(\bof)}{\mathbf{1}_{S}^{\T}\balpha}
}\nonumber\\
&
=\;
\inf_{\bof\geq\bzero:\max_{S\subseteq V_B}
	\textup{gap}_{S}(\bof)>0}
\left\{\min_{S \subseteq V_B:\textup{gap}_S(\bof)>0}
\left(\mathbf{1}_{S}^{\T}\balpha\right)
\frac{\Lambda^*(\bof)}
{\textup{gap}_{S}(\bof)}
\right\}\label{eq:gammaAB_technical_1}\\
&
\stackrel{(a)}{=}\;
\min_{S \subseteq V_B}
\left\{\inf_{\bof\geq\bzero:\textup{gap}_S(\bof)>0}
\left(\mathbf{1}_{S}^T\balpha\right)
\frac{\Lambda^*(\bof)}
{\textup{gap}_{S}(\bof)}
\right\}\, .\label{eq:gammaAB_technical_2}
\end{align}
For completeness, define the minimum over the empty set as $+\infty$.
Here $(a)$ holds because: For a minimizer $\bof^*\geq\bzero$ of the outer problem of (\ref{eq:gammaAB_technical_1}) and a minimizer $S^*\subseteq V_B$ of the inner problem of (\ref{eq:gammaAB_technical_1}),
$S^*\subseteq V_B$ is feasible for the inner problem of
(\ref{eq:gammaAB_technical_2}) while $\bof^*\geq\bzero$ is feasible for the outer problem of (\ref{eq:gammaAB_technical_2}), hence $(\ref{eq:gammaAB_technical_1})\geq (\ref{eq:gammaAB_technical_2})$.
Similarly we can show $(\ref{eq:gammaAB_technical_1})\leq (\ref{eq:gammaAB_technical_2})$.

We claim that
\begin{align}
	(\ref{eq:gammaAB_technical_2})
	=\;
	\min_{J \in\mathcal{J}}
	\left\{\inf_{\bof\geq\bzero:\textup{gap}_{\partial(J)}(\bof)>0}
	\left(\mathbf{1}_{\partial(J)}^{\T}\balpha\right)
	\frac{\Lambda^*(\bof)}
	{\textup{gap}_{\partial(J)}(\bof)}
	\right\}\, .\label{eq:gammaAB_technical_3}
\end{align}
Recall that the definition of $\mathcal{J}$:
\begin{equation*}
	\mathcal{J} = 
	\left\{
	J\subsetneq V_S:
	\sum_{j'\in J}\sum_{k\notin \partial(J)}\phi_{j'k}>0
	\right\}.
\end{equation*}
To see \eqref{eq:gammaAB_technical_3}, first note that for $S\subseteq V_B$ where $\{j'\in V_S:\partial(j')\subset S\}$ is empty, $\textup{gap}_{S}(\bof)$ is non-positive regardless of $\bof\geq\bzero$,
hence such $S$ can never be the minimizer.
For other $S$, let $J\triangleq\{j'\in V_S:\partial(j')\subset S \}$, then $\partial(J)\subset S$.
Note that
\begin{align*}
\textup{gap}_{\partial(J)}(\bof)
&=\;
\sum_{j'\in J, k\in V_B} f_{j'k} - \sum_{j' \in V_S,k\in \partial(J)} f_{j'k}\\
&=\;
\sum_{j':\partial(j')\subset S, k\in V_B} f_{j'k}
- \sum_{j' \in V_S,k\in S} f_{j'k}
+ \sum_{j' \in V_S,k\in S\backslash\partial(J)} f_{j'k}\\
&=\; \textup{gap}_{S}(\bof) + \sum_{j' \in V_S,k\in S\backslash\partial(J)} f_{j'k}\\
&\geq\;  \textup{gap}_{S}(\bof)\, .
\end{align*}
As a result, for $\bof$ such that $\textup{gap}_{S}(\bof)>0$, we have
\begin{align*}
	\left(\mathbf{1}_{S}^{\T}\balpha\right)
	\frac{\Lambda^*(\bof)}
	{\textup{gap}_{S}(\bof)}
	\geq\;
	\left(\mathbf{1}_{\partial(J)}^{\T}\balpha\right)
	\frac{\Lambda^*(\bof)}
	{\textup{gap}_{\partial(J)}(\bof)}\, .
\end{align*}
Hence only those $S\subseteq V_B$ where $S=\partial(J)$ for $J\subseteq V_S$ can be the minimizer.
If $J\notin\mathcal{J}$, then $\textup{gap}_{\partial(J)}(\bof)\leq 0$ regardless of $\bof\geq\bzero$, so these sets are also ruled out.
Therefore (\ref{eq:gammaAB_technical_3}) holds.

\yka{The last equality holds because:  For any $A\subseteq V_B$ let $J_A \triangleq \{i' \in V_S: \partial i' \subseteq A\}$. By definition, $\partial(J_A)  \subseteq A$. In finding $\min_{A\subseteq V_B}(\cdot)$ we can restrict attention to $A \subseteq V_B$ such that $\partial(J_A) = A$, since, given $J_A$ and for any $\bof$, adding more nodes to $A$ only reduces $\textup{gap}_A(\bof)$ (by increasing only the \emph{inflow} to $A$), and hence does not affect $\min_{A\subseteq V_B}(\cdot)$ since $\textup{gap}_A(\bof)$ appears in the denominator. This allows us to move from considering subsets $A$ of buffers, to subsets $J$ of servers with their associated buffer neighborhood $\partial(J)$. Further, we can safely ignore subsets $J \subseteq V_S$ such that $\sum_{i'\in J}\sum_{j\notin \partial(J)}\phi_{i'j}=0$, given that $\textup{gap}_{\partial(J)}(\bof) \leq 0$ for such $J$. We are left with $J \in \mathcal{J}$ where $\mathcal{J} = \{ J \subseteq V_S: \sum_{i'\in J}\sum_{j\notin \partial(J)}\phi_{i'j}>0\}$.}

Suppose the outer minimum of \eqref{eq:gammaAB_technical_3} is achieved by $J^*\in\mathcal{J}$.
	Denote the optimal value of the inner infimum of (\ref{eq:gammaAB_technical_3}) as $(\mathbf{1}^{\T}_{\partial(J^*)}\balpha)g({\bphi},J)>0$, then we have:
	\begin{equation}\label{eq:fraction_to_equation}
	\inf_{\bof\geq\bzero:\textup{gap}_{\partial(J)}(\bof)>0}
	\Lambda^*(\bof)
	- g({\bphi},J) \left(\sum_{j'\in J,k\in V_B}f_{j'k}
	-\sum_{j'\in V_S,k\in \partial(J)}f_{j'k}\right)
	=0\, .
	\end{equation}
	We can get rid of the constraint on $\bof$ because
	for $\bof$ where $\textup{gap}_{\partial(J)}(\bof)\leq 0$,
	the argument of minimization in (\ref{eq:fraction_to_equation}) is negative;
	and for $\bof$ that has negative components, its rate function is $\infty$ by definition.
	Using Legendre transform, we have:
	\begin{align*}
	&\inf_{\bof}
	\Lambda^*(\bof)
	- g({\bphi},J) \left(\sum_{j'\in J,k\in V_B}f_{j'k}
	-\sum_{j'\in V_S,k\in \partial(J)}f_{j'k}\right)\\
	=\; &\inf_{\bof}
	\Lambda^*(\bof)
	-
	\bof^{\T}
	\left(g({\bphi},J)\sum_{j'\in J,k\in V_B}\e_{j'k}
	-g({\bphi},J)\sum_{j'\in V_S,k\in \partial(J)}\e_{j'k}\right)\\
	=\; &-\Lambda\left(g({\bphi},J)\sum_{j'\in J,k\in V_B}\e_{j'k}
	-g({\bphi},J)\sum_{j'\in V_S,k\in \partial(J)}\e_{j'k}\right)\\
	\stackrel{(b)}{=}\; &- \sum_{j'\in V_S,k\in V_B}{\phi}_{j'k} \left( e^{g({\bphi},J)\ind \{j'\in J\}-g({\bphi},J)\ind \{k\in \partial(J)\}} - 1 \right) \, .\\
	\end{align*}
	In (b) we use the fact that the dual function of $\Lambda^*(\bof)$ is $\Lambda(\bx)= \sum_{j' \in V_S,k\in V_B}{\phi}_{j'k} (e^{x_{j'k}} - 1) $ where $\bx\in\mathbb{R}^{n\times m}$.
	Hence
	Eq.~(\ref{eq:fraction_to_equation}) reduces to the nonlinear equation
	\begin{align*}
	\left(
	\sum_{j'\notin J,k\in \partial(J)}{\phi}_{j'k}
	\right)e^{-g({\bphi},J)}
	+
	\left(
	\sum_{j'\in J,k\notin \partial(J)}{\phi}_{j'k}
	\right)
	e^{g({\bphi},J)}
	=
	\sum_{j'\notin J,k\in \partial(J)}{\phi}_{j'k}
	+ \sum_{j'\in J,k\notin \partial(J)}{\phi}_{j'k}\, .
	\end{align*}
	Let $y\triangleq e^{g({\bphi},J)}$, this becomes a quadratic equation:
	\begin{align*}
	\left(
	\sum_{j'\in J,k\notin \partial(J)}{\phi}_{j'k}
	\right)
	y^2
	-
	\left(\sum_{j'\notin J,k\in \partial(J)}{\phi}_{j'k}
	+ \sum_{j'\in J,k\notin \partial(J)}{\phi}_{j'k}\right)y
	+
	\left(
	\sum_{j'\notin J,k\in \partial(J)}{\phi}_{j'k}
	\right)
	=0\, .
	\end{align*}
	Hence
	\begin{equation*}
	y = \frac{\sum_{j'\notin J,k\in \partial(J)}{\phi}_{j'k}}
	{\sum_{j'\in J,k\notin \partial(J)}{\phi}_{j'k}}\text{ or $1$}\, .
	\end{equation*}
	Since $g({\bphi},J)>0$, we have
	\begin{equation*}
	g({\bphi},J) =\ \log\left(
	\frac{\sum_{j'\notin J,k\in \partial(J)}{\phi}_{j'k}}
	{\sum_{j'\in J,k\notin \partial(J)}{\phi}_{j'k}}
	\right)
	=\ 
	\log\left(
	\frac{\sum_{j'\notin J,k\in \partial(J)}\phi_{j'k}}
	{\sum_{j'\in J,k\notin \partial(J)}\phi_{j'k}}
	\right)
	\, .
	\end{equation*}
\yka{What is the $\bof$ which allows to achieve this $g(\phi,J)$? Is it the one that sets $f_{i' \in J, j \in (\partial J)^c} = R \phi_{j'k}, f_{i' \in J^c, j \in (\partial J)} = \phi_{j'k}/R, f_{i' \in J, j \in (\partial J)} = \phi_{j'k}, f_{i' \in J^c, j \in (\partial J)^c} =  \phi_{j'k} $ for $R=\phi_{ J, (\partial J)^c}/\phi_{ J^c, \partial J}$?}
	Plugging into \eqref{eq:gammaAB_technical_3}, we have:
	\begin{equation*}
	\gamma_{{\textup{\tiny AB}}}(\balpha)
	=
	\min_{J\in\mathcal{J}}
	\left(\mathbf{1}^{\T}_{\partial(J)} \balpha\right)
	\log\left(
	\frac{\sum_{j'\notin J,k\in \partial(J)}\phi_{j'k}}
	{\sum_{j'\in J,k\notin \partial(J)}\phi_{j'k}}
	\right)\, .
	\end{equation*}
\yka{Intuitively, there is no slack in \eqref{eq:barv-def} because the adversary will implement a radial solution, so the subset $A$ attacked will be the $A_2(\bX[t], d\bX[t](\bA[t], \alpha)/dt)$. Must write this somewhere. Maybe in the converse section.}

\emph{Remark:} For $J\in\mathcal{J}$, if there exists $j'\in V_S$ such that $j'\notin J$ but $\partial(j') \subseteq \partial(J)$, then such subsets $J$ are ``spurious'' in the sense that they cannot achieve the minimum in the expression of $\gamma_{{\textup{\tiny AB}}}(\balpha)$ (the term corresponding to $J\cup\{j'\}$ is no larger than the term correpsonding to $J$). Therefore only the ``maximal'' $J$'s matter to the value of exponent.
\vspace{1pt}\\

\noindent\textbf{Most likely service token sample path leading to throughput loss.}
Denote
\begin{align*}
	\mathbf{c}\triangleq g({\bphi},J)\left(\sum_{j'\in J,k\in V_B}\e_{j'k}
	-\sum_{j'\in V_S,k\in \partial(J)}\e_{j'k}\right)\, ,
\end{align*}
denote $\bof_J$ as the minimizer of the inner minimization problem on the RHS of (\ref{eq:gammaAB_technical_3}). We have
\begin{align*}
	\bof_J =\ &\textup{argmin}_{\bof\geq \bzero}
	\sum_{j'\in V_S} \sum_{k\in V_B} \left(\Lambda_{j'k}^*( f_{j'k}) - c_{j'k} f_{j'k}\right) \\
	=\ &\textup{argmin}_{\bof\geq \bzero}
	\sum_{j'\in V_S} \sum_{k\in V_B} \left( f_{j'k} \log\frac{ f_{j'k}}{{\phi}_{j'k}} +  {\phi}_{j'k} -  f_{j'k} - c_{j'k} f_{j'k}\right)\, .
\end{align*}
First order condition implies: $(\bof_J)_{j'k}
	=
	{\phi}_{j'k} \frac{e^{c_{j'k}+1}}{\sum_{j',k}{\phi}_{j'k}e^{c_{j'k}+1}}
	=
	{\phi}_{j'k} \frac{e^{c_{j'k}}}{\sum_{j',k}{\phi}_{j'k}e^{c_{j'k}}}
	$.
Recall the definition of $\lambda_J$, $\mu_J$ in (\ref{eq:gamma_w}), we have
\begin{align*}
\sum_{j',k}{\phi}_{j'k}e^{c_{j'k}}
&=\;
\sum_{j'\in J,k\notin \partial(J)}{\phi}_{j'k}\frac{\lambda_J}{\mu_J}
+
\sum_{j'\notin J,k\in \partial(J)}{\phi}_{j'k}\frac{\mu_J}{\lambda_J}
+
\left(
1-
\sum_{j'\in J,k\notin \partial(J)}{\phi}_{j'k}
-
\sum_{j'\notin J,k\in \partial(J)}{\phi}_{j'k}
\right)\\
&=\;
\mu_J\frac{\lambda_J}{\mu_J}
+
\lambda_J\frac{\mu_J}{\lambda_J}
+
\left(
1-
\lambda_J
-
\mu_J
\right)
=\; 1\, .
\end{align*}
Hence
\begin{align*}
	(\bof_J)_{j'k}=
	\left\{
	\begin{array}{ll}
	{\phi}_{j'k}(\lambda_{J}/\mu_{J}),&\text{ for }j'\in J,k\notin\partial(J)\\
	{\phi}_{j'k}(\mu_{J}/\lambda_{J}),&\text{ for }j'\notin J,k\in\partial(J)\\
	{\phi}_{j'k},&\text{ otherwise}
	\end{array}
	\right..
\end{align*}
Let $J^* = \textup{argmin}_{J\in\mathcal{J}}B_J \log(\lambda_J/\mu_J)$, then service token sample path with constant derivative $\bof_{J^*}$ is the most likely sample path leading to throughput loss.
\end{proof}

\section{Necessity of our assumptions and the inferiority of state-independent control}\label{appen:sec:crp}

This section shows the necessity of our assumptions, and of state-dependent control, including the proofs of Propositions \ref{prop:NT-is-necessary}, \ref{prop:hall_is_necessary} and \ref{prop:state_ind_no_exp}. 

\subsection{Necessity of Assumption \ref{asm:non_trivial}: proof of Proposition \ref{prop:NT-is-necessary}}
\begin{proof}[\textbf{Proof of Proposition~\ref{prop:NT-is-necessary}}.]
We define the following policy $U$ which ensures no throughput loss in the long run, i.e.,  $\mathbb{P}^{\N,U} = 0$. Arbitrarily choose $n$ of the $K$ jobs and dedicate one of the chosen jobs to each of the servers. Suppose the job dedicated to server $j'$ is initially at buffer $i$. Since Assumption~\ref{asm:connectivity} is satisfied, there is a way to move the job from $i$ to a buffer compatible with $j'$ in a finite (random) time. Move the job to some node in $\partial(j')$. Similarly, move each of the $n$ dedicated jobs into the neighborhood of the corresponding server. All this is completed in an initial transient of finite (random) duration (the expected duration is also finite).  Thereafter, for each service token arrival, use it to serve the job dedicated to the origin of service token. We are guaranteed that the destination $k \in \partial(j')$, i.e., the job remains within the neighborhood of $j'$ after completing service (we are told that service token types with $k \notin \partial(j')$ have zero arrival rate $\phi_{j'k}=0$).
\end{proof}

\subsection{Necessity of CRP condition: proof of Proposition~\ref{prop:hall_is_necessary}}
\begin{proof}[Proof of Proposition~\ref{prop:hall_is_necessary}]
	There are two cases:
\medskip

\noindent\textbf{Case 1:} \textit{There exists $J \subsetneq V_S$ s.t. $ \lambda_J < \mu_J \Longleftrightarrow \sum_{i\in \partial(J)}\mathbf{1}^{\T}\phi_{(i)}<\sum_{j'\in J}\mathbf{1}^{\T} \phi_{j'}$.}

\smallskip The main proof idea in this case is simply that since the net job arrival rate to $\partial(J)$ is less than the net service rate in $J$, a positive fraction of service tokens must be wasted. \yka{I replaced $KT$ by $T$ and made a bunch of other changes.}
		
	    Consider the following balance equation:
		\begin{align*}
		& \;\#\{\text{service tokens originating in $J$ during $[0,T]$ which are wasted}\}\\
		=& \;
		\#\{\text{service tokens originating in $J$ during $[0,T]$}\}
		\\ &\quad -
		\#\{\text{service tokens originating in $J$ during $[0,T]$ which are utilized}\}\\
		\geq& \;
		\#\{\text{service tokens originating in $J$ during $[0,T]$}\}
		-
		\#\{\text{jobs served from $\partial(J)$ during $[0,T]$}\}\\
		\geq& \;
		\sum_{r:t_r\in[0,T]}\ind \{o[r]\in J\}
		-
		\sum_{r:t_r\in[0,T]}\ind \{d[r]\in \partial(J)\}
		-
		\#\{\text{initial jobs in $\partial(J)$}\}\, .
		\end{align*}
		The first inequality holds because the service tokens originating in $J$ can only serve jobs from $\partial(J)$. The second inequality holds because the total number of jobs served from $\partial(J)$ during $[0,T]$ cannot exceed the initial jobs there plus the number of service token arrivals with destination in  $\partial(J)$.
		Divide both sides by $A_{\Sigma}(T)$ which is the total number of service token arrivals during $[0,T]$, and let $T\to\infty$.
		By the strong law of large numbers, we have:
		\begin{align*}
		\liminf_{T\to\infty}
		\{\text{fraction of service token wasted in $[0,T]$}\}
		\geq
		\sum_{j'\in J}\mathbf{1}^{\T} \phi_{j'}
		-\sum_{i\in \partial(J)}\mathbf{1}^{\T}\phi_{(i)}
		>0\, .
		\end{align*}
		Hence a positive fraction of service token will be wasted in the long run, and the loss exponent is $0$.
		
		\bigskip
		
\noindent\textbf{Case 2:} \textit{We have $\lambda_{J'} \geq \mu_{J'}$ for all $J' \in \cJ$ but there exists $J\in\mathcal{J}$ such that $\lambda_J = \mu_J \Leftrightarrow \sum_{i\in \partial(J)}\mathbf{1}^{\T}\phi_{(i)}=\sum_{j'\in J}\mathbf{1}^{\T} \phi_{j'}$.}
\smallskip

The high-level idea in this case is that if all the service tokens originating in $J$ are utilized (if possible), then, at best, the total quantity of jobs in $\partial(J)$ follows an unbiased random walk on $0,1, \dots, K$. Such a random walk spends a positive fraction of time at $0$, and all service tokens originating in $J$ when there is zero job in $\partial(J)$ is lost. The proof is somewhat more intricate than this argument may suggest; in particular because we need to allow for idling policies (those which sometimes waste service tokens even though a job is available at a neighboring buffer).

		Divide the\yka{deleted ``discrete time periods''} service token arrivals into cycles with $M\N^2$ arrivals each, where
		$$M\triangleq  \frac{1}{\mu_J}\, ,$$
		for $ \mu_J = \sum_{j'\in J,k\notin \partial(J)}\phi_{j'k} > 0$ as before.
		Without loss of generality, consider the first cycle $t_1,\cdots,t_{MK^2}$.
		Define random walk $S_r$ with the following dynamics:
		\begin{compactitem}[leftmargin=*]
			\item $S_0  = \mathbf{1}_{\partial(J)}^{\T}\bX(0)$.
			\item $S_{r+1}  = S_r  + 1$ if $o[r]\notin J,d[r]\in \partial(J)$.
			\item $S_{r+1}  = S_r  - 1$ if $o[r]\in J,d[r]\notin \partial(J)$.
			\item $S_{r+1}  = S_r$ otherwise.
		\end{compactitem}
It is not hard to see that if no service token is wasted during $r \leq MK^2$ under some policy $U$, then $S_r $ is a pathwise upper bound on the number of jobs in $\partial(J)$, namely, $\mathbf{1}_{\partial(J)}^{\T} \X(t_r)$, for any $r \leq MK^2$.
	With this observation, we have:
		\begin{align}
		\mathbb{P}\left(\text{some token is wasted during } r \leq M\N^2\right)\ 
		\geq\ \mathbb{P}\left(S_{r'}=0\text{ for some }r' < M\N^2\right)\cdot (\mathbf{1}^T \phi_{j'})\, .
		\label{eq:demand-loss-lb}
		\end{align}
The above is true because when the event on RHS happens, either (1) some service token is wasted before $t_r'$, or (2) no service token is wasted before $t_{r'}$, then since $0=S_{r'}\geq\mathbf{1}_{\partial(J)}^{\T} \X(t_{r'})$ we have $\mathbf{1}_{\partial(J)}^{\T} \X(t_{r'})=0$ and so any service token with origin in $J$ is wasted at $t_{r'+1}$. Importantly, \eqref{eq:demand-loss-lb} holds for \emph{any} policy.

		For the given $J$ we have $\lambda_J = \mu_J > 0$ and so $S_r$ is a ``lazy'' simple random walk, which takes a step with probability $2 \mu_J$ independently at each $r$.
		Define the stopping time $\tau$ as 
		\begin{align*}
		    \tau  \triangleq \inf \left \{ r \in \mathbb{Z}_+: S_r \in \big\{\mathbf{1}_{\partial(J)}^{\T}\X(0)-\N,\mathbf{1}_{\partial(J)}^{\T}\X(0)+\N\big\} \right \} \, .
		\end{align*} 
		Using
		\cite[Example 4.1.6,][]{durrett2010probability} on the lazy simple random walk $S_r - \mathbf{1}_{\partial(J)}^{\T}\X(0)$, we obtain\footnote{Since $S_r- \mathbf{1}_{\partial(J)}^{\T}\X(0)$ is a lazy version of a simple random walk, which takes a step with probability $2 \mu_J$ independently at each time, the expectation of the time $\tau$ to hit $\pm K$ is inflated by a factor of $1/(2\mu_J)$ relative to that of a simple random walk (this follows from using the natural coupling between the steps in the two walks, and noting that the lazy walk takes expected time $1/(2\mu_J)$ between consecutive steps).}
		\begin{equation*}
		\mathbb{E}[\tau] = \frac{\N^2}{2\mu_J}\, .
		\end{equation*}
		Using Markov's inequality, we have
		\begin{align*}
		    \mathbb{P} \left(\, \tau \geq MK^2 \,\right) \leq \frac{	\mathbb{E}[\tau]}{MK^2} = \frac{1}{2}
		\end{align*}
		By symmetry
		\begin{align}
		    \mathbb{P} \left(\, S_\tau - \mathbf{1}_{\partial(J)}^{\T}\X(0)= -K \ \textup{and} \ \tau < mK^2 \right ) = \frac{1}{2} \mathbb{P} \left( \tau < mK^2 \right) \geq \frac{1}{2} \cdot \frac{1}{2} = \frac{1}{4}  \, .
		    \label{eq:RW-lb}
		\end{align}
		Now, $S_\tau - \mathbf{1}_{\partial(J)}^{\T}\X(0)= -K$ and $\tau < MK^2$, i.e., $S_r$ hits $\mathbf{1}_{\partial(J)}^{\T}\X(0)-K$ during $r< MK^2$, implies  that $S_{r}$ hits $0$ during $ t < MK^2$, since $S_r$ must hit $0$ (weakly) before it hits $\mathbf{1}_{\partial(J)}^{\T}\X(0)-K$. Hence, plugging \eqref{eq:RW-lb} into \eqref{eq:demand-loss-lb} we obtain that
		\begin{align*}
		   \mathbb{P}\left(\text{some service token is wasted during } r \leq M\N^2\right)
		\geq\ & \frac{\mathbf{1}^{\T} \phi_{j'}}{4}\, , 
		\end{align*}
		and this uniform and strictly positive lower bound holds for \emph{any} policy, during \emph{any} cycle consisting of $MK^2$ consecutive arrivals.
It follows that
	$
		{\bbp}^{\N,U} = \Omega\left(\frac{1}{\N^2}\right)$, and hence $\gamma(U)=0$ for any $U$.
\end{proof}

\subsection{Necessity of state-dependent control: proof of Proposition \ref{prop:state_ind_no_exp}}
\begin{proof}[Proof of Proposition \ref{prop:state_ind_no_exp}]
\ \\
\emph{Proof of first part.} For notation simplicity, denote $X(t_r)$ by $X[r]$, similar for another notations.
Denote the probability mass function of distribution $u_{j'k}[t]$ by $u_{j'k}[t](\cdot)$.
	We first define an ``augmented'' policy $\tilde{U}$ for any state-independent policy $U$.
	Policy $\tilde{U}$ is also state independent with distribution $\tilde{u}_{j'k}[t]$, where:
	\begin{align*}
	\tilde{u}_{j'k}[t](i)
	& =
	u_{j'k}[t](i)
	+
	\frac{1}{|\partial(j')|}u_{j'k}[t](\emptyset)\quad \textup{for }i\in\partial(j')\, ,\\
\tilde{u}_{j'k}[t](\emptyset)& =0\, .
	\end{align*}
	In the following analysis, we couple $U$ and $\tilde{U}$ in such a way that if $U$ serves job in $i$ with the $t$-th service token, then $\tilde{U}$ will do the same.
	
	Divide the service token arrivals into cycles with $K^2$ arrivals each.
	We will lower bound the probability of throughput loss in any cycle.
	Without loss of generality, consider the first cycle $[1,K^2]$.
	Suppose $\bX^{\N,U}[0]=\X_0$.
	By Assumption \ref{asm:non_trivial}, $\exists j' \in V_S,\ k\notin \partial(j')\subset V_B$ such that $\phi_{j'k}>0$.  \yka{Why can't we just take $J = \{i\}$? Also, I would rather name the o-d pair as $\phi_{j' k}>0$ within this proof, instead of $i',j$ which conflicts with the notation in the para above.}
	Consider the random walk $S_t$ with the following dynamics, which is the ``virtual'' net change of number of jobs in $\partial(j')$:
	\begin{compactitem}[leftmargin=*]
		\item $S_0  = 0$.
		\item $S_{t+1} = S_t  + 1$ if $d[t]\in \partial(j')$ and policy $\tilde{U}$ serves a job from outside of $\partial(j')$ using it (regardless of whether there is available job to serve).
		\item $S_{t+1} = S_t  - 1$ if $d[t]\notin \partial(j')$ and policy $\tilde{U}$ serves a job from $\partial(j')$ using it (regardless of whether there is available job to serve).
		\item $S_{t+1} = S_t $ if otherwise.
	\end{compactitem}
	
	Using similar argument as in eq. \eqref{eq:demand-loss-lb}, we have
	\begin{align}
	&\mathbb{P}\left(\text{some service token is wasted in epoch }[1,\N^2]\right) \nonumber\\
\geq\ &\mathbb{P}\left(S_{K^2}+\mathbf{1}_{\partial(j')}^{\T} \X_0> \N\text{ or }< 0\right)
    \geq \mathbb{P}\left(|S_{K^2}|>K\right) \label{eq:indep_mg}\, .
	\end{align}
	
	Note that $S_{K^2} $ is the sum of $\N^2$ independent random variables $Z_t$, where $Z_t=S_{t}-S_{t-1} $. Here independence holds because we ignore throughput losses in the definition of the process.
	Here $Z_t$ has support $\{-1,0,1\}$ and satisfies:
	\begin{equation}
	\mathbb{P}(Z_t=-1)\geq \delta\triangleq \phi_{j'k} >0\, ,
\label{eq:state-indep-delta-def}
	\end{equation}
where $k\notin \partial(j)$.
	There are two cases:\yka{Maybe number the cases 1 and 2, since you may need subcases 2(i) and 2(ii) and we want to avoid confusion.}
	\begin{compactenum}[label=\arabic*.,leftmargin=*]
		\item If $\mathbb{E}[S_{K^2} ]\leq -\frac{\N^2}{2}$, then for $\N\geq 8$, we have
		\begin{align*}
		1-\mathbb{P}\left(S_{K^2}
		\in [-\N,\N]\right)
		\geq& \;
		1-\mathbb{P}\left(S_{K^2}
		-
		\mathbb{E}[S_{K^2} ]
		\geq
		-\N + \frac{\N^2}{2}
		\right)\\
		\geq& \;
		1 - 2\exp \left(-\frac{\N^2}{32}\right)
		\qquad
		(\text{Hoeffding's inequality, $-K + K^2/2 \geq K^2/4$})\\
		\geq& \; \frac{1}{2}\, .
		\end{align*}
Plugging into \eqref{eq:indep_mg} establishes that service token is wasted with likelihood at least $1/2$.
		\item If $\mathbb{E}[S_{K^2} ]> -\frac{\N^2}{2}$, then using linearity of expectation and simple algebra we obtain that the number of $t$'s such that $\mathbb{E}[Z_t]\geq -\frac{3}{4}$ is at least $\frac{\N^2}{7}$.
		
		Denote the set of these $t$'s as $\mathcal{T}$.
		Hence
		\begin{equation}\label{eq:var_nonind}
		\begin{split}
		K^2\geq\text{Var}(S_{K^2} ) = \sum_{t=1}^{\N^2}\text{Var}(Z_t)
		\geq
		\sum_{t\in\mathcal{T}}\text{Var}(Z_t)
		\geq
		\frac{\N^2}{7}\cdot \delta \left(1-\frac{3}{4}\right)^2
		=
		\frac{\delta}{102}K^2\, ,
		\end{split}
		\end{equation}
using \eqref{eq:state-indep-delta-def}.
				
		Note from \eqref{eq:indep_mg} that to show a constant lower bound of throughput-loss probability on $[1,K^2]$,
		it suffices to derive a uniform upper bound on $\mathbb{P}\left(S_{K^2}
		\in [-\N,\N]\right)$ that is strictly smaller than $1$.
		To this end, apply Theorem 7.4.1 in \cite{chung2001course} (Berry-Esseen Theorem) to obtain:
		\begin{align}\label{eq:berry_esseen}
		 \;\sup_{x\in\mathbb{R}}\left|
		\mathbb{P}\left(S_{K^2} -\mathbb{E}[S_{K^2} ]\leq x\sqrt{\text{Var}[S_{K^2} ]}\right)
		-\Phi(x)
		\right|
		\leq  \;
		\frac{\sum_{t=1}^{\N^2}\mathbb{E}|Z_t-\mathbb{E}Z_t|^3}{\left(\text{Var}[S_{K^2} ]\right)^{3/2}}
		\leq  \; \frac{5000}{K \delta^{3/2}}\, ,
		\end{align}
		where $\Phi(\cdot)$ is the cumulative distribution function of the standard normal distribution.
		
		Denote $B(x,a)\triangleq [x-a,x+a]$.
		Note that there are two subcases (indexed 2(i) and 2(ii)):
		\begin{align*}
		[-\N,\N]\subset B\left(\mathbb{E}[S_{K^2} ],4\N\right),\qquad
			[-\N,\N]\cap B\left(\mathbb{E}[S_{K^2} ],2\N\right)=\emptyset\, .
		\end{align*}
\yka{This is confusing because the second event implies the third event. The preceding sentence instead suggests you are going to define mutually exclusive events. I think you wanted to delete the second event. Also, I think it would be better to call these ``subcases'' rather than ``events'' since they can be checked a priori instead of waiting for some random variables to be  exposed. With random events, one should work carefully with conditional probabilities etc.}
In subcase 2(i),
$
  \mathbb{P}\left(S_{K^2}
			\in [-\N,\N]\right)
			\leq
			\mathbb{P}\left(
			S_{K^2} \in B\left(\mathbb{E}[S_{K^2} ],4\N\right)
			\right)\, ,
$
whereas in subcase 2(ii),
\begin{align*}
  \mathbb{P}\left(S_{K^2}
			\in [-\N,\N]\right)
			\leq
			1-
			\mathbb{P}\left(
			S_{K^2} \in B\left(\mathbb{E}[S_{K^2} ],2\N\right)\right)\, .
\end{align*}
Hence
		\begin{align}
			\mathbb{P}\left(S_{K^2}
			\in [-\N,\N]\right)
			\leq
			\max\left\{
			\mathbb{P}\left(
			S_{K^2} \in B\left(\mathbb{E}[S_{K^2} ],4\N\right)
			\right)
			,
			1-
			\mathbb{P}\left(
			S_{K^2} \in B\left(\mathbb{E}[S_{K^2} ],2\N\right)\right)
			\right\}\, .
\label{eq:ub-S-mK-K}
		\end{align}
		Use \eqref{eq:berry_esseen} and $\text{Var}(S_{K^2} ) \leq K^2$ to obtain
		\begin{align*}
			\mathbb{P}\left(
			S_{K^2} \in B\left(\mathbb{E}[S_{K^2} ],4\N\right)
			\right)
		\leq 	& \;
		\mathbb{P}\left(
		S_{K^2}-\mathbb{E}[S_{K^2} ]
		\leq \sqrt{\text{Var}[S_{K^2} ]}\frac{4K}{\sqrt{\text{Var}[S_{K^2} ]}}
		\right)\\
		\leq & \;
		5000 \delta^{-3/2}K^{-1}
		+
		\Phi\left(\frac{4K}{\sqrt{\text{Var}[S_{K^2} ]}} \right)\\
		\leq & \;
		5000 \delta^{-3/2}K^{-1}
		+
		\Phi\left(50 \delta^{-1/2} \right)\, ,\\
		\ \\
		1-\mathbb{P}\left(
		S_{K^2} \in B\left(\mathbb{E}[S_{K^2} ],2\N\right)
		\right)
		= 	& \;
		\mathbb{P}\left(
	S_{K^2}-\mathbb{E}[S_{K^2} ]
	\leq \sqrt{\text{Var}[S_{K^2} ]}\frac{-2K}{\sqrt{\text{Var}[S_{K^2} ]}}
		\right)\\
& \;+\mathbb{P}\left(
		S_{K^2}-\mathbb{E}[S_{K^2} ]
		\geq \sqrt{\text{Var}[S_{K^2} ]}\frac{2K}{\sqrt{\text{Var}[S_{K^2} ]}}
		\right)\\
		\leq & \;
		10000 \delta^{-3/2}K^{-1}
		+
		2\Phi\left(\frac{-2K}{\sqrt{\text{Var}[S_{K^2} ]}} \right)\\
		\leq & \;
		10000 \delta^{-3/2}K^{-1}
		+
		2\Phi\left(-2 \right)\, .
		\end{align*}
	Hence for $K>\max\left\{\frac{10000\delta^{-3/2}}{\bar{\Phi}\left(50\delta^{-1/2}\right)},
	\frac{10000\delta^{-3/2}}{\frac{1}{2} - \Phi(-2)}
	 \right\}$, plugging into \eqref{eq:ub-S-mK-K} and then into \eqref{eq:indep_mg}, we obtain
	 \begin{align*}
	 	\mathbb{P}(\textup{some service token is wasted in $[1,K^2]$})
	 	\geq
	 	\min\left\{\frac{1}{2}\bar{\Phi}\left(50\delta^{-1/2}\right),
	 	\frac{1}{2}-\Phi(-2)
	 	\right\}
	 	>0\, .
	 \end{align*}
	\end{compactenum}
Since we obtained a uniform lower bound on the likelihood of wasting service token in both cases, we conclude that the steady state throughput-loss probability is $\Omega(1/K^2)$ as $K\to\infty$.

\emph{Proof of second part.} Consider any $k \in V_B$ such that $\exists j' \in V_S$ such that $(j',k) \in S$. Given a service token type distribution $\bphi \in D(S)$, suppose $U$ achieves asymptotic optimality $\mathbb{P}^{\N,U} = o(1)$, i.e., $1- o(1)$ fraction of service tokens are utilized. This implies that a fraction $\sum_{j' \in V_S: (j',k) \in S} \phi_{j'k} - o(1)$ of service tokens has destination $k$ \emph{and} is utilized under $U$. And that a fraction $\sum_{(j',i) \in S} \phi_{j'i} u_{j'k} - o(1)$ of service tokens are used to serve a job from $k$ \emph{and} are utilized under $U$. (Our proof will focus on the case where $u_{j'k}$ is time invariant and independent of $K$. 
The proof for the general case of time varying $u_{j'k}(t)$ which can depend on $K$ is very similar, though the latter fraction can now vary over time, increasing the notational burden. We omit the details.) But in steady state, the inflow of jobs to node $k$ must be equal to the outflow of jobs, i.e., it must be that  
\begin{align*}
\sum_{j' \in V_S: (j',k) \in S} \phi_{j'k} = \sum_{(j',i) \in S} \phi_{j'i} u_{j'k} \, .
\end{align*}
This is a knife edge requirement. In particular, the set of $\bphi \in D(S)$ which \emph{do not} satisfy this condition is clearly an open and dense subset of $D(S)$. For all such $\bphi$, the above argument implies that $\liminf_{K \to \infty} \mathbb{P}^{\N,U} > 0$, completing the proof. 
\end{proof}

\newpage
 \setcounter{page}{1}
{\LARGE
\begin{center}
Online appendix for ``Large Deviation Optimal Scheduling of Closed Queueing Networks''    
\end{center}
}

\section{Supplementary material to Appendix \ref{appen:subsec:achievability}: proof of Lemma \ref{lem:lower_bound_vj}}\label{online_appen_main_proof}

\begin{proof}[\textbf{Proof of Lemma \ref{lem:lower_bound_vj}.}]
	\noindent \textit{Step 1. Define stopping times and consider the sampling chain.}
	In this step, we mostly follow the approach in \cite{venkataramanan2013queue} (Freidlin-Wentzell theory) and decompose the expression for the likelihood of the Lyapunov function taking on a large value.
	There are minor differences between our proof and proof of Theorem 4 in \cite{venkataramanan2013queue} because of our closed queueing network setting, so we will write down each step for completeness.
	
	Let $\fX^{K,U}_{\bz}(\infty)$ be a random vector distributed as the stationary distribution of recurrent class associated with initial (normalized) state $\bz\in\Omega$. For notation simplicity, we suppress the dependence on $\bz$ and $U$ and keep them fixed. 
	We want to upper bound:
	\begin{equation*}
	\limsup_{\N\to\infty}
	\frac{1}{\N}\log
	\mathbb{P}
	\left(
	L_{\balpha}(\fX^{K}(\infty)) \geq 1
	\right)\, .
	\end{equation*}
	
	Choose positive constants $\delta,\epsilon$ such that  $0<\delta<\epsilon<1$. \yka{$\rho$ doesn't show up until step 2b. Why not define it there?}
	Consider the following stopping times defined on a sample path $\fX^{K}(\cdot)$:
	\begin{align*}
	\beta_1^{K} &\triangleq\
	\inf\{t\geq 0: L_{\balpha}(\fX^{K}(t))\leq \delta\},\\
	\eta_i^{K} &\triangleq\
	\inf\{t\geq \beta_i^{K}:L_{\balpha}(\fX^{K}(t))\geq \epsilon\},\quad i=1,2,\cdots\\
	\beta_i^{K} &\triangleq\
	\inf\{t\geq \eta_{i-1}^{\N,U}:L_{\balpha}(\fX^{K}(t))\leq \delta\},\quad
	i=2,3,\cdots
	\end{align*}
	
	Let the discrete-time Markov chain $\hat{\bX}^{K}[i]$ be obtained by sampling $\fX^{K}(t)$ at the stopping times $\eta_i^{K}$.
	Since $\fX^{K}(\cdot)$ is stationary, there must also exist a stationary distribution for Markov chain $\hat{\bX}^{K}[\cdot]$.
	Let $\Theta^{K}$ denote the state space of the sampled chain $\hat{\bX}^{K}[\cdot]$, \yka{which chain? $\hat{\bX}_{\bz}^{\N,U}[i]$ or $\fX^{K}[t]$?}$\hat{\pi}^{K}$ is the sampled chain's stationary distribution.

The above construction was based on the following idea: first divide time into cycles, where the $i$-th cycle is the interval of time between consecutive $\eta_i$'s, i.e., a cycle is completed each time the value of $L_\alpha(\fX^{K} )$ goes down below $\delta$ and then rises above $\epsilon$.
Then the fraction of time the Lyapunov function spent above $1$ is equal to the ratio
\begin{align*}
	\Ex [\textup{time for which }L_{\balpha}(\fX^{K} ) \geq 1\textup{ during a cycle}]/(\Ex [\textup{length of cycle}])
\end{align*}
in steady state. We sample the initial state as $\fKX(0)=\bx \sim \hat{\pi}^{K}$, hence the first cycle itself characterizes the steady state ratio.
Therefore, the stationary likelihood of event $\{L_{\balpha}(\fX^{K}) \geq 1\}$ can be expressed as (see Lemma 10.1 in \citealt{stolyar2003control}):
	\begin{equation}\label{eq:achievability_sample_chain}
	\mathbb{P}
	\left(
	L_{\balpha}(\fX^{K}) \geq 1
	\right)
	=
	\frac{\int_{\Theta^{K}}\hat{\pi}^{K}(d\bx)
		\cdot
		\mathbb{E}
		\left(
		\int_{0}^{\eta_{1}^{K}}\ind\left\{L_{\balpha}(\fX^{K}(t))\geq 1\right\}dt
		\left|\fX^{K}(0)=\bx\right.
		\right)}
	{\int_{\Theta^{K}}\hat{\pi}^{K}(d\bx)\cdot
		\mathbb{E}(\eta_1^{K}|\fX^{K}(0)=\bx)}\, .
	\end{equation}
	\ \\
	
	\noindent\textit{Step 2. Bounding the RHS of (\ref{eq:achievability_sample_chain})}. To upper bound $\mathbb{P}
	\left(
	L_{\balpha}(\fX^{K}) \geq 1
	\right)
$, we lower bound the denominator in the RHS of (\ref{eq:achievability_sample_chain}) and upper bound the numerator.
	\begin{compactitem}[leftmargin=*]
		\item \textit{Step 2a. Bounding the Denominator.}
To lower bound the denominator, we focus on the discrete-time embedded chain of $\{\fKX(\cdot)\}$. Note each exactly one service token arrives at each jump of the chain, therefore $||\fKX(\cdot)||_{\infty}$ change by at most $\frac{1}{K}$ at each jump.
Using property 2 of $L_{\balpha}(\cdot)$ in Lemma \ref{lem:tech_lems}, we further have that $L_{\balpha}(\fKX(\cdot))$ change by at most $\frac{1}{K\cdot\min_{i}\alpha_i}$ at each jump.
Since the Lyapunov function $L_{\balpha}(\fKX(\cdot))$ has to increase from $\delta$ to $\epsilon$ during $[0,\eta_{1}^K]$, there exists $K_1 = K_1(\epsilon,\delta)>0$ such that for any $K>K_1$, at least $\frac{K\cdot\min_{i}\alpha_i}{2}(\epsilon-\delta)$ jumps occur during $[0,\eta_{1}^K]$.
Because the times between two consecutive jumps follow i.i.d. exponential distribution with rate $K\mathbf{1}^{\T}\hat{\bphi}\mathbf{1}$, therefore for any $K>K_1$,
\begin{align}\label{eq:sample_chain_denom_lb}
	\mathbb{E}(\eta_1^{K}|\fKX(0)=\bx)
	\geq
	\frac{K\cdot\min_{i}\alpha_i}{2}(\epsilon-\delta)\frac{1}{K\mathbf{1}^{\T}\hat{\bphi}\mathbf{1}}
	=
	\frac{\min_{i}\alpha_i}{2\cdot\mathbf{1}^{\T}\hat{\bphi}\mathbf{1}}(\epsilon-\delta)\, .
\end{align}

		\item \textit{Step 2b. Bounding the Numerator.} This part is more complex, and we first decompose the numerator into several terms.
Let $\rho\in(\epsilon,1)$. Because each (normalized) queue length change by at most $\frac{1}{K}$ at each jump almost surely, and that $L_{\balpha}(\cdot)$ is Lipschitz continuous, there exists $K_2 = K_2(\epsilon,\rho)>0$, such that  for all $K\geq K_2$, we have $L(\fX^{K}(\eta_i^{K}))\leq \rho$.
		
		We define another stopping time:
		\begin{equation*}
		\eta^{K,\uparrow}\triangleq
		\inf\{t\geq 0:L_{\balpha}(\fX^{K}(t))\geq 1\}\, .
		\end{equation*}
		
		Then for any $\bx\in\Theta^{K}$, we must have:
		\begin{align*}
		\mathbb{E}
		\left(
		\int_{0}^{\eta_1^{K}}
		\ind \{L_{\balpha}(\fX^{K}(t))\geq 1\} dt
		\left|\fX^{K}(0)=\bx\right.
		\right)
		\leq\ 
		\mathbb{E}
		\left(
		\ind \{\eta^{K,\uparrow}\leq \beta_1^{K}\}
		(\beta_1^{K} - \eta^{K,\uparrow})
		\left|\fX^{K}(0)=\bx\right.
		\right)\, .
		\end{align*}
		
		The above inequality holds because:
		\begin{itemize}
			\item if $\beta_1^{K}\leq\eta^{K,\uparrow}$, then both sides are zero (because the Lyapunov function will hit $\epsilon$ before $1$);
			\item if $\beta_1^{K}>\eta^{K,\uparrow}$, then $L_{\balpha}(\fX^{K}(t))\geq 1$ can occur only for a subset of $t \in [\eta^{K,\uparrow}, \beta_1^{K}]$, and this time interval has length $\beta_1^{K}-\eta^{K,\uparrow}$.
		\end{itemize}
	Hence
		\begin{align*}
		& \;\mathbb{E}
		\left(
		\int_{0}^{\eta_1^{K}}
		\ind \left\{L_{\balpha}(\fX^{K}(t))\geq 1\right\} dt
		\left|\fX^{K}(0)=\bx\right.
		\right)\\
		\leq& \;
		\mathbb{E}
		\left(
		\beta_1^{K} - \eta^{K,\uparrow}\left|
		\eta^{K,\uparrow}\leq \beta_1^{K},
		\fX^{K}(0)=\bx
		\right.
		\right)
		\mathbb{P}
		\left(
		\eta^{K,\uparrow}\leq \beta_1^{K}
		\left|\fX^{K}(0)=\bx\right.
		\right)\, .
		\end{align*}
		
Define
\begin{align*}
  \beta^{K}(\bx) &\triangleq
	\inf\left\{t\geq 0: L_{\balpha}(\fX^{K}(t))\leq \delta \left|\fX^{K}(0)=\bx\right.\right\}\, .
\end{align*}
		Using the properties of Markov chains and conditional expectation, we have:
		\begin{align*}
		\mathbb{E}
		\left(
		\beta_1^{K} - \eta^{K,\uparrow}\left|
		\eta^{K,\uparrow}\leq \beta_1^{K},
		\fX^{K}(0)=\bx\right.
		\right)
		=& \;
		\mathbb{E}\left(
		\mathbb{E}\left(\beta^{K}
		\left(\fX^{K}(\eta^{K,\uparrow})\right)
		\right)\left|
		\eta^{K,\uparrow}\leq \beta_1^{K},
		\fX^{K}(0)=\bx\right.
		\right)\\
\leq&\;
		\sup_{\bx\in\Omega}\mathbb{E}\left(\beta_1^{K}
		\left|\fX^{K}(0)=\bx\right.
		\right)\, .
		\end{align*}
\yka{There is some notation problem above. I believe you want $\mathbb{E}\left(\beta_1^{K}
		\left|\fX^{\N,U}(\eta^{\N,U,\uparrow})\right.
		\right)$ to mean ``expected time it takes for $L_\alpha$ to dip below $\delta$ if you start at $\fX^{\N,U}(\eta^{\N,U,\uparrow})$'' but $\beta_1^{K}$ is already defined for starting at $\bx$, not at $\fX^{\N,U}(\eta^{\N,U,\uparrow})$.}
		Let $T$ be a positive number which will be chosen later.
		Recall that $L_{\balpha}(\bx)\leq \rho$ for all $\bx\in\Theta^{K}$ almost surely when $K\geq K_2$.
		Hence, for any such $\bx\in\Theta^{K}$, we have,
\yka{The numerator of (\ref{eq:achievability_sample_chain}) is an expectation over $\bx$. So the starting quantity  should be $\mathbb{E}
		\left(
		\int_{0}^{\eta_1^{K}}
		\ind \left\{L_{\balpha}(\fX^{\N,U}[t])\geq \alphatwo\right\} dt
		\left|\fX^{\N,U}[0]=\bx\right.
		\right)$ and not ``numerator of (\ref{eq:achievability_sample_chain})'', right?}		
\begin{align}
		& \;\mathbb{E}
		\left(
		\int_{0}^{\eta_1^{K}}
		\ind \left\{L_{\balpha}(\fX^{K}(t))\geq 1\right\} dt
		\left|\fX^{K}(0)=\bx\right.
		\right)\nonumber\\
		\leq& \;
		\mathbb{E}
		\left(
		\beta_1^{K} - \eta^{K,\uparrow}\left|
		\eta^{K,\uparrow}\leq \beta_1^{K},
		\fX^{K}(0)=\bx
		\right.
		\right)
		\mathbb{P}
		\left(
		\eta^{K,\uparrow}\leq \beta_1^{K}
		\left|\fX^{K}(0)=\bx\right.
		\right)\nonumber\\
		\leq& \;
		\left(
		\sup_{\bx\in\Omega}\mathbb{E}\left(\beta_1^{K}
		\left|\fX^{K}(0)=\bx\right.
		\right)
		\right)
		\left[
		\mathbb{P}
		\left(
		\eta^{K,\uparrow}\leq T
		\left|\fX^{K}(0)=\bx\right.
		\right)\right. \nonumber\\
		& \;\left. +
		\mathbb{P}
		\left(
		\beta_1^{K} \geq T
		\left|\fX^{K}(0)=\bx\right.
		\right)
		\right]\qquad \big(\textup{using $\,\eta^{K,\uparrow}\leq \beta_1^{K} \ \Rightarrow \ \eta^{K,\uparrow}\leq T \textup{ or } T \leq \beta_1^{K}$}\,\big)\nonumber\\
		\leq & \;
		\underbrace{\left(\sup_{\bx\in\Omega}\mathbb{E}\left(\beta_1^{K}
			\left|\fX^{K}(0)=\bx\right.
			\right)\right)}_{\text{(a)}}
		\left[
		\underbrace{\sup_{\bx:L_{\balpha}(\bx)\leq \rho}
			\mathbb{P}
			\left(
			\eta^{K,\uparrow}\leq T
			\left|\fX^{K}(0)=\bx\right.
			\right)}_{\text{(b)}}\right.\nonumber\\
		& \;\left. +\,
		\underbrace{\sup_{\bx:L_{\balpha}(\bx)\leq \rho}
			\mathbb{P}
			\left(
			\beta_1^{K} \geq T
			\left|\fX^{K}(0)=\bx\right.
			\right)}_{\text{(c)}}
		\right]\, \label{eq:exp_numerator}.
		\end{align}

		\begin{compactitem}[leftmargin=*]
			\item \textit{Step 2b(i). Bounding term (a).} Term (a) is the upper bound of the expected time for the Lyapunov function to hit a lower level $\delta$ starting from a higher level $\epsilon$. Because the policy $U$ satisfies the negative drift condition, it follows from standard argument (see Part B(1) of the proof of Theorem 4 in \cite{venkataramanan2013queue}, which applies the classical results in \cite{dai1995positive}) that there exists $K_3 = K_3(\delta,\epsilon)$ and constant $C>0$ such that for $K\geq K_3$, we have
			$
			(a)\leq C\, .
			$
			
			\item \textit{Step 2b(ii). Asymptotics for (b).}
			Let $K\to\infty$ and apply Proposition 2 in \cite{venkataramanan2013queue} to $\fKX(\cdot)$. We have:
			\begin{align*}
			&\limsup_{K\to \infty}
			\frac{1}{\N}
			\log\left(
			\sup_{\bx:L_{\balpha}(\bx)\leq \rho}
			\mathbb{P}
			\left(
			\eta^{K,\uparrow}\leq T
			\left|\fKX(0)=\bx\right.
			\right)
			\right)\\
			\leq\;
			-&\inf_{\fA,\fX}\int_{0}^{T}
			\Lambda^*\left(\dot{\fA}(t)\right)dt,
			\textit{ where $(\fA,\fX)$ is an FSP}\\ &\textit{such that $L_{\balpha}(\fX(0))\leq\rho$, $L_{\balpha}(\fX(t))\geq 1$ for some $t\in[0,T]$}\, .
			\end{align*}
			
			\item \textit{Step 2b(iii). Asymptotics for (c).}
Intuitively, term (c) is the tail probability of the duration of a cycle that terminates when the Lyapunov function hit $\delta$.
It remains to be shown that this term is negligible comparing to (b) as $T\to\infty$.
Let $K\to\infty$ and apply Proposition 2 in \cite{venkataramanan2013queue} to $\fKX(\cdot)$. We obtain:
			\begin{align*}
			&\limsup_{K\to \infty}
			\frac{1}{\N}
			\log\left(
			\sup_{\bx:L_{\balpha}(\bx)\leq \rho}
			\mathbb{P}
			\left(
			\beta_1^{K}\geq T
			\left|\fKX(0)=\bx\right.
			\right)
			\right)\\
			\leq\;
			-&\inf_{\fA,\fX}\int_{0}^{T}
			\Lambda^*\left(\dot{\fA}(t)\right)dt,
			\textit{ where $(\fA,\fX)$ is an FSP}\\ &\textit{such that $L_{\balpha}(\fX(0))\leq\rho$, $L_{\balpha}(\fX(t))\geq \delta$ for all $t\in[0,T]$}\, .
			\end{align*}
			
			We focus on the variational problem on the RHS. Note that any FSP that is feasible to the variational problem must satisfy:
			\begin{align*}
				\delta\leq\ L_{\balpha}(\fX(0)) + \int_{t=1}^{T}\dot{L}(\fX(t))dt
				\leq\ \rho + \int_{t=1}^{T}\dot{L}(\fX(t))dt\, .
			\end{align*}
			For any fixed FSP, define $\cT_0 \triangleq \{t\in[0,T]:\dot{L}(\fX(t)) > -\eta  \}$, where $\eta$ is the negative drift parameter in the statement of Proposition \ref{prop:tight_converse}.
			Denote the measure of $\cT_{0}$ by $t_{0}$. Therefore it must hold that:
			\begin{align*}
			\rho + \int_{t=1}^{T} \dot{L}(\fX(t))dt =\ &
			\rho
			+
			\int_{t\notin \cT_{0}}  \dot{L}(\fX(t)) dt
			+
			\int_{t\in \cT_{0}}  \dot{L}(\fX(t)) dt \\
			\leq\ &
			\rho - \eta(T-t_{0}) + \int_{t\in \cT_{0}}
			 \dot{L}(\fX(t)) dt\, .
			\end{align*}
			Hence
			\begin{align*}
				\int_{t\in \cT_{0}} \dot{L}(\fX(t)) dt
				\geq\
				\eta (T - t_{0}) + \delta - \rho
				\geq\
				\eta(T - t_{0}) - 1 \, .
			\end{align*}
			
			There are two cases:
			
			Case 1: When $t_{0} > \frac{T}{2}$. Define
			\begin{align}
				J_{\min} \triangleq \min\quad &\Lambda^*(\dot{\fA}(t)) \label{eq:cost-drift-perturbation} \\
				\textup{subject to}\quad & \dot{L}(\fA(t)) \geq -\eta\, ,\ t\in[0,T]\, ,\ (\fA(t),\fX(t))\textup{ is an FSP.} \nonumber
			\end{align}
			Note that $J_{\textup{min}} \geq \min_{\bof\notin B(\phi,\epsilon')}\Lambda^*(\bof)>0$ and $\epsilon'$ is the $\epsilon$ specified in condition (2) of Proposition \ref{prop:tight_converse}.
			Therefore a lower bound of the exponent of these sample paths is
			\begin{align*}
			\int_{0}^{T}
			\Lambda^*\left(\dot{\fA}(t)\right)dt
			\geq\ 
			\frac{T}{2} J_{\min}\, .
			\end{align*}
			
			Case 2: When $t_{0} \leq \frac{T}{2}$. We have
			\begin{align*}
			\int_{t\in \cT_{0}} \dot{L}(\fX(t)) dt
			\geq\
			\eta(T - t_{0}) - 1
			\geq
			\frac{\eta T}{2} - 1 \, .
			\end{align*}
			We choose $T>\frac{4}{\eta}$, therefore $\frac{\eta T}{2} - 1 \geq \frac{\eta T}{4}$. A lower bound of the exponent of these sample paths is the value of the following variational problem:
			\begin{align*}
				J(T)\triangleq -&\inf_{\fA,\fX}\int_{0}^{T}
				\Lambda^*\left(\dot{\fA}(t)\right)dt,
				\textit{ where $(\fA,\fX)$ is an FSP}\\ &\textit{such that $\int_{0}^{T}\max\{\dot{L}_{\balpha}(\fX(t)),0\} dt \geq \frac{\eta T}{4}$}\, .
			\end{align*}
			We claim that $J(T)\to\infty$ as $T\to\infty$ and prove the claim in step 3.
			
			Combine the two cases, we have:			
			\begin{align*}
			\limsup_{\N\to\infty}\frac{1}{\N}
			\log
			\left(
			\sup_{\bx:L_{\balpha}(\bx)\leq \rho}
			\mathbb{P}
			\left(
			\beta_1^{K} \geq T
			\left|\fKX(0)=\bx\right.
			\right)
			\right)
			\leq
			-
			\min\left\{\frac{T}{2}J_{\min}\, , J(T) \right\} \, .
			\end{align*}
It is not hard to see that as $T\to\infty$, the exponent of term (c) tends to $-\infty$ hence is negligible.
			
\yka{Do you suddenly move from bounding terms for the numerator to bounding the ratio? You haven't even ended two compactitems yet (for (i)-(iii) and for 2a-b).! I have no idea what abruptly happens below, allowing you to bound the pessimistic exponent. Even if you want to use \cite{venkataramanan2013queue}, at least provide some overview of what's going on. Do you take $\delta \rightarrow 0$? What about $\eps, \rho, w$?}
			
\yka{How is it that $w$ was replaced by 1 in \eqref{eq:achievability_multi_variational}, but then reappeared as $w$ in \eqref{eq:variational_problem_original}? How is it that the lower bound on $T$ is $0$. Note how the bound on term $c$ has a positive exponent for $T < 2(\rho-\delta)/\eta$?}
\yka{Need to insert \textbackslash, before punctuation in equations. I inserted several places but this page onwards there are still many pending. Similarly, need to use \textbackslash; after alignment character \&}
\end{compactitem}
\end{compactitem}

Now combine all the terms.
For fixed $\epsilon,\delta,\rho$, note that the denominator of \eqref{eq:achievability_sample_chain} and (a) in \eqref{eq:exp_numerator} are bounded by a constant term, so they have no contribution to the exponent of \eqref{eq:achievability_sample_chain}.
Since as $T\to\infty$, (c) in \eqref{eq:exp_numerator} have an exponent that is at most $-\liminf_{T\to\infty}J(T)$, we have
			\begin{align}
			\limsup_{\N\to\infty}
			\frac{1}{\N}\log \bbp^{K,U}
			\leq\; &
			-\liminf_{T\to\infty}J(T)\, ,\, 
			\limsup_{\N\to\infty}
			\frac{1}{\N}\log
			\left(
			\max_{\fKX(0)\in\Omega}
			\mathbb{P}
			\left(
			L_{\balpha}(\fX^{K}(\infty)) \geq 1
			\right)
			\right)
			\label{eq:achievability_multi_variational}\\
			\leq\; &-\inf_{T>0}
			\inf_{\fA,\fX}\int_{0}^{T}
			\Lambda^*\left(\dot{\fA}(t)\right)dt\nonumber\\
			&\quad\textit{where ($\fA,\fX$) is an FSP such that $L_{\balpha}(\fX(0))=\rho$, $L_{\balpha}(\fX(T))\geq 1$}\, .\label{eq:variational_problem_original}
			\end{align}
Finally, let $\delta,\epsilon, \rho\to 0$, we have
\begin{align*}
			&\limsup_{\N\to\infty}
			\frac{1}{\N}\log \bbp^{K,U}\nonumber\\
			\leq\; &-\inf_{T>0}
			\inf_{\fA,\fX}\int_{0}^{T}
			\Lambda^*\left(\dot{\fA}(t)\right)dt\nonumber\\
			&\quad\textit{where ($\fA,\fX$) is an FSP such that $L_{\balpha}(\fX(0))=0$, $L_{\balpha}(\fX(T))\geq 1$}\, .
			\end{align*}

We briefly summarize Step 2 and provide some intuition.
The goal is to upper bound the stationary likelihood that the Lyapunov function equals $1$.
To study the stationary behavior, we first divide time into cycles, where a cycle is completed each time the Lyapunov function goes down below $\delta$ then rises above $\epsilon$, where $\delta<\epsilon\ll 1$.
Then using a variant of renewal-reward theorem (equation \eqref{eq:achievability_sample_chain}), we only need to lower bound the expected cycle duration, and upper bound the expected time the Lyapunov function stays at $1$ during a cycle.
The Lipschitz property of the Lyapunov function ensures that the cycle duration is bounded away from $0$ hence has no contribution to the \textit{exponent} of the desired likelihood (Lemma \ref{lem:tech_lems}).
Meanwhile, the negative drift condition ensures the expected time until the Lyapunov function returns to $\delta$ after hitting $1$.
This leaves the exponent of the desired likelihood to be solely dependent on the probability that the Lyapunov function ever hit $1$ during a cycle.
Finally we apply the sample path large deviation principle (Fact \ref{fact:sample_path_ldp}) to bound this quantity.

	\noindent\textit{Step 3. Reduce (\ref{eq:achievability_multi_variational}) to an one-dimensional variational problem.}
	This rest of the proof is exactly the same as the proof of Theorem 5 and Proposition 7 in \cite{venkataramanan2013queue}; we provide the intuition and omit the details.
	
	The proof up until this point dealt with the \textit{steady state} of the system.
	Recall the link between the exponent and value of a differential game described in Section \ref{subsec:sufficient_condition_opt}.
	We now lower bound the exponent of the steady state throughput-loss probability by a variational problem (differential game), namely, (\ref{eq:variational_problem_original}).
	Since we are trying to lower bound the adversary's cost, we consider an ``ideal adversary'' who can increase $L_{\balpha}(\bx)$ at the minimum cost at \textit{each} level set.
	Mathematically,
	\begin{align}\label{eq:achievability_one_dimensional}
	\textup{The quantity in }(\ref{eq:variational_problem_original})
	\leq
	-\inf_{T>0}\theta_{T}\, ,
	\end{align}
	\begin{align*}
	\textup{where}\qquad\theta_T \, \triangleq \;&\inf_{L_{\balpha}(\cdot)}
	\int_{0}^{T}l_{\balpha,T}\left(L(t),\dot{L}(t)\right)dt\\
	\textup{s.t. }&L(\cdot)\text{ is absolutely continuous and }
	L(0)=0,\quad L(T)\geq 1\, .\\
	\vspace{3pt}\\
		l_{\balpha,T}(y,v)\triangleq &\inf_{\fA,\fX}\Lambda^*(\bof)\\
	\text{s.t. }&(\fA,\fX)\text{ is an FSP on $[0,T]$ such that for some regular }t\in[0,T]\\\
	&\dot{\fA}(t)=\bof,\quad
	L_{\balpha}(\fX(t))=y,\quad
	\dot{L}_{\balpha}(\fX(t))=v \, .
	\end{align*}
		
	Using the scale-invariance property of $L_{\balpha}(\bx)$ (Lemma \ref{lem:key_property_lyap}), we can show that $l_{\balpha,T}(y,v)$ is independent of $y$ (Proposition 7 in \citealt{venkataramanan2013queue}).
	As a result, the above variational problem reduces to an one-dimensional problem where the ``ideal adversary'' chooses a single rate (i.e., $v$ in the statement of Lemma \ref{lem:lower_bound_vj}) at which $L_{\balpha}(\bx)$ increases.
	This problem is exactly the one in the statement of Lemma \ref{lem:lower_bound_vj}.
	\yka{I like how you have written up step 3 with the intuition. Please do the same at the end of step 2.}
	(We prove the claim in step 2 that $\liminf_{T\to\infty}J(T)=\infty$ here. Using exactly the same argument as in step 3, we can show that $J(T)\geq \frac{\eta T}{4} \gamma_{{\textup{\tiny AB}}}(\balpha)$ where the RHS is defined in \eqref{eq:lower_bound_vj}. This concludes the proof.)

\end{proof}

\end{document}